\title{Positive Lyapunov exponent for random perturbations of predominantly expanding multimodal circle maps}
\author{Alex Blumenthal\thanks{School of Mathematics, Georgia Institute of Technology, Atlanta, Georgia, USA. Email: \url{ablumenthal6@gatech.edu}. This material is based upon work supported by the National Science Foundation under Award No. DMS-1604805.}
\and Yun Yang\thanks{Department of Mathematics, Virginia Tech, Blacksburg, VA 24061, USA. Email: \url{yunyang@vt.edu}.
This material is based upon work supported by the National Science Foundation under Award No. DMS-2000167.}
}
\date{\today}
\theoremstyle{theorem}
\newtheorem{thm}{Theorem}
\newtheorem{cor}[thm]{Corollary}
\newtheorem{lem}[thm]{Lemma}
\newtheorem{prop}[thm]{Proposition}
\newtheorem{thmA}{Theorem}
\theoremstyle{definition}
\newtheorem{defn}[thm]{Definition}
\newtheorem{rmk}[thm]{Remark}
\newtheorem{cla}[thm]{Claim}
\newcommand{\E}{\mathbb{E}}
\newcommand{\N}{\mathbb{N}}
\renewcommand{\P}{\mathbb{P}}
\newcommand{\R}{\mathbb{R}}
\newcommand{\Z}{\mathbb{Z}}
\newcommand{\Qc}{\mathcal Q}
\newcommand{\Bc}{\mathcal{B}}
\newcommand{\Fc}{\mathcal{F}}
\newcommand{\Hc}{\mathcal{H}}
\newcommand{\Rc}{\mathcal{R}}
\newcommand{\Gc}{\mathcal{G}}
\newcommand{\Ac}{\mathcal{A}}
\renewcommand{\Hc}{\mathcal{H}}
\newcommand{\Nc}{\mathcal{N}}
\newcommand{\Ic}{\mathcal I}
\renewcommand{\a}{\alpha}
\renewcommand{\b}{\beta}
\newcommand{\e}{\epsilon}
\newcommand{\tX}{\tilde{X}}
\newcommand{\dist}{\operatorname{dist}}
\newcommand{\T}{\mathbb T}
\newcommand{\Leb}{\operatorname{Leb}}
\newcommand{\Len}{\operatorname{Len}}
\newcommand{\Pc}{\mathcal P}
\newcommand{\Supp}{\operatorname{Supp}}
\newcommand{\modone}{\,\, (\text{mod } 1)}
\newcommand{\uo}{{\underline \omega}}
\begin{document}

\maketitle

\begin{abstract}

We study the effects of IID random perturbations of amplitude $\e > 0$ on the asymptotic dynamics of one-parameter families
$\{f_a : S^1 \to S^1, a \in [0,1]\}$ of smooth multimodal maps which are ``predominantly expanding'', i.e., $|f'_a| \gg 1$ away from small neighborhoods of the critical set $\{ f'_a = 0 \}$. We obtain,
for any $\e > 0$, a \emph{checkable, finite-time} criterion on the parameter $a$ for random perturbations of the 
map $f_a$ to exhibit (i) a unique stationary measure, and (ii) 
a positive Lyapunov exponent comparable to $\int_{S^1} \log |f_a'| \, dx$. 
This stands in contrast with the situation for the deterministic dynamics of $f_a$, the chaotic regimes of which
are determined by typically uncheckable, infinite-time conditions. 
Moreover, our 
finite-time criterion depends on only $k \sim \log (\e^{-1})$ iterates of the deterministic dynamics of $f_a$,
which grows quite slowly as $\e \to 0$.

\end{abstract}

\section{Introduction and statement of results}

A fundamental goal in dynamical systems is to determine the asymptotic behavior of various dynamical systems.
Away from the uniformly expanding, Anosov and Axiom A settings, maps can have ``mixed'' dynamical behavior, e.g., 
hyperbolicity on some parts of phase space and contractive behavior on others. On the collection of maps with this `mixed' 
behavior, various dynamical regimes (e.g., asymptotically stable orbits with large basins of attraction versus
more `chaotic' asymptotic behavior)
 can be intermingled, in the space of maps, in an extremely convoluted way. 

These issues are already present in the deceptively simple example of the one-parameter family of quadratic maps $f_a : [0, 1] \to [0, 1], f_a(x) := a x (1 - x)$ for $a \in [0,4]$. Let us agree to say that for a parameter $a \in [0,4]$, the map $f_a$ is \emph{regular} if phase space $[0,1]$ is covered Lebesgue almost-surely by the basins of periodic sinks, while $f_a$ is \emph{chaotic} if it possesses a unique a.c.i.m. with a positive Lyapunov exponent. 
For the family $\{ f_a\}$, it is known (e.g., \cite{L02} and many others) that the parameter space [0,4] is Lebesgue-almost surely partitioned into two sets, $\mathcal A \cup \mathcal B$, with the following properties: 
\begin{itemize}
    \item For all $a \in \mathcal A$, the map $f_a$ is regular, and for all $a \in \mathcal B$, the map $f_a$ is chaotic.
    \item The set $\mathcal A$ is open and dense in $[0,4]$, while $\mathcal B$ has positive Lebesgue measure. In particular, every $a \in \mathcal B$ is the limit point of a sequence $\{ a_n \} \subset \mathcal A$.
\end{itemize}
In particular, the chaotic property is extremely \emph{structurally unstable} with respect to the parameter $a$: any $a \in \Bc$ is the limit point of a sequence $\{ a_n \} \subset \Ac$.

Aside from `exceptional' cases (e.g., $a = 4$),
it is typically impossible to rigorously determine, even with the help of a computer, 
the dynamical regime corresponding to a \emph{given} parameter $a \in [0,4]$, as this determination 
would require infinite-precision knowledge of infinite-length trajectories. 
For the quadratic family and other families of 1D maps with mixed expansion and contraction,
the core issue is the difficulty in ruling out the formation of \emph{sinks of high period}:
even if, for a given $a$, sinks of period $\leq N$ are ruled out for some extremely large $N$, 
one cannot rule out the existence of a sink of period $N + 1$ or greater. Indeed,
the trajectory of a sink of large period may `look' chaotic before the full period
has elapsed.

 Although fewer results are known for higher-dimensional models, one anticipates a similar degree 
 of convoluted intermingling of dynamical regimes: see, e.g., 
 the class of examples now known as Newhouse phenomena \cite{N79}. 
 A somewhat more complete account of coexistence phenomena is 
 available for the famous Chirikov standard map family \cite{C79}, 
 a one-parameter family $\{ F_L, L > 0 \}$ of volume-preserving maps on the torus $\T^2$ exhibiting 
 simultaneously both strong hyperbolicity and elliptic-type behavior on phase space. 
 As the parameter $L$ increases, so too does the proportion of phase space on 
 which $F_L$ is hyperbolic, as well as the ``strength'' of this hyperbolicity.
However, even for large $L$, a small amount of 
 elliptic-type behavior is intermingled with hyperbolic behavior in the parameter space.
 Indeed, for a residual set of large $L$, it is known 
 that elliptic islands for $F_L$ are approximately $L^{-1}$-dense in $\T^2$ (Duarte 1994 \cite{D2}; see also \cite{D1}) , 
 while the set of points with a positive Lyapunov exponent has Hausdorff 
 dimension 2 and is approximately $L^{- 1/3}$-dense in $\T^2$ (Gorodetski 2012 \cite{G12}).
 To the authors' knowledge, it is still not known whether $F_L$ has 
 positive metric entropy (equivalently, a positive Lyapunov exponent on a positive-volume 
 set) for any fixed value of $L$.
  
A similar situation exists for the H\'enon family of diffeomorphisms $f_{a,b}(x,y) :=(1 - a x^2 + y, b x)$ for real parameters $a, b$, introduced
by H\'enon \cite{henon1976two} as a toy model capturing the dynamics of Poincar\'e sections of the Lorenz
model \cite{lorenz1967nature} in certain parameter ranges. Note that the singular limit $b \to 0$ corresponds with the quadratic map family. Of particular interest are the ``classical parameters'' $a = 1.4, b = .3$ at which a wealth of numerical evidence suggests $f_{a,b}$ admits a chaotic strange attractor (see, e.g., the original work \cite{henon1976two}). This remains a major open problem and is likely to be quite difficult: see, e.g., \cite{galias2014structure, galias2015henon} which establishes the existence of parameters close to $(a,b) = (1.4, .3)$ at which the attractor degenerates into periodic sinks $f_{a,b}$ admits sinks. Another known difficulty is the mechanism of 
unfurling of homoclinic tangencies \cite{newhouse1974diffeomorphisms}; for the H\'enon map specifically, see for example \cite{benedicks2018coexistence} and the references therein. 
At present, the existence of strange attractors for $f_{a,b}$ is only known for perturbatively small
values of $b$ \cite{BC2, mora1993abundance}. This work has since been substantially generalized to a framework for establishing existence of \emph{rank one} strange attractors in the work of Wang, Young and others in a variety of contexts, e.g., near limit cycles subjected to 
time-periodic forcing with long period \cite{wang2008toward, WO,OS, LWY}. 
We emphasize that these constructions are quite challenging, and do not explicitly identify
parameters at which the strange attractors exist; instead, a parameterized family of maps is considered, and a nonempty set of parameters (a positive-volume Cantor set) is identified at which a strange attractor exists. 

\subsubsection*{Random perturbations}

The real world is inherently noisy, and so it is natural to consider IID random perturbations of otherwise deterministic dynamics and seek to understand the corresponding asymptotic behavior. For concreteness, let us consider a smooth, deterministic map $f : S^1 \to S^1$ and assume that $|f'| > 2$ on all but a small neighborhood of the critical set $\{ f' = 0\}$ for $f$.

Parametrizing $S^1 \cong [0,1)$ and doing arithmetic ``modulo 1'', at time $n$ we perturb $f$ to the map $f_{\omega_{n-1}}(x) = f(x + \omega_{n-1})$, where  $\omega_0, \omega_1, \cdots$ are IID random variables uniformly distributed in $[- \epsilon, \epsilon]$. Here, the \emph{noise amplitude} $\epsilon > 0$ is a fixed parameter. We will consider the asymptotic dynamics of compositions of the form \[f^n_{\uo} = f_{\omega_{n-1}} \circ \cdots \circ f_{\omega_0}\] given a sample $\uo = (\omega_0, \omega_1, \cdots)$. 

\medskip

When $\e \geq 1$, random trajectories $X_n = f^n_\uo(X_0), n \geq 1$ are essentially IID themselves; 
	in this situation it is a straightforward exercise to check (i) uniqueness of the
	stationary measure for the process $(X_n)$ on $S^1$ and (ii) that the Lyapunov exponent $\lambda = \lim_{n \to \infty} 
	\frac1n \log |(f^n_\uo)'(x)|$ exists and is constant for every $x \in S^1$ and a.e. sample $\uo$.
What is more subtle is the situation when $\e \ll 1$, in which case the composition $f^n_\uo$ may
	develop one or more \emph{random sinks}; here, for our purposes, a random sink is a 
	stationary measure for $(X_n)$ with a negative Lyapunov exponent.

	 Random sinks can develop if, for instance, the map $f$ itself has a periodic sink $z \in S^1$. 
	Indeed, it is not hard to check that the sink $z$ persists in the form of a random sink 
	for all $\e > 0$ sufficiently small (see, e.g., 
	Section 3.1 of this paper for a worked example).
	 On the other hand, one anticipates that sinks of $f$ of high period $N$ can be ``destroyed''
		in the presence of a small but sufficient amount of noise, i.e., 
		when $\e \geq \e_N$, where $\e_N \to 0$ as $N \to \infty$.
		As described previously, these high-period sinks are precisely those 
		responsible for the convoluted intermingling of dynamical regimes in one-parameter families of unimodal
		or multimodal maps.

	 In an alternative perspective: given a fixed noise amplitude $\e > 0$, the only
		sinks of $f$ which could possibly persist as random sinks for $(f^n_\uo)$ are
		those of period $\leq k_\e := \max\{ N : \e < \e_N\}$. 
		A crucial point here is that, for a given map $f$, 
		it is virtually always possible to check 
		for sinks of period less than some given value.
		 For these reasons, one anticipates that for a reasonably large class of $f$ as above and a given noise
	amplitude $\e > 0$, it should be possible to determine the asymptotic chaotic regime of the corresponding
	random composition $f^n_\uo$ based on \emph{checkable criteria} involving
	only \emph{finitely many} iterates of the map $f$.
 	
	\medskip
	
	 The present paper is a step in this direction for a model of 
	one-parameter families of multimodal circle maps $f = f_a$ exhibiting strong expansion ($|f_a'| \gg 1$) away from
	a small neighborhood of the critical set $\{ f_a' = 0 \}$.
	We obtain a checkable sufficient criterion on the parameter $a$, involving only finitely
	many iterates of the map $f_a$ (in particular, precluding sinks of low period, as above),
	for deducing asymptotic chaotic behavior for the random composition $f^n_\uo$ 
	when the noise parameter $\e$ is not too small. An appealing feature of these 
	results is that, given $\e > 0$, the criterion involves only approximately $\log(\e^{-1})$ iterates,
	which grows quite slowly as $\e \to 0$.

\subsection{Statement of results}

\subsubsection*{The model}
Let $S^1=\mathbb{R}/ \mathbb{Z}$ be the unit circle, parametrized by the interval $[0,1)$.  We assume throughout that $\psi:S^1\rightarrow \mathbb{R}$ is a $C^2$ function for which the following conditions hold:
\begin{enumerate}
\item[(H1)] the \emph{critical set} $C'_{\psi}=\{\hat{x}\in S^1:\psi'(\hat{x})=0\}$ has finite cardinality, and
\item[(H2)] we have $\{ \psi'' = 0 \} \cap C_\psi' = \emptyset$.
\end{enumerate}

We consider maps of the form
\[
f = f_{L, a} := L \psi + a  \modone \, ,
\]
for $L > 0, a \in [0,1)$, where $\modone : \R \to S^1 \cong \R / \Z$ is the natural projection. Observe that for $L \gg 1$, the map $f$ is strongly expanding away from $C_\psi'$. 

When $\e > 0$ is specified, we write 
$\Omega = \Omega^\e = \big( [- \e, \e]\big) ^{\Z_{\geq 0}}$ for the sample space
for our perturbations. Elements $\uo \in \Omega$ are written
$\uo = (\omega_0, \omega_1, \omega_2, \cdots)$ where $\omega_i \in [- \e, \e], i \geq 0$.
With $\nu^\e$ denoting the uniform distribution on $[-\e, \e]$, we define $\P = \P^\e = (\nu^\e)^{\otimes \Z_{\geq 0}}$
on $\Omega$. We write $\Fc$ for the product $\sigma$-algebra on $\Omega$ and for $n \geq 0$ we write $\Fc_n = \sigma(\omega_0, \omega_1, \cdots, \omega_n) \subset \Fc$.

\medskip

When $f = f_{L, a}$ is specified, we consider random maps of the form 
$f_\omega : S^1 \to S^1, f_\omega(x) := f(x + \omega)$, where it is understood implicitly that the
argument for $f$ is taken $\modone$.
Given a sample $\uo \in \Omega$, we have a corresponding random composition
\[
f^n_\uo := f_{\omega_{n-1}} \circ \cdots \circ f_{\omega_1} \circ f_{\omega_0} 
\]
for $n \geq 1$.

Alternatively, we can view the random maps $f^n_\uo$ as giving rise to a {\it Markov chain} $(X_n)_n$ on $S^1$ defined,
for fixed initial $X_0 \in S^1$, by $X_{n +1} := f_{\omega_n}(X_n)$. The corresponding
Markov transition kernel $P(\cdot, \cdot)$ is defined for $x \in S^1$ and Borel $B \subset S^1$
by
\[
P(x, B) := \P(X_1 \in B | X_0 = x) = \nu^\e \{ \omega \in [- \e, \e] : f_\omega(x) \in B \} \, .
\]
We say that a Borel measure $\mu$ on $S^1$ is \emph{stationary} if 
\[
\mu(B) = \int_{S^1} P(x, B) \, d \mu(x)
\]
for all Borel $B \subset S^1$.

\bigskip

\subsubsection*{Results}

Our results concern the following \emph{checkable, finite-time} criterion $(H3)_{c, k}$ on the dynamics of $f$. 
For now, $c > 0$ and $k \in \N$ are arbitrary.
\begin{align}
(H3)_{c, k}  \quad \quad \text{ For every } \hat x \in C_\psi' \, , \text{ we have } \quad d(f^l(\hat x) , C_\psi') \geq c \quad \text{ for all } 1 \leq l \leq k \, .
\end{align}

We now state our results. 

\begin{thmA}\label{thm:ergod}
Let $\beta , c \in  (0,1)$. Let $L > 0$ be sufficiently large, depending on these constants, and assume $f = f_{L, a}$ satisfies $(H3)_{c, k}$ for some arbitrary $k \in \N$. Finally, assume $\e \geq L^{- (2 k + 1)(1-\beta)}$. Then, the random composition $f^n_\uo$ admits a unique (hence ergodic) stationary measure $\mu$ supported on all of $S^1$.
\end{thmA}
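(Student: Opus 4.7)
The plan is to reduce uniqueness and full support of the stationary measure to a \emph{topological irreducibility} property of the random dynamics: I will show that for every $x\in S^1$ there exists $N=N(L,\e,k)$ such that the deterministic reachable set
\[
R_N(x)\;:=\;\bigl\{f^N_{\uo}(x):\uo\in[-\e,\e]^N\bigr\}\;=\;S^1.
\]
Once this is established, the support of any ergodic stationary measure is a closed $P$-invariant subset of $S^1$ containing $R_N$ from each of its points, and hence equals $S^1$. In particular, $S^1$ is the unique minimal invariant set, and since the chain is Feller (by smoothness of $f$), standard Markov chain theory on a compact metric space gives a unique ergodic --- hence unique --- stationary measure $\mu$ with $\Supp\mu=S^1$.

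Because $\uo\mapsto f^n_{\uo}(x)$ is continuous on the connected set $[-\e,\e]^n$, each $R_n(x)$ is an arc of $S^1$ (or all of $S^1$), and the recursion reads $R_{n+1}(x)=f\bigl(R_n(x)+[-\e,\e]\bigr)$. Writing $\ell_n:=|R_n(x)|$, the key step is to show that $\ell_n$ grows rapidly until it exceeds the maximum gap $\rho_{\max}$ between consecutive points of $C'_\psi$; once this happens, $R_n(x)+[-\e,\e]$ contains a full monotonicity component of $f$ whose image, for $L$ large, covers $S^1$, so $R_{n+1}(x)=S^1$. Two growth mechanisms arise. \emph{Safe expansion}: if $R_n(x)+[-\e,\e]\subset\{d(\cdot,C'_\psi)\geq c/2\}$, then (H1)--(H2) give $|f'|\gtrsim Lc$ and so $\ell_{n+1}\gtrsim Lc(\ell_n+\e)$. \emph{Quadratic fold}: if the set sits within $O(c)$ of some $\hat x\in C'_\psi$, then (H2) gives $f(y)\approx f(\hat x)+\tfrac12 L\psi''(\hat x)(y-\hat x)^2$, yielding $\ell_{n+1}\gtrsim cL(\ell_n+\e)^2$ while confining $R_{n+1}(x)$ to a neighborhood of $f(\hat x)$.

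Assembling the iteration: if $d(x,C'_\psi)\geq c/2$, safe expansion at step $1$ gives $\ell_1\gtrsim Lc\e$; otherwise $x$ lies near some $\hat x\in C'_\psi$, a quadratic fold gives $\ell_1\gtrsim cL\e^2$, and crucially $R_1(x)$ shadows $f(\hat x)$. The hypothesis $(H3)_{c,k}$ then drives an induction: for $1\leq n\leq k$, as long as $\ell_n\ll c$, the arc $R_n(x)$ remains near $f^n(\hat x)$, which (H3) places at distance $\geq c$ from $C'_\psi$, so safe expansion applies at the next step and $\ell_n$ gains a factor $\sim Lc$ per iteration. The quantitative hypothesis $\e\geq L^{-(2k+1)(1-\b)}$ is calibrated so that iterating this growth --- with the initial fold costing an extra power of $\e$ --- reaches $\ell_n\gtrsim\rho_{\max}$ within a number of steps commensurate with the protection afforded by (H3), once $L$ is chosen large depending on $\b$ and $c$.

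The main obstacle is ensuring that the small-arc, safe-expansion regime persists for enough iterations to reach the large-arc regime where folding becomes helpful rather than harmful. One must absorb the linear-near-critical nature of $|f'|$, the factors of $c$, and the implicit constants from (H1)--(H2) into the large-$L$ assumption, and handle the possible re-approach of the shadowed critical orbit to $C'_\psi$ after the $k$-step window of (H3). Managing this transition and bookkeeping the constants is where the precise exponent $(2k+1)(1-\b)$ is balanced, and constitutes the bulk of the technical work.
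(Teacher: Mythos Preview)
Your overall strategy—showing the reachable set $R_N(x)$ is all of $S^1$ and deducing uniqueness from overlapping supports of ergodic measures—is correct and matches the paper's reduction. The interval-growth mechanism you describe (expansion away from $C'_\psi$, quadratic behavior near it, shadowing of postcritical orbits via $(H3)_{c,k}$) is also the right engine. However, the proposal has a genuine gap at precisely the point you flag as ``the bulk of the technical work,'' and the sketch you give does not indicate a mechanism that would close it.

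The problem is the behavior of the growing arc \emph{after} the $k$-step window of protection expires. Your two-regime dichotomy (safe expansion vs.\ quadratic fold) is too coarse: it omits the case where $R_n+[-\e,\e]$ lies close to a critical point $\hat x$ but does not contain it. There $|f'|\sim L\cdot d(R_n,\hat x)$ can be arbitrarily small, so the arc can \emph{contract}, and no fold estimate applies. Even when a genuine fold occurs, the bound $\ell_{n+1}\gtrsim L(\ell_n+\e)^2$ is a contraction whenever $\ell_n+\e\ll L^{-1}$; after one fold and $k$ safe steps you have $\ell_{k+1}\sim L^{k+1}\e^2$, which for $\e$ near the threshold is of order $L^{-(3k+1)(1-\b)}\ll L^{-1}$, so a second fold at step $k+2$ would shrink the arc further. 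Nothing in $(H3)_{c,k}$ prevents $f^{k+1}(\hat x)$ from lying in (or arbitrarily close to) $C'_\psi$, so this scenario is live. The argument as written can stall or shrink indefinitely.

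The paper resolves this by refining the single ``fold'' case into a hierarchy of layers $\Bc=\bigcup_{l=1}^k\Bc^l$, where $\Bc^l$ corresponds to distance $\sim L^{-l/2}$ from $C'_\psi$. A visit to $\Bc^l$ with $l<k$ triggers a bound period of length $l$ (not $k$), after which the net effect is growth by $\gtrsim L^{(l+1)(1/2-\b)}$; this turns \emph{every} close approach, not just the first one, into a source of expansion. Visits to the deepest layer $\Bc^k$ are handled separately: the paper shows first that every trajectory reaches $\Bc^k$ with positive probability (Proposition~\ref{prop:allToBk}), and then that from $\Bc^k$ the interval process, reseeded with a fresh $[-\e,\e]$, grows to full size in a bounded number of steps (Proposition~\ref{prop:bkToSpread}), including a lemma showing that even repeated landings in $\Bc^k$ yield strict growth once $|J|\geq L^{-(k+1)(1-\b)+\gamma}$. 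The paper also replaces your whole-arc process by an interval process that \emph{cuts} at each step according to a partition $\Rc$ and follows the longest piece; this keeps each piece inside a single layer so that the correct bound-period length can be read off. Your uncut arc can straddle several layers, which is why the crude dichotomy fails. To make your approach work you would need to introduce some equivalent of this graded bound-period structure; the present proposal does not supply it.
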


\begin{thmA}\label{thm:lyapEst}
Let $\beta, c \in (0,1)$. Let $L > 0$ be sufficiently large, depending on these constants, and assume $f = f_{L, a}$ satisfies $(H3)_{c, k}$ for some arbitrary $k \in \N$. Finally, assume $\e \geq L^{-(2k + 1)(1-\beta)+\a}$ where $\a \geq 0$ is arbitrary. Then, the Lyapunov exponent 
\[
\lambda = \lim_n \frac1n \log |(f^n_\uo)' (x)| 
\]
exists and is constant over $x \in S^1$ and $\P$-almost every $\uo \in \Omega$, and satisfies the estimate
\[
\lambda \geq \lambda_0 \log L \, ,
\]
where $\lambda_0 = \lambda_0(\a, k) := \min\{\frac{\a}{k+1}, \frac{1}{10}\}$.
\end{thmA}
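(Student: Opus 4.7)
Since Theorem~\ref{thm:ergod} gives a unique stationary measure $\mu$, the Markov chain is ergodic. Combined with the fact that the averaged measure $\tilde\mu := \nu^\e * \mu$ is absolutely continuous with Lebesgue density $\leq 1/(2\e)$, standard arguments (the Kingman--Birkhoff theorem applied to the additive cocycle $\log|(f^n_\uo)'|$, together with the smoothing of the one-step kernel to upgrade $\mu$-a.e.\ convergence to every $x$) yield the existence of $\lambda$ for every $x \in S^1$ and $\P$-a.e.\ $\uo$, with
\[
\lambda \;=\; \int_{S^1} \log|f'(y)|\, d\tilde\mu(y).
\]
Writing $|f'(y)| = L|\psi'(y)|$ and using (H2), which implies $|\psi'(y)| \gtrsim d(y,C_\psi')$ uniformly, the layer-cake formula gives
\[
\lambda \;\geq\; \log L \;-\; \int_0^1 \frac{\tilde\mu(B_r(C_\psi'))}{r}\, dr \;-\; O(1),
\]
which reduces the proof to an upper bound on how much mass $\tilde\mu$ places near the critical set.

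The elementary bounds $\tilde\mu(B_r(C_\psi')) \leq \min(1, |C_\psi'|r/\e)$ alone produce a layer-cake integral of size $\sim \log(1/\e) \asymp (2k+1)(1-\b)\log L$, far exceeding what is allowed. The essential use of hypothesis $(H3)_{c,k}$ is to refine this estimate for $r$ in the intermediate range: since the $k$-step orbit of each critical point remains $c$-away from $C_\psi'$, the map $f^k$ has derivative $\gtrsim (cL)^k$ on a punctured neighborhood of each critical point. Iterating the stationary identity $k+1$ times and tracking how noise of amplitude $\e$ smooths the distribution at each step should yield a power-type bound
\[
\tilde\mu(B_r(C_\psi')) \;\leq\; C\, r^{\gamma}
\]
on a suitable range of $r$, for some exponent $\gamma = \gamma(\a,\b,k) > 0$. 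Optimizing the crossover between this bound and the density bound in the layer-cake integral then produces the coefficient $\lambda_0 = \min\{\a/(k+1),\, 1/10\}$: the factor $1/(k+1)$ reflects the number of iterates used to resolve critical-point mass via the Misiurewicz-type expansion, while the cap $1/10$ comes from $\psi$-independent constants in the quadratic-geometry estimates, which limit the efficiency of the scheme regardless of how large $\a$ is.

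The principal technical obstacle is establishing the refined mass bound above. Quantifying how noise of amplitude $\e$ applied at each of $k+1$ steps interacts with the quadratic collapse of $f$ near $C_\psi'$---a radius-$\e$ neighborhood of a critical point is mapped to an interval of length only $\sim L\e^2$, not $\sim L\e$---requires a delicate iterative argument. The goal is to show that over $k+1$ iterates, the combined effect of the deterministic expansion $(cL)^k$ along critical orbits and the repeated noise smoothing pushes $\tilde\mu(B_r(C_\psi'))$ significantly below $r/\e$ throughout the range of $r$ that controls the layer-cake integral, in such a way that the total loss relative to $\log L$ is at most $(1-\lambda_0)\log L$.
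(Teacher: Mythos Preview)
Your plan has a genuine gap at its load-bearing step. The formula $\lambda = \log L + \int \log|\psi'|\, d\tilde\mu$ is correct, and the layer-cake reduction to bounding $\tilde\mu(B_r(C_\psi'))$ is fine. But the refined mass bound $\tilde\mu(B_r(C_\psi')) \leq C r^\gamma$ is asserted, not proved, and ``iterating the stationary identity $k+1$ times'' does not by itself produce it: the stationary identity relates $\mu$ to $\mu$, so iterating gives no new information without an a priori input to bootstrap from. You would need an inequality of the form $\tilde\mu(B_r) \leq \Phi(\tilde\mu(B_{r'}))$ for some larger scale $r'$, together with a starting estimate; neither is supplied. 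There is also a slip in the heuristic: $(f^k)'$ is \emph{not} $\gtrsim (cL)^k$ on a punctured neighborhood of a critical point, since the first factor $f'(x) \sim L\,d(x,\hat x)$ kills one power; you get $(f^k)'(x) \sim (cL)^{k-1} \cdot L\, d(x,\hat x)$. More seriously, the stationary measure $\mu$ carries no memory of the bound-period structure (visit to $\Bc^k$ followed by $k$ guaranteed steps in $\Gc$), which is precisely the mechanism that compensates for the derivative loss---so it is unclear how a purely measure-theoretic argument on $\tilde\mu$ would recover the balance $k(1-\b)\log L - \gamma\log L \geq \lambda_0(k+1)\log L$ that produces the constant $\lambda_0$.

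The paper takes a completely different, trajectory-wise route. It fixes $x$ and bounds $\liminf_n \frac{1}{n}\E\log|(f^n_\uo)'(x)|$ directly. The sum $\sum_{i=0}^{n-1} T_i$ with $T_i = \log|f'_{\omega_i}(X_i)|$ is split at stopping times $\tau_1 < \tau_2 < \cdots$ marking near-visits of the process to $\Bc^k$. The dangerous terms $T_{\tau_j}$ are controlled not via the stationary measure but via a hand-built filtration $(\Hc_n)$ (Section~5) that at each moment ``averages'' in exactly one well-chosen $\omega_i$; distortion estimates from Section~4 then show that the conditional law of $X_{\tau_j}$ given $\Hc_{\tau_j}$ is nearly uniform on an interval $I_{\tau_j}$ whose length is bounded below (Lemma~\ref{lem:boundIm}) by $\min\{L^{-1-(\frac12+\b)k-\b},\, L^{k(1-\b)}\e\}$. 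This yields $\E(T_{\tau_j}\mid \Hc_{\tau_j}) \geq -\gamma\log L$ with an explicit $\gamma$, and the subsequent $k$ guaranteed steps in $\Gc$ repay $k(1-\b)\log L$; telescoping gives the rate $\lambda_0$. The point is that the compensation between loss at $\tau_j$ and gain on $[\tau_j+1,\tau_j+k]$ is visible only along trajectories, not in the one-point marginal $\tilde\mu$.
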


Theorems \ref{thm:ergod}, \ref{thm:lyapEst} are approximately sharp, 
in the sense that $(H3)_{c, k}$ is compatible with the formation of 
sinks of period $k + 1$, while such sinks persist under random perturbations $\e \leq C L^{- (2 k + 1)}$ where $C > 0$ is a constant. 
See Proposition \ref{prop:optimal} in Section 3.1 for more information.

A satisfying feature of our results is that, for fixed sufficiently large
$L$ and any given $\e > 0$, to deduce
a large positive exponent for $f = f_{L, a}$ requires validating
condition $(H3)_{c, k}$ with $k = k_\e \approx \log (\e^{-1})$. 
The value of $k_\e$ grows only logarithmically with $\e^{-1}$,
which means that even for quite small $\e > 0$,
Theorems \ref{thm:ergod}, \ref{thm:lyapEst} are
already valid when $(H3)_{c, k}$ is verified for a relatively small value of $k$.

%




\subsubsection*{Prior work}

There is a substantial and growing literature on random dynamical systems in low dimensions: we recall below
some of the literature on random dynamical systems
 closest to the present paper, i.e., dealing with random 
 maps having strong expansion mixed with some contraction in phase space.

Lian and Stenlund \cite{LS} consider random perturbations of {\it predominantly expanding} (expanding on most of phase space with a small exceptional set) multimodal maps, more-or-less equivalent to the model in the present paper. They prove that for large enough noise amplitudes, the random system has a unique ergodic stationary measure and a positive Lyapunov exponent. They develop a similar result with smaller noise amplitude assuming a `one time-step' condition on the dynamics, essentially equivalent to $(H3)_{c, 1}$ in our paper. Because we deal with higher-iterate dynamical assumptions, the perturbations we may consider are substantially smaller than those in \cite{LS}.

Stenlund and Sulku \cite{SS} obtain exponential loss of memory for IID compositions $T^n = T_n \circ \cdots \circ T_1$ of random circle maps which are ``expanding on average'': contractive behavior ($\inf |T'| \approx 0$) can appear with positive probability, but the random variable $\inf |T'|$ satisfies a moment condition. The random maps we consider in the present paper \emph{always} have critical points, and so do not satisfy the conditions of \cite{SS}.

In a joint work between the first author, Xue and Young \cite{BXY1, BXY2, blumenthal2020lyapunov}, random perturbations of a model of ``predominantly hyperbolic'' two-dimensional maps are considered. The paper \cite{BXY1} considers a volume-preserving model encompassing the Chirikov standard map, and \cite{BXY2} considers a dissipative (volume-compressing) model of maps having qualitative similarities to the H\'enon maps, while the more recent \cite{blumenthal2020lyapunov} considers systems consisting of arbitrarily many coupled volume-preserving maps. Chaotic properties of the deterministic dynamics in each case are anticipated to hold on large subsets of parameter space, but rigorous verification is largely beyond the scope of current studies. What \cite{BXY1, BXY2, blumenthal2020lyapunov} show is that sufficiently large random perturbations have the effect of ``unlocking'' the hyperbolicity of these systems (positive Lyapunov exponent proportional to the Lebesgue average $\int \log \| dF_x\| \, d x$, estimate of decay of correlations). 
A different but related analysis is carried out in the paper of Ledrappier, Sim\'o, Shub and Wilkinson \cite{LSSW}, which considers IID perturbations applied to a twist map on the sphere. 

Additionally, \cite{BXY1, BXY2} allow smaller random perturbations on assuming a checkable condition involving the first several iterates of the deterministic map, consistent with the finite-time checkable criterion given in the present paper.

To reiterate, the papers \cite{LS, SS, LSSW, BXY1, BXY2, blumenthal2020lyapunov} are emphasized because they deal with random perturbations of maps for which very little is assumed: in these studies, the randomness itself is \emph{leveraged} in a crucial way to `shake loose' hyperbolicity. 
Other works examine random compositions of maps with known `good' asymptotic behavior: by way of example, we mention works on smooth \cite{WSY, BY} and piecewise \cite{Buzzi} expanding maps, maps with a neutral fixed point \cite{AHNV}. 
This also includes works on the problem of stochastic stability: under what conditions
do properties of a given deterministic system persist under small random perturbations? 
There are many works in this important direction, for example, work on 
small random perturbations of Axiom A systems \cite{Ki88, Y86}, unimodal maps
under a (noncheckable) infinite-time condition \cite{BBV, KK, BY1}, and stochastic
stability for H\'enon attractors \cite{BV}. 
We also acknowledge the related problem of \emph{statistical stability}, e.g., how long-time statistics of a dynamical system change within a parametrized family: see, e.g., the 
review \cite{R09}.

The study of deterministic one-dimensional maps with critical points (unimodal or multimodal) has a long history,
a small part of which we recall here.
Naturally we inherit and use some of the ideas developed in this literature. Indeed, our criterion $(H3)_{c, k}$
 is a checkable, finite-time version of various criteria on postcritical orbits of unimodal and multimodal maps
as used by, e.g., Misiurewicz \cite{M}, Jakobson \cite{J}, Collet-Eckmann \cite{CE} and Benedicks and Carleson \cite{BC1}. We note as well the more expository account by Wang and Young \cite{WY}, which we found remarkably helpful in preparing this work.
There are also by now several works attempting to quantify the set of parameters for the quadratic map family at which various dynamical regimes are observed \cite{tucker2009rigorous, luzzatto2006computable, galias2017systematic, golmakani2020rigorous}. 
Also related to our finite-time checkable criteria are frameworks attempting 
to understand dynamical properties at ``finite resolution'' \cite{luzzatto2011finite, elshaarawy2013efficient} or along finite, bounded timescales \cite{blumenthal2020diffusion}.

\subsubsection*{Potential future directions}

A natural future direction is to study small random perturbations of 
the H\'enon map and related models, 
in the hope that one can derive checkable finite-time conditions for 
a positive Lyapunov exponent for the corresponding stationary measure.\footnote{For random systems with absolutely continuous stationary measures, a positivity of Lyapunov exponent implies
existence of ``random strange attractors'' analogous to those for deterministic systems
\cite{ledrappier1988entropy, blumenthal2019equivalence}.} This has been carried out for the standard map and a family of dissipative mappings with ``H\'enon flavor'' using finite-time conditions amounting to three steps of the deterministic dynamics in the previous work
\cite{BXY1, BXY2}; the goal of future work would be to go beyond this and 
derive a succession of stronger finite-time conditions allowing for smaller noise amplitudes. 
 On the other hand, studying Lyapunov exponents for models of this kind this is likely to entail several fundamental challenges not addressed in the present manuscript, e.g., coping with the fact that one must now track
tangent directions as well as the location in phase space.

\subsubsection*{Organization of the paper.}

In Section 2, we derive elementary properties of our model used throughout the paper, 
especially the notion of \emph{bound period} defined in Section 2.2.
In Section 3.1, we discuss the possible formation of sinks of period $k + 1$
 under the condition $(H3)_{c, k}$, verifying the relative sharpness of Theorems \ref{thm:ergod}, \ref{thm:lyapEst}; 
  ergodicity as Theorem \ref{thm:ergod} is then proved in Section 3.2.
The material in Section 3 depends on Section 2 but is otherwise logically
 isolated from the rest of the manuscript.
 The proof of Theorem \ref{thm:lyapEst} occupies the remainder 
 of the paper, Sections 4--6. 
  

\subsubsection*{Notation}

\begin{itemize}
	\item Throughout, we parametrize $S^1$ by the half-open interval $[0,1) \cong \R / \Z$. For $s \in \R$, we write $s \modone$ for the projection of $s$ to $[0,1) \cong \R / \Z$ modulo 1.
	\item We define the \emph{lift} $\tilde f : S^1 \to \R$ of $f$ by $\tilde f(x) = L \psi(x) + a$ (i.e., without projecting $\modone$ to $S^1$). We regard $\tilde f$ as a map $\R \to \R$ by extending the domain periodically to all of $\R$. We write $\tilde f_\omega(x) = \tilde f(x + \omega)$. We define the corresponding Markov process $(\tilde X_n)_n$ on $\R$ by setting $\tilde X_{n + 1} = \tilde f_{\omega_n}(\tilde X_n)$.
	\item We write $d( \cdot, \cdot)$ for the metric induced on $S^1$ via the identification with $\R/ \Z \cong [0,1)$. Note that in our parameterization, 
	we have the identity $d(x,y) = \min \{ |x - y|, |x - y \pm 1|\}$. For a set $A \subset S^1$, we write $\Nc_\e(A)$ for the $\e$-neighborhood of $A$ in the metric $d$.
	\item For a point $x \in S^1$ and a set $A \subset S^1$, we define the minimal distance $d(x,A) = \inf_{a \in A} d(x, a)$. For sets $A, B \subset S^1$, we define $d(A, B) = \inf_{a \in A} d(a, B) = \inf_{a \in A, b \in B} d(a, b)$.
	\item Given a set $A \subset S^1$ or $\R$ and $z \in S^1$ or $\R$, we write $A - z = \{ a - z : a \in A \}$ for the set $A$ shifted by $z$.
	\item Given a partition $\zeta$ of $S^1$ (resp. $\R$) and a set $A \subset S^1$ (resp. $A \subset \R$), we write $\zeta|_A$ for the partition on $A$ consisting of atoms of the form $C \cap A, C \in \zeta, C \cap A \neq \emptyset$. 
	\item When it is clear from context, we write $\E$ for the expectation with respect to $\P$.
\end{itemize}


\section{Preliminaries: predominant expansion and bound periods}

\subsubsection*{Bound periods: a heuristic}
Consider the dynamics of a smooth unimodal or multimodal map $f : S^1 \to S^1$.
In the pursuit of finding maps $f$ 
accumulating a positive Lyapunov exponent,
the main obstruction is the formation of sinks, and so
a natural assumption to make is that the postcritical
orbits $f^n \hat x, \hat x \in \{ f' = 0\}, n \geq 1$ remain enough far away from $\{ |f'| \leq 1\}$
so that $|(f^n)'(f \hat x)| \gtrsim e^{n \alpha}$
for some $\a > 0$. 

If, for some $x \in S^1$, the orbit $(f^n x)_n$ reaches a small neighborhood of some $\hat x\in \{ f' = 0\}$
at time $t$, then the subsequent iterates $f^{t + i} x$ will closely shadow $f^i \hat x$ 
for $i \leq p = p(d(f^t x, \hat x))$. The time interval
$[t + 1, t + p]$ is referred to as the \emph{bound period} for $x$ at time $t$. 
As we assumed expansion along the postcritical orbit $(f^i \hat x)_{i \geq 1}$, 
one anticipates that the derivative growth $(f^p)'(f^{t + 1} x)$
accumulated along the bound period will balance out the derivative `damage' due to $f'(f^t x)$ (possibly $\ll 1$ 
when $f^t x, \hat x$ are quite close), so that, for instance,
 $(f^{p + 1})'(f^t x) \sim e^{ (p + 1)\alpha'}$ holds for some $\alpha' < \alpha$.

This is a rough summary of a mechanism by which 1D maps with critical points (unimodal and multimodal)
can accumulate a positive Lyapunov exponent for typical trajectories. For an exposition of this method,
see \cite{WY}. 

\medskip

Our aim in Section 2 is to apply a variation of this idea to our model: the condition $(H3)_{c, k}$
 involves the first $k$ iterates of postcritical trajectories, and so bound periods of length up to
$k$ are available to recover derivative growth. In Section 2.1 we 
carry out some essential preliminaries used in the rest of the paper, and in Section 2.2 we
will discuss bound periods for our random compositions.

\subsection{Preliminaries}


\subsubsection{The basic setup}\label{subsubsec:basicSetup}
We fix, below and throughout the paper, a function $\psi: S^1 \to \R$ satisfying (H1) and (H2), as well
as parameters $c \in (0,1), \b \in (0,\frac{1}{100})$ (restricting to $\beta$ in this range incurs no 
loss of generality). 
Moreover, we implicitly fix the parameter $L > 0$, and are allowed to 
take it sufficiently large depending on $c, \b$ and the function $\psi$. 

On rescaling the function $\psi$ in relation to the parameter $L$, we will assume going forward that the following
condition holds in addition to (H1) -- (H2).
\begin{itemize}
	\item[(H4)] We have $\| \psi' \|_{C^0}, \| \psi'' \|_{C^0} \leq 1/10$.
\end{itemize}

\medskip

Separately (i.e., independently of $L$), $k \in \N$ is fixed, and a parameter $a \in [0,1)$ is fixed for which $(H3)_{c, k}$ holds for
the mapping $f = f_{L, a} := L \psi + a \modone$. Finally, we fix a parameter $\e > 0$, on which
constraints (depending on all the previous parameters) will be made as we go along.

\subsubsection{Partition of phase space}

The conditions (H1) -- (H2) imply that there is a constant $K_1 = K_1(\psi) > 0$ with the property that for any $x \in S^1$, 
\begin{align}\label{eq:lowerBoundDer}
|\psi'(x)| \geq K_1 d(x, C_\psi') \, .
\end{align}
We use \eqref{eq:lowerBoundDer} repeatedly, often without mention. For $\eta<0$, we define
\begin{align}
B(\eta) = \{ x \in S^1 : d(x, C_\psi') \leq K_1^{-1}L^{\eta} \} \, .
\end{align}
It is clear that for $x \notin B(\eta)$, we have
$
|f'(x) | \geq L^{\eta + 1} \, ,
$
while $B(\eta)$ is the union of $\# C_\psi'$-intervals of length $\sim L^{\eta}$ each.

\medskip

Define the partition $S^1 = \Gc \cup \Ic \cup \Bc$, where
\[
\Gc = S^1 \setminus B(- \beta) \, , 
\quad \Ic = B(- \beta) \setminus B(- \frac12 - \b) \, ,
\quad \Bc = B(-\frac12 - \b) \, .
\]
We have, then, that 
\[
|f'|_{\Gc}| \geq L^{1 -\b } \, , \quad \text{ and } \quad |f'|_{\Ic}| \geq L^{\frac12-\beta} \, .
\]
Similar estimates apply to $f'_\omega$ on the shifted sets $\Gc_\omega := \Gc - \omega, \Ic_\omega := \Ic - \omega$ for $\omega \in [- \e, \e]$. 

\medskip

Observe that $|f'|_{\Bc}|$ can be arbitrarily small. 
To address this, we subdivide $\Bc = \cup_{l = 1}^k \Bc^l$ in the following way: set
\[
\Bc^k = B(- \frac{k}{2} - \b) \, , 
\]
and for $1 \leq l < k$,
\[
\Bc^l = B(- \frac{l}{2} - \b) \setminus B(- \frac{l + 1}{2} - \b) \, .
\]
Notice that the definition above is consistent with the identification $\Ic = \Bc^0$.
We also use the notation $\Bc^l_\omega := \Bc^l - \omega$ for $\omega \in [- \e, \e]$.
Using \eqref{eq:lowerBoundDer}, one checks that
\[
|f_\omega'|_{\Bc_\omega^l}| \geq L^{- \frac{l - 1}{2} - \b} \quad \text{ for } \quad 1 \leq l < k \, ,
\]
while
on $\Bc_\omega^k$ we have no lower bound on $|f_\omega'|$.

\medskip

The partitions $S^1 = \Gc \cup \Ic \cup \Bc = \Gc \cup \Ic \cup \Bc_1 \cup \cdots \cup \Bc^k$ are used repeatedly
throughout the paper. We will abuse notation and regard these as partitions of $\R$ as well, extended by periodicity
via the parametrization $S^1 \cong [0,1) \cong \R/\Z$.

%
%




%
%
%





\subsection{Bound periods}

The following lemma confirms that a random orbit $(f^i_\uo x)$, initiated at $x \in \Bc^l, 1 \leq l \leq k$, will
closely shadow a postcritical orbit $(f^i \hat x)$ for $l$ steps, i.e., will have a bound period of length $l$.

\smallskip

In Lemma \ref{lem:boundPeriodCoherence} below we do not assume $(H3)_{c, k}$.

\begin{lem}\label{lem:boundPeriodCoherence}
Let $L$ be sufficiently large, and let $k \in \N$ be arbitrary. Assume that 
\begin{align}\label{boundEpsilon}
\e < L^{- \max \{ k-1, \frac12\} - \beta} \, .
\end{align}
Then, we have the following. Let $1 \leq l \leq k$ and fix an arbitrary 
sample $\uo \in \Omega$.
Let $J_0$ be any connected component of $B(- \frac{l + \b}{2})$ 
and let $\hat x = C_\psi' \cap J$ be the (unique) critical point contained in $J_0$.

Then, for all $1 \leq i \leq l$ we have that
\[
f_\uo^i \big( 
J_0
 \big) \subset \Nc_{L^{- \b / 2}} (f^i \hat x) \, .
\]
\end{lem}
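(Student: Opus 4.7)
The plan is to argue by induction on $i$, controlling $D_i := \sup_{x \in J_0} d(f^i_\uo x, f^i \hat x)$ for $1 \leq i \leq l$. Note that to establish $f^i_\uo(J_0) \subset \Nc_{L^{-\b/2}}(f^i \hat x)$, I will actually prove the stronger bound $D_i \lesssim L^{-\b}$, and then $L^{-\b} \leq L^{-\b/2}$ holds absorbing constants into the hypothesis that $L$ is sufficiently large.

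For the base case $i = 1$, the key is Taylor-expanding $f$ around the critical point $\hat x$. Since $f'(\hat x) = L \psi'(\hat x) = 0$ and $\| \psi''\|_{C^0} \leq 1/10$ by (H4), I get $|f(y) - f(\hat x)| \leq (L/20) |y - \hat x|^2$ for $y$ close to $\hat x$. Applying this with $y = x + \omega_0$ where $x \in J_0$ and using $J_0 \subset B(-(l+\b)/2)$ gives $|y - \hat x| \leq K_1^{-1} L^{-(l+\b)/2} + \e$. A short arithmetic check shows that the hypothesis \eqref{boundEpsilon} on $\e$, together with $l \leq k$, implies $\e \leq L^{-(l+\b)/2}$ for all $1 \leq l \leq k$ (the edge case $l = k$ uses $k - 1 + \b \geq (k+\b)/2$ for $k \geq 2$, and $l = k = 1$ uses $1/2 + \b \geq (1+\b)/2$). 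Thus $D_1 \lesssim L \cdot L^{-(l+\b)} = L^{1 - l - \b} \leq L^{-\b}$.

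For the inductive step, I use the crude derivative bound $\|f'\|_\infty \leq L/10$ from (H4). By the Mean Value Theorem,
\begin{equation*}
D_i \leq \|f'\|_\infty \cdot (D_{i-1} + \e) \leq (L/10)(D_{i-1} + \e) \, .
\end{equation*}
Iterating from the base case yields $D_i \lesssim L^{i-1} D_1 + L^{i-1} \e \lesssim L^{i - l - \b} + L^{i-1} \e$. The first term is at most $L^{-\b}$ since $i \leq l$. For the noise term, \eqref{boundEpsilon} together with the observation that $l - 1 \leq \max\{k - 1, 1/2\}$ for $l \leq k$ gives $L^{i-1} \e \leq L^{l - 1} \e \leq L^{-\b}$. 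Combined, $D_i \lesssim L^{-\b} \leq L^{-\b/2}$ for $L$ large, completing the induction.

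The proof is essentially a careful bookkeeping exercise, so the main ``obstacle'' is just verifying the arithmetic inequalities relating $\e$, the diameter $L^{-(l+\b)/2}$ of $J_0$, and the tolerance $L^{-\b/2}$ uniformly in $1 \leq l \leq k$; the crucial mechanism is that the quadratic vanishing of $f$ at $\hat x$ gives a large ``head start'' of $L^{1-l-\b}$ after one step, which can absorb at most $l - 1$ subsequent expansions by the worst-case factor $L/10$ before exceeding the target $L^{-\b/2}$.
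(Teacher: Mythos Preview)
Your proof is correct and follows essentially the same approach as the paper's own argument: exploit the quadratic vanishing of $f$ at $\hat x$ to get the first-step bound $D_1 \lesssim L^{1 - l - \b}$, then use the crude Lipschitz bound $\|f'\|_\infty \leq L/10$ to propagate, and finally verify that the accumulated noise term $L^{i-1}\e$ stays below $L^{-\b}$ via \eqref{boundEpsilon}. The paper phrases this in terms of lengths of nested image intervals $J_i = \tilde f(I_{i-1})$, $I_i = \Nc_\e(J_i)$ (working with the lift $\tilde f$), whereas you track the pointwise distance $D_i$ directly, but the arithmetic and the mechanism are identical.
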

The reason for the upper bound \eqref{boundEpsilon} is that 
 if the perturbation amplitude $\e$ is too large, then 
 $f^i_\uo|_{\Bc^l_{\omega_0}}$ may diverge from $f^i \hat x$ 
 for some $i < k$,
 thereby spoiling the corresponding bound periods. 

\medskip

From Lemma \ref{lem:boundPeriodCoherence} and noting $\Bc^l \subset B(- \frac{l + \b}{2})$, it is straightforward to check that if $L$ is sufficiently large and
 $f = f_a$ satisfies $(H3)_{c, k}$, then
 $f^i \hat x$ is well inside $\Gc$ for $1 \leq i \leq k$.
It follows that for any $1 \leq l \leq k$ and $x \in \Bc^l_{\omega_0}$, 
we have $f^i_\uo(x) \in \Gc$ for all $1 \leq i \leq l$, and the derivative estimate
\[
|(f^{l}_{\theta \uo})'(f_{\omega_0}x)| \geq L^{l( 1 - \beta)} \, .
\]
Moreover, if $1 \leq l < k$ then we have $|(f_{\omega_0})'(x)| \geq L^{1 - \frac{l+1}{2} - \b}$, hence
\[
|(f^{l + 1}_\uo)'(x)| \geq L^{(l + 1)(\frac12 - \b)} \, .
\]

For the purposes of the preceeding paragraph, it suffices to take $L$ large enough so that $L^{\b} \gg 2 / (c K_1)$; 
note in particular that $L$ does not depend on $k$.

\medskip

\begin{proof}[Proof of Lemma \ref{lem:boundPeriodCoherence}]

In the following proof, the lift $\tilde f : S^1 \to \R$ of $f$ is defined by $\tilde f(x) = L \psi(x) + a$, i.e., leaving out the ``$\modone$''
in the definition of $f$. We extend the domain of $\tilde f$ to all of $\R$ by periodicity.

\medskip

Without loss, we regard $J_0$ as an interval in $\R$. Let $\hat x \in C_\psi' \cap J_0$ be the (unique) critical point in $J_0$. Define $I_0 = \Nc_\e(J_0)$ and inductively set $J_{i+1} = \tilde f(I_i)$, $I_{i + 1} = \Nc_\e(J_{i + 1})$. 
Since $f^i \hat x \in J_i$ for all $i$, it suffices to show $\Len(J_i) \leq L^{- \b/2}$ for all $1 \leq i \leq l$.



To start, decompose $I_0 = I_0^- \cup I_0^+$ where $I_0^- = [\hat x - \e - K_1^{-1} L^{- \frac{l + \b}{2} }, \hat x), I_0^+ = [\hat x, \hat x + \e + K_1^{-1} L^{- \frac{l + \b}{2} }]$.
Noting that the images $\tilde f(I_0^-), \tilde f(I_0^+)$ share the left (resp. right) endpoint $\tilde f(\hat x)$ if $f''(\hat x) > 0$ (resp. $f''(\hat x) < 0$),
 we have the estimate
\begin{align*}
\Len(J_1) & \leq \max \{ \tilde f(I_0^+), \tilde f(I_0^-) \} \leq \frac12 L \| \psi'' \|_{C^0} \cdot ( \e + K_1^{-1} L^{- \frac{l + \b}{2} } )^2 \\
& \leq L \max\{ \e, \Len(J_0)\}^2
\end{align*}
using $(H4)$ in the last step. For each $i > 1$, we estimate
\[
\Len(J_i) = \Len(\tilde f(I_{i-1})) \leq L \| \psi' \|_{C^0} \Len(I_{i-1}) \leq L \max \{ \e, \Len(J_{i-1}) \} \, .
\]
by estimating $\Len(I_{i-1}) \leq 2 \e + \Len(J_{i-1}) \leq 3 \max \{ \e, \Len(J_{i-1})\}$ and using $(H4)$.
Bootstrapping, we conclude
\[
\Len(J_i) \leq L^{i-1} \max \{ \e, \Len(J_1)\} \leq \max \{ L^{i-1} \e, L^i \e^2, L^i \Len(J_0)^2 \} \, .
\]
The first two terms are $< L^{- \b}$ by \eqref{boundEpsilon} for all $i \leq k$. For $i \leq l$, the third term is $\leq L^i \cdot 4 K_1^{-2} L^{- l - \b} \leq L^{- \b/2}$. This completes the proof.
\end{proof}

\section{Ergodicity}

In Section 3.1, we prove Proposition \ref{prop:optimal}, which confirms the sharpness of 
Theorems \ref{thm:ergod}, \ref{thm:lyapEst} in the following sense.
To start, condition $(H3)_{c, k}$ for the map $f = f_a$ is compatible with the formation of a sink of period $k + 1$.
For all $\e > 0$ sufficiently small, such sinks persist as random sinks for the random compositions $(f^n_\uo)$, 
i.e., stationary measures for the Markov chain $(X_n)_n$ admitting a negative Lyapunov exponent.
In Proposition \ref{prop:optimal} we make this quantitative by exhibiting a scenario
when $f = f_a$ (i) satisfies $(H3)_{c, k}$; (ii) admits a sink of period $k +1$; and 
(iii) the random composition $(f^n_\uo)$ admits a random sink for all $\e \lesssim L^{- (2 k + 1)}$.
This upper bound for $\e$ approximately matches the upper bound in Theorems \ref{thm:ergod}, \ref{thm:lyapEst},
confirming the view that these results are sharp.

Having established this, in Section 3.2 we proceed with the proof of Theorem \ref{thm:ergod}. We note that in terms of
logical dependence, Section 3 depends on Section 2 and is otherwise independent of the remainder of the paper, Sections 4 -- 6.

\subsection{Sinks}

Let us take on the assumptions made for the map $f = f_{L, a}$ as in Section \ref{subsubsec:basicSetup}, 
except that for Proposition \ref{prop:optimal} we need not assume $(H3)_{c, k}$ holds.
Observe, however, that the hypothesis of Proposition \ref{prop:optimal}, i.e., the 
existence of a sink of period $k + 1$ for $f = f_{L,a}$, is entirely compatible with $(H3)_{c, k}$. 

\begin{prop}\label{prop:optimal}
For all $L$ sufficiently large, depending only on $\psi$, we have the following. 
Let $k \in \N$ be arbitrary, and assume $f = f_{L, a}$ has the property that 
$f^{k + 1} \hat x = \hat x$ for some $\hat x \in C_\psi'$.
Then, for any $\e \leq \frac{1}{49} L^{- (2 k + 1)}$, we have that the 
random composition $f^n_\uo$ admits a stationary measure $\mu$ for which 
\begin{itemize}
\item[(a)] the support  of $\mu$ $\Supp(\mu)$ is contained in a $\frac17 L^{-(k + 1)}$-neighborhood of the orbit $\hat x, f \hat x, \cdots, f^k \hat x$ (in particular, $\Supp \mu \subsetneq S^1$); and
\item[(b)] $\lambda_1(\mu) < 0$.
\end{itemize}
\end{prop}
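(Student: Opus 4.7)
The plan is to exploit the super-attracting structure: since $\psi'(\hat x)=0$, the cycle derivative $(f^{k+1})'(\hat x) = 0$ vanishes, and a thin tubular neighborhood of the orbit $\{\hat x, f\hat x, \ldots, f^k\hat x\}$ should persist as a compact forward-invariant set under small perturbations. I would (i) construct such a neighborhood $V$, (ii) extract a stationary measure via Krylov--Bogolyubov, and (iii) bound the Lyapunov exponent by a uniform per-cycle contraction estimate.

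For (i), set $V_j := B_{r_j}(f^j\hat x)$ for $j = 0, 1, \ldots, k$, with $r_0 := \frac{1}{7}L^{-(k+1)}$, and choose the remaining radii so that $f_\omega(V_j) \subset V_{(j+1)\bmod(k+1)}$ for every $\omega \in [-\e,\e]$. The quadratic behavior $|f(y)-f\hat x| \leq \tfrac{L}{20}(y-\hat x)^2$ coming from (H4) and $\psi'(\hat x)=0$ forces $r_1 := L(r_0+\e)^2/20$; the hypothesis $\e \leq \tfrac{1}{49}L^{-(2k+1)} \leq r_0$ then gives $r_1 \leq Lr_0^2/5$. For $1 \leq j < k$, the crude bound $\|\psi'\|_{C^0}\leq 1/10$ from (H4) suggests the affine recursion $r_{j+1} := \tfrac{L}{10}(r_j+\e)$; solving this and checking the return condition $\tfrac{L}{10}(r_k+\e) \leq r_0$ is a direct geometric-series calculation calibrated exactly to the constants $\tfrac{1}{7}$ and $\tfrac{1}{49}$ in the statement, and gives $r_j \leq r_0$ for every $j$. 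This yields (a). Setting $V := \bigcup_j V_j$ compact and $f_\omega$-invariant, Krylov--Bogolyubov applied to $\tfrac{1}{n}\sum_{i=0}^{n-1} P^i\delta_x$ for any $x\in V$ produces a stationary $\mu$ with $\Supp\mu \subset V$.

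For (b), I claim the uniform cocycle bound
\[
|(f^{k+1}_\uo)'(x)| \leq \frac{1}{35 \cdot 10^k}
\]
for every $x\in V$ and every $\uo \in \Omega$. By the cyclic invariance, if $x \in V_{j_0}$ then $X_i := f^i_\uo(x)\in V_{(j_0+i)\bmod(k+1)}$, so at least one of the $k+1$ arguments $X_i+\omega_i$ lies within $r_0+\e \leq 2r_0$ of $\hat x$. At that step $|f'(X_i+\omega_i)| = L|\psi'(X_i+\omega_i)-\psi'(\hat x)| \leq L\|\psi''\|_{C^0}\cdot 2r_0 \leq L^{-k}/35$, while the remaining $k$ factors are bounded by $L\|\psi'\|_{C^0}\leq L/10$; the chain-rule product is $\tfrac{L^{-k}}{35}\cdot(L/10)^k = (35\cdot 10^k)^{-1}$. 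Taking logarithms, integrating against $\mu\otimes\P$, and using the cocycle identity together with stationarity of $\mu$ gives $(k+1)\lambda_1(\mu) \leq -\log(35\cdot 10^k)$, hence $\lambda_1(\mu) < 0$.

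The main technical care point is calibrating the radii so that the affine recursion closes with precisely the constants $\tfrac{1}{7}$ and $\tfrac{1}{49}$ specified in the statement; conceptually everything follows from the single observation that the orbit contains a critical point, so one factor in each cycle must be as small as $L\cdot r_0$.
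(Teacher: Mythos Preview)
Your proposal is correct and follows the same core strategy as the paper: the critical point $\hat x$ in the periodic orbit makes the cycle super-attracting, so a thin tube around the orbit is forward-invariant for all $\uo$ and carries a stationary measure with a uniform per-cycle contraction. The paper packages this slightly differently---it works only with a single neighborhood $U$ of $\hat x$ and the return map $f^{k+1}_{\uo}$, estimating $d(\hat x, f^{k+1}_{\uo}\hat x)$ and $|f^{k+1}_{\uo}(U)|$ separately rather than tracking a cyclic chain $V_0,\dots,V_k$---but the estimates (quadratic step at $\hat x$, linear bound $L/10$ elsewhere, and the resulting constants $1/7$ and $1/49$) are the same.
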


\begin{proof}
We will show that there is a neighborhood $U$ of $\hat x$ such that for a.e. sample $\uo \in \Omega$,
\begin{itemize}
\item[(i)] $f^{k + 1}_\uo(U) \subset U$\, ; and 
\item[(ii)] $|(f^{k + 1}_\uo )'(x)| < \frac12$ for all $x \in U$. 
\end{itemize}
By standard arguments, (i) -- (ii) imply the existence of a stationary measure $\mu$ with Lyapunov exponent $\lambda(\mu) \leq - \frac{\log 2}{k + 1} < 0$ supported in $\{ f^i_\uo x : x \in U, \uo \in \Omega, 0 \leq i \leq k\}$. At the end, we will estimate the size of this support.

Let $\gamma \in (0,1)$ be a constant, to be taken sufficiently small below, and throughout assume that $\e \leq \gamma L^{- (2 k + 1)}$. Set $U$ to be the closed neighborhood of $\hat x$ of radius $r_U = \sqrt \gamma L^{-(k +1)}$. We estimate
\[
\sup_{z \in U} |(f^{i}_\uo)'(z)| \leq \| f' \|_{C^0}^{i-1} \cdot (\e + \sqrt \gamma L^{-(k + 1)}) \cdot \| f'' \|_{C^0} \leq L^{i} \cdot 2 \sqrt \gamma L^{- (k + 1)} \leq 2 \sqrt \gamma  L^{i - (k + 1)}\, ,
\]
having used the elementary bound $|f_\omega(z)| \leq |z + \omega - \hat x| \cdot \| f'' \|_{C^0} \leq L |z + \omega - \hat x|$ for $z$ near $\hat x$. In particular, at $i = k + 1$ we have that 
\begin{align}\label{eq:derivativeBound11}
|(f^{k + 1}_\uo)'|_U| \leq 2 \sqrt{\gamma} \, ,
\end{align}
hence $U$ maps to an interval $f^{k + 1}_\uo(U)$ of length $|f^{k + 1}_\uo(U)| \leq 2 \sqrt \gamma \cdot |U| = 4 \sqrt{\gamma} \cdot r_U$.
 

Let us now estimate $d(\hat x, f^{k + 1}_\uo(\hat x))$. For simplicity, we pass to the lifts $\tilde f, \tilde f_\omega$: write $\hat x^i = \tilde f^i \hat x, \hat x^i_\uo = \tilde f^i_\uo \hat x$ for $0 \leq i \leq k + 1$. To start,
\[
|\hat x^1 - \hat x^1_\uo| = |\tilde f(\hat x) - \tilde f(\hat x + \omega_0)| \leq \e \cdot \sup_{d(z, \hat x) \leq \e} |f'(z)| \leq \e^2 L \, .
\]
Next, for $i > 0$,
\[
|\hat x^{i + 1} - \hat x^{i + 1}_\uo| = |\tilde f(\hat x^{i}) - \tilde f(\hat x^{i}_\uo + \omega_{i })| \leq  L (\e + |\hat x^i - \hat x^i_\uo|) \, .
\]
Collecting, we obtain
\begin{align*}
d(\hat x, f^{k + 1}_\uo(\hat x)) \leq |\hat x - \hat x^{k + 1}_\uo| & \leq ( L + L^2  + \cdots + L^k) \e + L^{k + 1} \e^2 \\
& \leq 2 L^k \e + L^{k + 1} \e^2 \leq 3 \gamma L^{- (k + 1)} \, , 
\end{align*}
here having assumed $L > 2$. We deduce 
\[
d(\hat x, f^{k + 1}_\uo( \hat x)) \leq 3 \sqrt \gamma \cdot r_U \, .
\]
It is easy to check that the same bound $d(\hat x^i, f^i_\uo(\hat x)) \leq 3 \sqrt \gamma \cdot r_U$ holds for any $0 \leq i \leq k$ as well.

To conclude: for (i) it suffices (see \eqref{eq:derivativeBound11}) to take $\gamma \leq 1/16$. For (ii) we estimate as follows for $z \in U$:
\begin{align}\label{eq:estimateImageU}
d(f^{k + 1}_\uo (z) , \hat x) \leq d(\hat x, f^{k + 1}_\uo (\hat x)) + |f^{k + 1}_\uo (U)| \leq 7 \sqrt{\gamma} \cdot r_U \, .
\end{align}
We conclude that $f^{k + 1}_\uo(U) \subset U$ almost surely as long as $\gamma \leq 1/49$.

Finally, to estimate the support of $\mu$ it suffices to repeat the estimate \eqref{eq:estimateImageU} with $f^i_\uo (z), z \in U$
replacing $f^{k + 1}_\uo(z)$. We conclude that $\mu$ is supported in the $7 \sqrt{\gamma} \cdot r_U$-neighborhood
of the periodic sink $\{ f^i \hat x\}_{0 \leq i \leq k}$.
\end{proof}

\subsection{Ergodicity}\label{subsec:stationaryDistro}

As already seen in the proofs of Lemma \ref{lem:boundPeriodCoherence} and
 Proposition \ref{prop:optimal}, the noise amplitude $\e$ is amplified by
 the strong expansion $L \gg 1$ exhibited by $f = f_{L, a}$. Each of these results
 depended on the noise being \emph{small enough} to control this
amplification. Quite to the contrary, in Section 3.2
we will \emph{take advantage of} this amplification to show that our process $(X_n)$ explores all of phase
space $S^1$ with some positive probability.  The amplification of noise by expansion 
 is a core motif in this paper, one which we will return to in Sections 5 -- 6.

\medskip

Before proceeding to the proof of Theorem \ref{thm:ergod}, let us establish
the setting and a brief reduction. Throughout, we assume the setup for $f = f_{L, a}$
in Section \ref{subsubsec:basicSetup}, including $(H3)_{c, k}$.

\medskip

\medskip

 {\it Reductions. } We first argue that without loss of generality, in
the hypotheses of Theorem \ref{thm:ergod} we may assume
that $\e, k$ are such that the upper bound in \eqref{boundEpsilon} is satisfied, 
so that Lemma \ref{lem:boundPeriodCoherence} applies.
To justify this, consider the following alternative cases: (a) $L^{- (k - 1)} \leq \e < L^{-1}$; (b) $L^{-1} \leq \e <  L^{-1/2}$;
and (c) $\e \geq L^{-1/2}$. For (a), let $k' \in \N$ be such that $L^{- k'} \leq \e < L^{-(k' - 1)}$. Clearly
$k' < k$, hence $(H3)_{c, k}$ implies $(H3)_{c, k'}$, while $\e \geq L^{- k'} \geq L^{- (2 k' + 1)(1 - \b) + \b}$. So, 
it makes no difference to replace $k$ with $k'$ and proceed as before. In case (b), we can replace $k$ with $1$
and proceed as before. Finally, Theorem \ref{thm:ergod} in case (c) is a simple exercise left to
the reader-- see also Theorem 1 in \cite{LS}, where ergodicity as in Theorem \ref{thm:ergod} is proved
for $\e \gtrsim L^{-1}$ for a very similar model of multimodal circle maps.

\smallskip

In addition, on shrinking the parameter $\b$ we will assume the slightly stronger hypothesis
\[
\epsilon \geq L^{- (2 k + 1)(1 - \b) + \b}
\]
on the noise parameter $\e$. In relation to Theorem \ref{thm:ergod}, this incurs
no loss of generality.

\medskip

{\it Notation. } Given an initial $X_0 \in S^1$, we write
$X_n = f^n_\uo(X_0)$ for the Markov chain evaluated at the sample
$\uo \in \Omega$ (notation as in Section 1.1).
We write $\P_{X_0}$ for the law of $X_n$ conditioned on the value of $X_0 \in S^1$.
Moreover, for $n, m \geq 0$, 
random variables $Z_1, Z_2, \cdots, Z_m : \Omega \to \R$, and $X_0 \in S^1$,
we write 
\[
{P^n(X_0, { \cdot } | \{ Z_j, 1 \leq j \leq m \}) = \P_{X_0} (X_n \in \cdot | \sigma(Z_1, \cdots, Z_m))
}\]
for the law of $X_n$
conditioned on $\sigma(Z_1, Z_2, \cdots, Z_m)$.

%

\bigskip

With the setup and reduction established, we now turn to the proof of Theorem \ref{thm:ergod}. 
We break this up into two parts, Propositions \ref{prop:bkToSpread} and \ref{prop:allToBk} below.
\begin{prop}\label{prop:bkToSpread}
There exist $N \in \N, c > 0$ with the
property that for any sample $\uo$ and any $X_0 \in \Bc_{\omega_0}^k$, we have that $P^N(X_0, { \cdot } | \{\omega_i, 0 \leq i \leq N, i \neq 1\})\geq c \Leb(\cdot)$.
\end{prop}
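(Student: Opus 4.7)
The goal is to show that the conditional law of $X_N$ on $S^1$, given all $\omega_i$ with $i \neq 1$, dominates $c \cdot \Leb$. With those $\omega_i$ fixed, $X_N$ is a smooth deterministic function of $\omega_1 \in [-\e, \e]$; call this map $G$. Since $X_1 = f_{\omega_0}(X_0)$ does not depend on $\omega_1$, the dependence of $G$ on $\omega_1$ first enters at $X_2 = f(X_1 + \omega_1)$ and propagates via the chain rule. I would prove the claimed density bound by showing (a) that $G([-\e, \e])$ covers $S^1$, and (b) that $G$ has bounded distortion on each inverse branch of $G$ that maps diffeomorphically onto $S^1$. The density bound then follows from a change-of-variables computation.

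For (a), I would apply Lemma \ref{lem:boundPeriodCoherence} to the initial condition $X_0 + \omega_0 \in \Bc^k$: the iterates $X_i$ lie in $\Nc_{L^{-\b/2}}(f^i \hat x)$ for $1 \leq i \leq k$, and by $(H3)_{c, k}$ (and $L$ large) these lie well inside $\Gc$. Consequently $|f'(X_i + \omega_i)| \geq L^{1-\b}$ for $1 \leq i \leq k-1$, so the chain rule gives $|d X_k / d\omega_1| \geq L^{(k-1)(1-\b)}$ uniformly in $\omega_1$, and the image interval $G_k([-\e, \e])$ has length at least $2\e L^{(k-1)(1-\b)}$ and is centered near $f^k \hat x \in \Gc$. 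I would then continue iterating: at each subsequent step the orbit either remains in $\Gc$ (per-step expansion $\geq L^{1-\b}$) or enters some $\Bc^l$ with $1 \leq l < k$, triggering a new bound period of length $l$ yielding net expansion $\geq L^{(l+1)(1/2-\b)}$ over $l+1$ steps (discussion following Lemma \ref{lem:boundPeriodCoherence}). The amortized expansion is thus at least $L^{1/2-\b}$ per step, so choosing $N = N(k, \b)$ of order $k$ ensures $\min |G'| \geq 1/(2\e)$, and hence $G([-\e, \e])$ wraps around $S^1$.

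For (b), I would expect the $C^2$ bound (H4), combined with the fact that the orbit closely shadows a fixed postcritical segment during each bound period, to yield a Koebe-type distortion bound: on each maximal sub-interval $I \subset [-\e, \e]$ on which $G|_I$ is monotone and surjects onto $S^1$ (a \emph{full branch}), $\sup_I |G'| / \inf_I |G'| \leq D$ for some $D = D(c, \b, \psi)$ independent of $N$ and $\e$. By change of variables, the pushforward density $\rho$ of $(2\e)^{-1} \Leb|_{[-\e, \e]}$ under $G$ then satisfies $\rho(y) \geq (2\e D)^{-1} \sum_I |I|$, where the sum runs over full branches; provided a definite fraction of $[-\e, \e]$ lies in full branches, $\rho \geq c$ uniformly, as desired.

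The main obstacle I foresee is handling excursions into $\Bc^k$, where $|f'|$ admits no positive lower bound, the derivative of $G$ can vanish, and $G$ may fold (become non-monotone). This complicates (a) by allowing potential loss of per-step expansion, and (b) by forcing us to split $[-\e, \e]$ further at folding points, with possibly rapid combinatorial growth in the number of branches as $N$ increases. Ensuring that full branches cover a definite fraction of $[-\e, \e]$, and controlling distortion uniformly across the resulting tree of sub-intervals, should require a careful induction on the bound period structure of each branch, in the spirit of the Collet-Eckmann and Benedicks-Carleson arguments referenced in the introduction.
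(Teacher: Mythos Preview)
Your setup is correct: only $\omega_1$ is randomized, and during the initial bound period (steps $1$ through $k$) the orbit lies well inside $\Gc$, giving $|dX_k/d\omega_1| \geq L^{(k-1)(1-\b)}$ uniformly. The gap is exactly the one you flag at the end. After time $k$ nothing prevents the orbit from entering $\Bc^k$ --- indeed $(H3)_{c,k}$ says nothing about $f^{k+1}\hat x$ --- and once that happens $|G'|$ can vanish. Your claim in (a) that subsequent steps enter only $\Bc^l$ with $l<k$ is therefore unjustified, the conclusion $\min|G'| \geq 1/(2\e)$ fails, and the Koebe-type bound in (b) breaks at the resulting folds. You name the obstacle but do not resolve it; invoking ``a careful induction in the spirit of Collet--Eckmann and Benedicks--Carleson'' is not a mechanism.

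The paper's fix is to abandon pointwise derivative tracking and instead track interval \emph{length}, via the interval process already used for Lemma~\ref{lem:boundSigma}: start from $J_1 = X_1 + [-\e,\e]$, push forward, cut to the longest $\Rc$-atom (Lemma~\ref{lem:atomCut} bounds the loss by a fixed factor $\kappa$), and repeat. Free steps and bound periods with $l<k$ grow length as you describe. The new ingredient is a short lemma handling $\Bc^k$ directly: if $J \subset \Bc^k$ with $|J| \geq L^{-(k+1)(1-\b)+\gamma}$ for some $\gamma > \b/2$, then the quadratic behavior near the critical point gives $|\tilde f(J)| \gtrsim L|J|^2$, and the ensuing $k$-step bound period contributes a further $L^{k(1-\b)}$, yielding $|\tilde f^{k+1}_\uo(J)| \geq L^{-(k+1)(1-\b)+\frac32\gamma}$. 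The exponent therefore improves geometrically even across $\Bc^k$ visits, and after $N=N(k,\b)$ steps the interval wraps around $S^1$. No uniform distortion program is needed: since $c$ may depend on $N$ and $L$, the bounded-per-step losses (the cutting factors $\kappa$, halving at folds, and derivative variation within a single $\Rc$-atom) are simply absorbed into $c$.
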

What this means is that random trajectories initiated in $\Bc^k$ reach all of $S^1$ with some positive probability.
Note that in Proposition \ref{prop:bkToSpread}, we randomize only in $\omega_1$. One reason is that since 
$X_0 \in \Bc^k_{\omega_0}$, we have that $X_1, X_2, \cdots, X_k$ experience a bound period of length $k$, 
and so $\omega_1$ is the only perturbation which experiences the full $k$ steps of expansion
guaranteed by Lemma \ref{lem:boundPeriodCoherence}. Meanwhile,
it is technically more convenient to work with one perturbation $\omega_i$ at a time. 

\medskip

By Proposition \ref{prop:bkToSpread}, it suffices to check that 
almost every trajectory enters $\Bc^k$ after a finite time. Define the stopping time
\[
T := \min \{ i \geq 0 : X_i \in \Bc^k_{\omega_i} \} \, .
\]

\begin{prop}\label{prop:allToBk}
Assume the hypotheses of Theorem \ref{thm:ergod}. Then, there exists $\hat N \in \N$ such
that for any $X_0 \in S^1$,
we have $\P_{X_0}(T \leq \hat N) > 0$.
\end{prop}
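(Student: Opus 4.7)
The plan is to show that, starting from any $X_0 \in S^1$, the ``reachable set''
\[
R_n := \{ f^n_\uo(X_0) : \omega_0, \ldots, \omega_{n-1} \in [-\e,\e]\}
\]
either intersects $\Bc^k$ for some $n \leq \hat N$, or else grows to cover all of $S^1$ by $n = \hat N$. The engine is the amplification-of-noise phenomenon announced in the opening of Section \ref{subsec:stationaryDistro}: the ``width'' of $R_n$ multiplies by at least $L^{1/2 - \beta}$ per step on average, and since $\e \geq L^{-(2k+1)(1-\beta)+\beta}$, we have $\log_L(\e^{-1}) = O(k)$, so only $\hat N = O(k)$ steps are needed to turn a $2\e$-sized initial seed into a set of Lebesgue measure $\geq 1$. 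Once $R_n$ intersects $\Bc^k$, a simple submersion argument converts this into positive $\P$-probability.

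\textbf{Growth of $R_n$.} I track $R_n$ as a finite union of intervals, using the recursion $R_{n+1} = f(R_n + [-\e,\e])$. Where a sub-interval of $R_n + [-\e, \e]$ lies in $\Gc \cup \Ic$ and avoids the critical set $C_\psi'$, the image expands by a factor $\geq L^{1/2 - \beta}$, using the lower bound on $|f'|$ there. Where a piece of $R_n$ meets $\Bc^l_{\omega_n}$ for some $1 \leq l < k$, Lemma \ref{lem:boundPeriodCoherence} and the derivative estimate following it guarantee that over the next $l+1$ iterations that piece expands by at least $L^{(l+1)(1/2-\beta)}$---matching the average rate $L^{1/2-\beta}$ per step. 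If any piece of $R_n$ enters $\Bc^k$, we have $R_n \cap \Bc^k \neq \emptyset$ and this step of the argument terminates. Iterating, with $\hat N$ chosen so that $\e \cdot L^{\hat N \cdot (1/2 - \beta)} \geq 1$---so $\hat N = O(k)$---we obtain either $R_n \cap \Bc^k \neq \emptyset$ for some $n \leq \hat N$, or $|R_{\hat N}| \geq 1$, in which case $R_{\hat N} = S^1 \supset \Bc^k$.

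\textbf{From reachable set to positive probability.} Suppose $R_n \cap \Bc^k \neq \emptyset$ for some $n \leq \hat N$. Consider the $C^1$ map
\[
\Phi : [-\e, \e]^{n+1} \to S^1 \, , \quad \Phi(\omega_0, \ldots, \omega_n) := f^n_\uo(X_0) + \omega_n \, ,
\]
which, by $\partial_{\omega_n}\Phi \equiv 1$, is an open submersion. Its image equals $R_n + [-\e, \e]$, which contains $\Bc^k$ by hypothesis, and $\Bc^k$ is an open set; hence $\Phi^{-1}(\Bc^k)$ is a non-empty open subset of $[-\e, \e]^{n+1}$, and so has positive $\P$-measure. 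This gives $\P_{X_0}(X_n + \omega_n \in \Bc^k) > 0$, which is exactly $\P_{X_0}(X_n \in \Bc^k_{\omega_n}) > 0$, implying $\P_{X_0}(T \leq n)>0$.

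\textbf{Main obstacle.} The principal technical task is the book-keeping of $R_n$ as it grows: tracking which sub-interval falls in which partition piece $\Gc, \Ic, \Bc^l$; accounting for sub-intervals that straddle $C_\psi'$ (where $f(R_n + [-\e, \e])$ may be a union of overlapping intervals, so that the naive bound $|f(I)| \geq \min |f'| \cdot |I|$ need not hold); and following the slowest-growing sub-interval through its bound period. The resolution---enough for our purposes, since we only need $|R_n|$ to grow, not every sub-interval---is to track one well-chosen monotone sub-interval of $R_n$ and to invoke Lemma \ref{lem:boundPeriodCoherence} together with its following corollary whenever it enters some $\Bc^l$ with $l < k$, thereby cashing in the $L^{(l+1)(1/2-\beta)}$ expansion over the bound period.
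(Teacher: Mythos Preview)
Your approach is essentially the paper's: track a single sub-interval that grows geometrically by at least $L^{1/2-\beta}$ per step (averaged over bound periods) until it either meets $\Bc^k$ or exceeds a threshold length, then convert reachability to positive probability. The paper formalizes the ``well-chosen sub-interval'' as a random interval process $(J_i)$ where $J_{i+1}$ is the longest atom of a coarse partition $\Rc_{\omega_{i+1}}$ restricted to $\tilde f_{\omega_i}(J_i)$.

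The genuine gap in your proposal is precisely the point you flag in ``Main obstacle'' but do not resolve: you assert one can ``track one well-chosen monotone sub-interval'' without supplying the estimate that makes this work. When your tracked interval straddles the boundary between partition pieces $\Gc, \Ic, \Bc^1, \ldots, \Bc^k$, you must cut it down to a single piece before applying either the $|f'| \geq L^{1/2-\beta}$ bound or the bound-period estimate from Lemma \ref{lem:boundPeriodCoherence}. The paper handles this with Lemma \ref{lem:atomCut}: for any interval $\bar J$ with $|\bar J| < L^{-\beta}$, the longest atom of $\Rc|_{\bar J}$ has length at least $\kappa |\bar J|$ for a fixed $\kappa > 0$ depending only on $\psi$. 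This constant is then absorbed into $L^{1/2-\beta}$ for $L$ large, yielding net growth $\geq L^{1/2 - 2\beta}$ per free step. Without such a lemma, your sub-interval could in principle shrink by an uncontrolled factor at each cut, and the growth argument collapses. You should state and prove this cutting estimate.

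One minor slip: in your submersion step you write that the image $R_n + [-\e,\e]$ ``contains $\Bc^k$ by hypothesis,'' but your hypothesis is only $R_n \cap \Bc^k \neq \emptyset$. This is harmless---you only need $\Phi^{-1}(\Bc^k)$ nonempty and open, which follows from $(R_n + [-\e,\e]) \cap \Bc^k \neq \emptyset$---but the wording should be corrected.
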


\begin{proof}[Proof of Theorem \ref{thm:ergod} assuming Propositions \ref{prop:bkToSpread}, \ref{prop:allToBk}]
Observe that ergodic measures $\mu$ (1) 
exist by a standard tightness argument, 
and (2) automatically inherit absolute continuity w.r.t. Lebesgue on $S^1$ 
from the same property for our random perturbations $\omega_i, i \geq 0$. 
So, to conclude uniqueness it suffices to check
that for all $X_0 \in S^1$, $P^M(X_0, \cdot)$ is supported on all of $S^1$ (i.e., assigns positive mass to all open intervals)
for some $M = M(X_0) \in \N$.
For more details, see, e.g., the characterization 
of ergodicity for stationary measures of random dynamical systems 
in Lemma 2.4 on pg. 19 of \cite{kifer2012ergodic}.

To complete the proof, fix $X_0 \in S^1$ and let $n \leq \hat N$ be such that $\P_{X_0}(T = n) > 0$. Then, for any interval $J \subset S^1$ with nonempty interior,
\begin{align*}
P^{n + N }(X_0, J) 
& = \E \bigg(  P^N\big(X_{n}, J \big| \{\omega_i\}_{0 \leq i \leq n+N, i \neq n+1} \big) \bigg) \\
& \geq \E \bigg( \chi_{T = n} \cdot P^N\big(X_{n}, J \big| \{\omega_i\}_{0 \leq i \leq n+N, i \neq n+1} \big) \bigg)\\
 & \geq \E \bigg(  \chi_{T = n} \cdot c \Leb(I)  \bigg)  = c \cdot \P_{X_0}(T = n ) \cdot \Leb(I) > 0 \, . 
\end{align*}
Here, $\E_{X_0}$ refers to the 
expectation conditioned on the value of $X_0$. 

This completes the proof. It remains to check Propositions \ref{prop:bkToSpread}, \ref{prop:allToBk}.
\end{proof}



\medskip

{\it In the remainder of Section 3, we prove Propositions \ref{prop:allToBk}, \ref{prop:bkToSpread}, in that order. 
With the above setup assumed, we hereafter fix $\e \in [L^{- (2 k + 1)(1 - \b) + \b}, L^{- \max\{ k -1, \frac12\}}]$.}

\subsubsection{Constructions and a preliminary Lemma}


Define $\Rc$ to be the partition of $S^1$ into the connected components of the sets $\Gc, \Ic = \Bc^0, \Bc^1, \cdots, \Bc^k$.
For $\omega \in [- \e, \e]$, let $\Rc_\omega$ denote the partition into atoms of the form $\a - \omega, \a \in \Rc$.
Extending by periodicity, we regard $\Rc, \Rc_\omega$ as partitions on $\R$ as well. Given an interval $J \subset \R$, let us write $\Rc|_J = \{ \a \cap J : \a \in \Rc\}$. For $\omega \in [- \e, \e]$, 
the partition $\Rc_\omega|_J$ of $J$ is defined analogously.

\begin{lem}\label{lem:atomCut}
Assume $\bar J \subset \R$ is an interval with $|\bar J| < L^{- \beta}$. Let $J$ be the longest atom of $\Rc|_{\bar J}$. Then, $|J| \geq \kappa |\bar J|$, where $\kappa = \min \{ \frac15, K_1^{-1}\}$.
\end{lem}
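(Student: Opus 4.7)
The plan is to exploit the geometric-sequence structure of $\Rc$ near each critical point $\hat x \in C_\psi'$. Writing $q_l := K_1^{-1} L^{-l/2 - \beta}$, on each side of $\hat x$ the atoms of $\Rc$ contained in $B(-\beta)$ are intervals of length $q_l - q_{l+1} = q_l(1 - L^{-1/2})$ for $l = 0, 1, \ldots, k-1$, together with the half of $\Bc^k$ of length $q_k$. These lengths form a geometric sequence with common ratio $L^{-1/2}$, so the total length of all atoms at levels $\geq l$ on one side is bounded by $q_l/(1 - L^{-1/2}) \leq 2 q_l$ for $L \geq 4$, meaning the outermost atom meeting $\bar J$ should dominate.

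First I would reduce to $\bar J$ being near at most one critical point: by (H1), elements of $C_\psi'$ are separated by some $\delta = \delta(\psi) > 0$, and since $|\bar J| < L^{-\beta} \ll \delta$ for $L$ large, $\bar J$ meets the $B(-\beta)$-neighborhood of at most one $\hat x$. Then I would case-split. If $\bar J \cap B(-\beta) = \emptyset$, then $\bar J \subset \Gc$ and, being connected, lies in a single component of $\Gc$, so $J = \bar J$. If $\bar J$ lies on one side of $\hat x$ (say the right, with $\hat x \leq \ell < r$) and meets $B(-\beta)$, let $A_*$ at level $l_*$ be the atom containing the right endpoint $r$. If $\bar J \subset A_*$ we are done; otherwise $|\bar J| \leq r - \hat x \leq q_{l_*}$, and combined with the geometric tail bound, the longest of the three candidates --- the partial outer piece $|\bar J \cap A_*|$, the next fully contained atom, or (when applicable) the $\Gc$-piece --- has length at least $\kappa |\bar J|$. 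The most delicate subcase is where $\bar J$ straddles the $\Gc/B(-\beta)$-boundary $p_0 := \hat x + q_0$: splitting $|\bar J| = a + b$ into $\Gc$-part and $B(-\beta)$-part, I would dichotomize on $a \geq b/4$ versus $a < b/4$. In the former, the $\Gc$-atom gives $|J| \geq a \geq |\bar J|/5$; in the latter $|\bar J| < \tfrac{5}{4} b \leq \tfrac{5}{4} K_1^{-1} L^{-\beta}$ while the $\Bc^0$-half atom of length $K_1^{-1} L^{-\beta}(1 - L^{-1/2})$ is fully contained in $\bar J$, yielding $|J|/|\bar J| \geq \tfrac{4}{5}(1 - L^{-1/2})$. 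Finally, if $\bar J$ contains $\hat x$, I would split $\bar J = \bar J_+ \cup \bar J_-$ at $\hat x$, take WLOG $|\bar J_+| \geq |\bar J|/2$, and apply the preceding analysis to $\bar J_+$, using that $\Bc^k$ contains both sides of $\hat x$ so that $\bar J \cap \Bc^k$ benefits from both halves in borderline cases.

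The $K_1^{-1}$ factor in $\kappa = \min\{1/5, K_1^{-1}\}$ comes from the scaling mismatch between $|\bar J| < L^{-\beta}$ and the $K_1^{-1} L^{-\beta}$-length scale of the $\Bc^0$-half atom: when $K_1 > 5$, the ratio $K_1^{-1}(1 - L^{-1/2})$ rather than $1/5$ becomes the binding constraint in the straddle subcase. The hard part will be the bookkeeping of constants across the several subcases to confirm the sharp bound $\min\{1/5, K_1^{-1}\}$ for $L$ sufficiently large; the underlying geometric argument, based on the geometric ratio $L^{-1/2}$ between consecutive atoms, is otherwise elementary.
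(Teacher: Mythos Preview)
Your approach is sound and would yield the lemma with \emph{some} positive constant $\kappa$, which is all that is needed downstream. Both arguments rest on the same geometric fact: the lengths of the $\Bc^l$-components near a fixed $\hat x$ form a geometric sequence with ratio $L^{-1/2}$, so the tail past any level is dominated by the outermost term.

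The paper organizes the case analysis differently and more economically. Rather than distinguishing one-sided, $\Gc$-straddle, and $\hat x$-straddle cases, it first disposes of the trivial case where $\Rc|_{\bar J}$ has at most two atoms, then observes that $\bar J$ must contain a full component of some $\Bc^l$ and lets $l_1$ be the minimal such level. It then splits into (i) $J \subset \Bc^{l_1}$ versus (ii) $J \cap \Bc^{l_1} = \emptyset$. In case (i) one simply sums $|\bar J \cap \Gc| + |\bar J \cap \Ic| + |\bar J \cap \Bc|$: each of the first two has at most two components, each of length $\leq |J|$ since $J$ is longest, and the deeper part contributes $O(L^{-1/2})|J|$, giving $|\bar J| \leq 5|J|$ directly with no need to split at $\hat x$. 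Case (ii) with $l_1 = 0$ is where $K_1^{-1}$ actually enters: when the longest atom $J$ lies in $\Gc$ but $\bar J$ contains a full $\Bc^{0,\pm}$, one uses $|J| \geq |\Bc^{0,\pm}| \approx K_1^{-1} L^{-\beta}$ against the hypothesis $|\bar J| < L^{-\beta}$.

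Three specific points on your write-up. First, in your straddle subcase with $a < b/4$, the $\Bc^0$-half atom is not automatically fully contained (take $b$ small); when it is not, $\Rc|_{\bar J}$ has only two atoms and the bound is trivial, but this should be stated. Second, your diagnosis of where $K_1^{-1}$ comes from is off: in your own straddle computation the $K_1$'s cancel and you obtain $\tfrac{4}{5}(1 - L^{-1/2})$, which is $\geq 1/5$ for large $L$ regardless of $K_1$. The $K_1^{-1}$ in the paper's $\kappa$ comes from comparing $|\Bc^{0,\pm}|$ directly to the global hypothesis $|\bar J| < L^{-\beta}$, not from any internal ratio of atoms. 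Third, handling the $\hat x$-straddle by halving will cost you a factor of two in the constant. None of this is fatal, but if you want the specific value $\kappa = \min\{1/5, K_1^{-1}\}$, the paper's organization gets there with less bookkeeping.
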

\begin{proof}[Proof of Lemma \ref{lem:atomCut}]
Some notation for this proof: given $\hat x \in \{ \tilde f' = 0 \} \subset \R$ and $0 \leq l \leq k$, define 
$\Bc^{l, +}(\hat x)$ to be the connected component of $\Bc^l$ to the immediate right of $\hat x$, and $\Bc^{l, -}(\hat x)$
to be the connected component to the immediate left. Let us write $\Bc(\hat x)$ for the component of $\Bc$ containing $\hat x$.

\smallskip

If $\Rc|_{\bar J}$ has only one or two atoms of positive length, then $|J| \geq \frac15 |\bar J|$ holds trivially.
Hereafter we assume $\Rc|_{\bar J}$ consists of three or more atoms of positive length. In particular,
$\bar J$ contains a connected component of $\Bc^l$ for some $0 \leq l \leq k$, since $|\bar J| < L^{- \beta}$ was assumed.
Let $\hat x \in \{ \tilde f' =0 \}$ be the nearest critical point to $\bar J$.

Define
\[
l_1 = \min \{ 0 \leq l \leq k : \bar J \text{ contains a component of } \Bc^l\} \, .
\]
There are two cases: (i) $J \subset \Bc^{l_1}$, in which case $J = \Bc^{l_1, \pm}(\hat x)$ for some choice of $\pm$,
or (ii) $J \cap \Bc^{l_1} = \emptyset$.

For case (i), assume first that $l_1 = 0$. WLOG we assume $J = \Bc^{0, +}(\hat x)$. Note that $\bar J \cap \Gc$ consists
of at most two components, hence $|\bar J \cap \Gc| \leq 2 |J|$, while $\bar J \cap \Bc$ has one component, hence
$|\bar J \cap \Bc| \leq 2 K_1^{-1} L^{- \frac12 - \b} \leq 2 L^{- \frac12} |J|$. Finally, $\bar J \cap \Ic$ has at most two components, 
and so $|\bar J \cap \Ic| \leq 2 |J|$. In total,
\[
|\bar J| \leq |\bar J \cap \Gc| + |\bar J \cap \Ic| + |\bar J \cap \Bc| \leq (4 + 2L^{- \frac12}) |J| \leq 5 |J| \, .
\]

Assuming now that $l_1 > 0$, WLOG we have $J = \Bc^{l_1, +}(\hat x)$. Moreover,
 $\bar J \subset \cup_{i = l_1 - 1}^k \Bc^l$; otherwise, $\bar J$ would contain an intact
component of $\Bc^{l_1-1}$, a contradiction. As before,
$\bar J \cap \Bc^{l_1 - 1}$ has at most two components, each of length $\leq |J|$, while
$\bar J \cap \cup_{l_1 + 1}^k \Bc^{l}$ has at most one component of length 
\[
\leq 2 K_1^{-1} L^{- \frac{l_1 + 1}{2} - \b} 
\leq 2 L^{- \frac12} |J| \ll |J| \, ,
\]
unless $l_1 = k$, in which case we can ignore this contribution. As before, we conclude $|\bar J| \leq 3 |J|$.

\medskip

For case (ii), if $l_1 = 0$, then $J \subset \Gc$. Note 
$\bar J$ does contains some atom $\Bc^{0, \pm}(\hat x)$, hence $|J| \geq K_1^{-1} L^{- \b} > K_1^{-1} |\bar J|$,
having assumed in Lemma \ref{lem:atomCut} that $|\bar J| < L^{- \b}$.

If $l_1 > 0$, then likewise it is not hard to show that $J \subset \Bc^{l_1 - 1}$. As before,
$\bar J$ contains some $\Bc^{l_1, \pm}(\hat x)$
and so $|J| \geq K_1^{-1} L^{- \frac{l_1}{2} - \b}$ holds. One now repeats the same arguments
as for case (i), $l_1 > 0$.
\end{proof}


\subsubsection{Proof of Proposition \ref{prop:allToBk}}

To prove Proposition \ref{prop:allToBk}, we introduce the \emph{random interval process} $(J_i)_{i \geq 0}$ 
of subintervals of $\R$, defined as follows.
Fix $X_0 \in S^1$. 
To start, $J_0 := X_0 + [- \e, \e]$, regarded as an interval in $\R$. 
We set $\bar J_1 := \tilde f(J_0)$ and define 
$J_1$ to be the longest atom of $\Rc_{\omega_1}|_{J_1}$; if more than one 
atom has maximal length, then select $J_1$ to be the rightmost one.
Inductively, given $J_0, \cdots, J_i$, define $\bar J_{i + 1} := \tilde f_{\omega_i}(J_i)$ 
and $J_{i + 1}$ to be the longest atom of  $\Rc_{\omega_{i + 1}}|_{\bar J_{i +1 }}$, 
with the same rule if there is a tie for longest atom.

We terminate the process $(J_i)_i$ at the stopping time $\sigma := \min \{ \sigma_1, \sigma_2\}$, where
\begin{gather*}
\sigma_1 := \min \{ i : |\bar J_i| > L^{- \b}\} \, , \quad \sigma_2 := \min \{ i : J_i \subset \Bc^k_{\omega_i}\} \, .
\end{gather*}

\begin{lem}\label{lem:boundSigma}
There exists $\hat N = \hat N(k, \beta) \in \N$ for which $\P_{X_0} (\sigma \leq \hat N-1) > 0$ holds.
\end{lem}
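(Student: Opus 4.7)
The plan is to prove the stronger deterministic claim that $\sigma \leq \hat N - 1$ for every sample $\uo \in \Omega$, where $\hat N = \hat N(k, \beta)$; this immediately gives $\P_{X_0}(\sigma \leq \hat N - 1) = 1 > 0$. The engine of the proof is a geometric lower bound on the growth of $|J_i|$ while $i < \sigma$.

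For $i \geq 1$ with $i < \sigma$, by construction $J_i$ is a single atom of $\Rc_{\omega_i}|_{\bar J_i}$, and since $i < \sigma_2$ it lies in exactly one of $\Gc_{\omega_i}$, $\Ic_{\omega_i}$, or $\Bc^l_{\omega_i}$ for some $1 \leq l < k$. In the first two cases, the one-step expansion bounds of Section 2.1 combined with Lemma \ref{lem:atomCut} give $|J_{i+1}| \geq \kappa L^{1/2 - \beta} |J_i|$. In the harder $\Bc^l$ case, I would invoke Lemma \ref{lem:boundPeriodCoherence} applied to the connected component $J \supset J_i + \omega_i$ of $B(-(l+\beta)/2)$, obtaining $\bar J_{i+j} \subset \Nc_{L^{-\beta/2}}(f^j \hat x)$ for $1 \leq j \leq l$, where $\hat x \in C_\psi'$ is the critical point in $J$. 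Under $(H3)_{c, k}$ we have $d(f^j \hat x, C_\psi') \geq c$, and for $L$ large with $L^{-\beta/2} + \e \ll c$ this forces $\bar J_{i+j}$ into a single connected component of $\Gc_{\omega_{i+j}}$; hence no cutting occurs and $J_{i+j} = \bar J_{i+j}$ during the bound period. Chaining the derivative bounds $|\tilde f'|_{\Bc^l}| \geq L^{-(l-1)/2 - \beta}$ and $|\tilde f'|_{\Gc}| \geq L^{1 - \beta}$ yields
\[
|\bar J_{i+l+1}| \geq L^{(l+1)(1/2 - \beta)} |J_i|,
\]
and the one final cut gives $|J_{i+l+1}| \geq \kappa L^{(l+1)(1/2 - \beta)} |J_i|$.

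In all cases, the amortized per-step growth of $|J_i|$ is at least $\kappa L^{1/2 - \beta} \geq L^{1/4}$ for $L$ sufficiently large (depending on $\kappa, \beta$ but not $k$), since the worst regime is a single step in $\Ic$. Starting from $|J_0| = 2\e \geq 2 L^{-(2k+1)(1-\beta) + \beta}$ and aiming for the threshold $|\bar J_i| > L^{-\beta}$, the process reaches $\sigma_1$ within $\hat N = O(k/\beta)$ steps unless $\sigma_2$ fires first; either way $\sigma \leq \hat N - 1$.

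The main technical hurdle is verifying that no cutting occurs during the $(l+1)$-step bound period, so the lone cut-loss factor $\kappa$ can be amortized across all $l+1$ steps rather than compounding. This relies on the confluence of Lemma \ref{lem:boundPeriodCoherence}, the $(H3)_{c, k}$ separation of postcritical orbits from $C_\psi'$, and the upper bound $\e \leq L^{-\max\{k-1, 1/2\}}$ from the reductions. A secondary bookkeeping point is the first step: since $J_0$ need not sit in a single atom, $|\bar J_1| = |\tilde f(J_0)|$ requires a direct quadratic estimate via the non-degeneracy $|\tilde f''(\hat x)| \gtrsim L$ when $J_0$ straddles a critical point, yielding $|\bar J_1| \gtrsim L \e^2$; this only perturbs the constant in $\hat N$.
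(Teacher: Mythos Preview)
Your proposal is correct and follows essentially the same approach as the paper: both argue deterministically that either $\sigma_2$ fires early or, failing that, the amortized growth factor $\kappa L^{1/2 - \beta}$ per step (with no cutting during bound periods via Lemma \ref{lem:boundPeriodCoherence} and $(H3)_{c,k}$) forces $|\bar J_i|$ past $L^{-\beta}$ within $\hat N = \hat N(k,\beta)$ steps. Your treatment is in fact slightly more careful about the initial step when $J_0$ straddles a critical point, a point the paper glosses over.
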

\begin{proof}[Proof of Proposition \ref{prop:allToBk} assuming Lemma \ref{lem:boundSigma}]
Observe that for each $i \geq 0$, 
\[
\bar J_i \subset \tilde f^{n-1}_{\theta \uo} \circ \tilde f \big( \Nc_\e(X_0) \big) \, ,
\]
hence the projection $ \bar J_i \modone $ of $\bar J_i$ to $S^1$ is a subset of the support of 
the measure $\P_{X_0}(X_i \in \cdot | \{ \omega_i \}_{i \neq 0})$. 

On the event $\sigma = \sigma_1 = m$ for some $m \geq 0$, it is not hard to see that 
$|\tilde f_{\omega_m}(\bar J_m)| \gg 1$ (see Section 2), hence on the event $\{ \sigma = \sigma_1\}$ 
we have $T \leq \sigma_1 + 1$.
Meanwhile, $T \leq \sigma_2$ holds unconditionally (note $\tilde X_m \in \Bc^k_{\omega_m}$ iff $X_m \in \Bc^k_{\omega_m}$), hence 
\[
T \leq \sigma + 1
\]
holds almost surely.
%

To complete
the proof of Proposition \ref{prop:allToBk}, it remains
to prove Lemma \ref{lem:boundSigma}.
\end{proof}

\begin{proof}[Proof of Lemma \ref{lem:boundSigma}]
We will show that conditioned on $\{ \sigma_2 > \hat N\}$, we have $\sigma_1 \leq \hat N$.

Define $t_1 = \min \{ t : J_t \subset \Bc_{\omega_t}\}$ and let $p_1\in \{ 1 , \cdots, k-1\}$ be
such that $J_{t_1} \subset \Bc^{p_1}_{\omega_{t_1}}$.
Inductively, for $j > 1$ set
\[
t_j = \min\{ t > t_{j-1} : J_t \subset \Bc_{\omega_t}\}
\]
and let $p_j$ be such that $J_{t_j} \subset \Bc^{p_j}_{\omega_{t_j}}$.
We let $q \geq 0$ be such that $t_q \leq \hat N < t_{q + 1}$ (note $q = 0$ is allowed).

At time $t_j$, the interval process $J_{t_j}$ is said to initiate a \emph{bound period}
of length $p_j$; that is, $J_{t_j + 1}, \cdots, J_{t_j + p_j}$ shadow some postcritical
orbit in the sense of Lemma \ref{lem:boundPeriodCoherence}. In particular,
$t_j + p_j + 1 \leq t_{j + 1}$ for all $j$. For $t_j + p_j + 1 \leq t \leq t_{j + 1}$, we say that
the interval $J_t$ is \emph{free}.

When $t$ is free, expansion on $\Gc \cup \Ic$ (see Section 2) and Lemma \ref{lem:atomCut}
imply 
\begin{align}\label{eq:growFreeIntervals}
|J_{t + 1}| \geq \kappa |\bar J_{t + 1}| \geq \kappa L^{\frac12 - \b} |J_t| \, ,
\end{align}
while along bound periods (having conditioned on $\{ \sigma_2 > \hat N\}$, it follows that $p_j < k$ for all $j \leq q$) we have
\begin{align}\label{eq:growBoundPeriodInterval}
J_{t_j + p_j + 1} \geq \kappa |\bar J_{t_j + p_j + 1}| \geq \kappa L^{(\frac12 - \b)(p_j + 1)} |J_{t_j}|
\end{align}
since, by Lemma \ref{lem:boundPeriodCoherence},
we have $\bar J_{t_j + p_j + 1} = \tilde f^{p_j + 1}_{\theta^{t_j} \uo} J_{t_j}$ 
(i.e., no cutting can occur during a bound period).
We obtain
that when $J_t$ is free, we have
\[
|\bar J_{t}| \geq \bigg( \kappa L^{\frac12 - \b} \bigg)^t \cdot 2 \e \geq L^{t (\frac12 - 2 \b)} \cdot 2 \e \, .
\]
when $L$ is sufficiently large. Since, for any $t$, the interval $J_{t'}$ is free
for at least one $t' \in \{ t, \cdots, t + k\}$, and  
$\e \geq L^{- (2 k + 1)(1 - \b) + \b}$ was assumed, it follows that
$\sigma_1 \leq \hat N$, where $\hat N = \hat N(k, \beta)$ depends on $k, \beta$ alone.
\end{proof}

\subsubsection{Proof of Proposition \ref{prop:bkToSpread}}\label{provePropa}

Assume $X_0 \in \Bc^k_{\omega_0}$. We form what is essentially the same 
interval process as before, starting
now with the interval 
\[
J_1 := X_1 + [- \e, \e] \, ,
\]
again regarded as a subset of $\R$, and taking $\bar J_2 := \tilde f(J_1)$, and $J_2 \in \Rc|_{\bar J_2}$
the longest atom. The intervals $J_3, J_4, \cdots$ are defined the same as before.

As in the proof of Lemma \ref{lem:boundSigma}, no cutting occurs during the initial bound period of length $k$, hence 
$\bar J_{k + 1} = \tilde f_{\theta^2 \uo}^{k-1} \circ \tilde f (\Nc_\e(X_1))$. 
By Lemma \ref{lem:boundPeriodCoherence} and Lemma \ref{lem:atomCut}, this implies
\[
|J_{k + 1}| \geq \kappa |\bar J_{k + 1}| \geq L^{- (k + 1)(1 - \b) + \b/2} \, ,
\]
perhaps taking $L$ sufficiently large (independently of $k$).

With $t_1 = 0, p_1 = k$ and $t_j, p_j, j \geq 2$ defined as in the proof of Lemma \ref{lem:boundSigma},
note that if $p_j < k$ then \eqref{eq:growBoundPeriodInterval} holds, while if $t$ is free we have that \eqref{eq:growFreeIntervals} holds. It remains to check that some interval growth occurs when $p_j = k$; we do so below.

\begin{lem}
Assume $L$ is sufficiently large, depending on $\beta$. Let $J \subset \Bc^k_{\omega_0}$ be an interval for which $|J| \geq L^{- (k + 1)(1 - \b) + \gamma}$ for some constant $\gamma > \beta / 2$. Then,
$|\tilde f^{k + 1}_\uo(J)| \geq L^{- (k+1)(1 - \b) + \frac32 \gamma}$.
\end{lem}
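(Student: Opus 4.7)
The strategy is to split the $(k+1)$-fold expansion of $J$ into two pieces: a quadratic lower bound $|\tilde f_{\omega_0}(J)| \gtrsim L|J|^2$ coming from the parabolic geometry of $\tilde f$ near the relevant critical point, followed by a factor $L^{k(1-\beta)}$ from the subsequent $k$ iterates, during which Lemma \ref{lem:boundPeriodCoherence} together with $(H3)_{c,k}$ guarantees that the images remain in the expanding region $\Gc$.

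For the initial step, since $J \subset \Bc^k_{\omega_0}$, the shifted interval $J + \omega_0$ is contained in the connected component of $\Bc^k = B(-k/2 - \beta)$ surrounding some critical point $\hat x \in C'_\psi$. Using (H2) and the Taylor expansion $\tilde f(\hat x + s) = \tilde f(\hat x) + \tfrac{1}{2} L\psi''(\hat x) s^2 + O(Ls^3)$, a short case analysis on the position of $J + \omega_0$ relative to $\hat x$ (at distance $d_0 \geq |J|/2$ on one side, versus $d_0 < |J|/2$ including the folding scenario when $J + \omega_0$ straddles $\hat x$) yields
\[ |\tilde f_{\omega_0}(J)| = |\tilde f(J + \omega_0)| \geq c_1 L |J|^2, \]
for a constant $c_1 = c_1(\psi) > 0$ depending only on $\min_{\hat x \in C'_\psi} |\psi''(\hat x)|$.

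For the remaining $k$ iterates, since $J + \omega_0 \subset \Bc^k \subset B(-(k + \beta)/2)$, Lemma \ref{lem:boundPeriodCoherence} with $l = k$ applied to the component of $B(-(k+\beta)/2)$ containing $\hat x$ yields $\tilde f^i_\uo(J) \subset \Nc_{L^{-\beta/2}}(f^i \hat x)$ for $1 \leq i \leq k$. By $(H3)_{c,k}$ and the discussion just after Lemma \ref{lem:boundPeriodCoherence}, for $L$ large each $\tilde f^i_\uo(J)$ together with its $\omega_i$-shift lies well inside $\Gc$, so $|\tilde f'_{\omega_i}| \geq L^{1-\beta}$ on $\tilde f^i_\uo(J)$. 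Multiplying across the $k$ iterates,
\[ |\tilde f^{k+1}_\uo(J)| \geq L^{k(1-\beta)} \cdot |\tilde f_{\omega_0}(J)| \geq c_1 L^{1 + k(1-\beta)} |J|^2. \]
Substituting $|J| \geq L^{-(k+1)(1-\beta) + \gamma}$, the right-hand side becomes $c_1 L^{\beta + 2\gamma - (k+1)(1-\beta)}$, which beats the target $L^{-(k+1)(1-\beta) + 3\gamma/2}$ provided $c_1 L^{\beta + \gamma/2} \geq 1$; since $\beta, \gamma > 0$, this holds for all $L$ sufficiently large depending only on $\beta$ and $\psi$.

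The main obstacle is the quadratic bound in the folding/near-tangential subcase: one must verify that the set-theoretic length of the folded image $\tilde f(J + \omega_0)$ is still $\gtrsim L|J|^2$, which reduces to showing that the quadratic approximation dominates on the longer of the two halves of $J + \omega_0$ on either side of $\hat x$. Every other step is a direct application of Lemma \ref{lem:boundPeriodCoherence} and the uniform expansion estimate on $\Gc$ from Section 2.
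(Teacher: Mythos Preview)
Your proof is correct and follows essentially the same approach as the paper: split off the longer half of $J+\omega_0$ relative to the critical point $\hat x$, use the quadratic lower bound $|\tilde f_{\omega_0}(J)| \gtrsim L|J|^2$ coming from $|\tilde f'(x)| \geq L K_1 |x-\hat x|$, and then apply Lemma~\ref{lem:boundPeriodCoherence} to pick up a factor $L^{k(1-\beta)}$ over the bound period. Your bookkeeping is in fact slightly cleaner than the paper's displayed computation, which drops the factor $L$ in the first estimate (a typo) and only recovers the stated bound once that factor is restored.
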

\begin{proof}
	It suffices to estimate the length of $\tilde f_{\omega_0}(J)$. For this, let us subdivide $J = J^+ \cup J^-$, 
	where $J^+$ is to the right of the critical point and $J^-$ to the left. WLOG let $J^+$ be the longer of the two intervals,
	so $|J^+| \geq \frac12 |J|$ holds.
	
	Writing $J^+ = [\hat x - \omega_0, \hat x - \omega_0 + b^+], b^+ > 0$ (noting $b^+ \geq \frac12 |J|$), we have
	\[
		(*) = |\tilde f_{\omega_0}(J^+)| = \int_{\hat x}^{\hat x + b^+} | \tilde f'(x)| dx \geq K_1 \int_{0}^{b^+} x \, dx 
		= \frac12 K_1 (b^+)^2 \geq \frac18 K_1 |J|^2
	\]
	Plugging in the lower bound for $|J|$ gives $(*) \geq \frac18 K_1 L^{- 2 (k + 1)(1 - \b) + 2 \gamma}\geq L^{- 2 (k + 1)(1 - \b) + \frac32 \gamma}$. From here, using Lemma \ref{lem:boundPeriodCoherence} we estimate
	\[
	| \tilde f^{k + 1}_\uo(J)| \geq | \tilde f^{k + 1}_\uo (J^+)| \geq L^{- (k + 1) (1 - \b) + \frac32 \gamma} \, . \qedhere
	\]
\end{proof}

Proposition \ref{prop:bkToSpread} now follows from a similar argument to that for Lemma \ref{lem:boundSigma}, 
where $N = N(k, \beta) \in \N$ and the constant $c > 0$ depends on $N$ as well as $L$.
Details are left to the reader.

\section{Itineraries and distortion}

For the remainder of the paper we turn our attention to the proof of Theorem \ref{thm:lyapEst}. 
In essence, this proof will be an elaboration on the idea, used heavily in Section 3.2, 
 that the predominant expansion of $f = f_{L, a}$ has the effect of amplifying the noise $\e$.
 On the other hand, in Section 3.2 and the proof of ergodicity as in Theorem \ref{thm:ergod}, 
 we were able to avoid exerting any precise control on the densities of the conditional laws 
 $P^n(X_0, \cdot | \{ \omega_i, i \neq 0\})$. For our purposes in Section 6, however, 
 we will need some control on these densities, which amounts to controlling distortion of
 the random compositions $f^n_\uo$.
 
 Our objective in Section 4, then, is to establish some control on the distortion of $f^n_\uo$.
 As is typical of systems exhibiting nonuniform expansion, distortion of $f^n_\uo$ for some $n \geq 1$ 
 can only be controlled along sufficiently small intervals $J \subset S^1$ (see, e.g., \cite{WY}). 
 Establishing just how small these intervals need to be is a crucial component of our argument.
 
 In Section 4.1, we formulate \emph{itineraries} for the random dynamics of $f^n_\uo$, a form
 of symbolic dynamics for the trajectories of $f^n_\uo$ with the property (checked in Section 4.2)
 that the distortion of $f^n_\uo$ can be controlled along subintervals with the same itinerary (symbolic sequence)
 out to time $n-1$. 
 
The preceding paragraphs apply equally well to deterministic as well as random compositions
of interval maps-- indeed, the assignment of itineraries to control distortion is an old idea (see the references
in \cite{WY} for more information). Something to keep in mind, however, is that since the condition
$(H3)_{c, k}$ only guarantees bound periods up to length $k$, we lose control of the dynamics
of $f^n_\uo$ upon the first visit to the `worst possible' neighborhood $\Bc^k$ of $\{ f' = 0 \}$.
Thus the itinerary subdivision procedure and and resulting distortion estimates we obtain
 below are only valid up until this first visit to $\Bc^k$. This issue will be addressed in Section 5.

\subsection{Itineraries}

\emph{Throughout, in addition to the preparations in 
Section \ref{subsubsec:basicSetup}, we assume the parameter $\e$ satisfies 
the upper bound \eqref{boundEpsilon}, so that Lemma 
\ref{lem:boundPeriodCoherence} holds. No
lower bound on $\e$ is assumed.
}

\subsubsection*{(A) Partition construction.}

\noindent To start, we define the partition $\Pc$ of $S^1$ as follows. Recall the notation $\Bc^0 = \Ic$.
\begin{itemize}
	\item $\Pc|_{\Gc}$ is the partition of $\Gc$ into connected components.
	\item To define $\Pc|_{\Bc^l}, 0 \leq l < k$, start by cutting $\Bc^l$ into connected components. For each such component $J$, $\Pc|_{J}$ is defined as any
	partition of $J$ into intervals of length 
	\[
	\in [ (l + 1)^{-2} L^{- \frac{l + 3}{2} - \b}, 2  (l + 1)^{-2} L^{- \frac{l + 3}{2} - \b} ] \, .
	\]	\item $\Pc|_{\Bc^k}$ is the partition of $\Bc^k$ into connected components.
\end{itemize}
\noindent We write $\Pc_\omega$ for the partition of $S^1$ with atoms of the form $C - \omega, C \in \Pc$. Abusing notation somewhat, we regard $\Pc, \Pc_\omega$ as partitions of $\R$, extended by periodicity.

\begin{defn}
For a bounded, connected interval $I \subset S^1$ (or $\subset \R$) which is not a singleton, we define the partition $\Pc_\omega(I)$ of $I$ as follows. To start, form $ \Pc_\omega|_{I} = \{ J \cap I :  J \in  \Pc_\omega, J \cap I \neq \emptyset\}$, and write $J_1, J_1, \cdots, J_{N}$ for the non-singleton atoms of this partition in increasing order from left to right (note that $N = 1$ is possible).
\begin{itemize}
	\item If $N = 1,2$ or $3$, then set $\Pc_\omega(I) := \{ I \}$.
	\item If $N \geq 4$, then set $\Pc_\omega(I) = \{ J_1 \cup J_2, J_3, J_4, \cdots, J_{N - 2}, J_{N-1} \cup J_N\}$.
\end{itemize}
\end{defn}

We define the \emph{bound period} $p(I)$ of an interval $I$ as follows. First,  $p : S^1 \to \{ 0,\cdots, k\}$ (or $\R \to \{ 0, \cdots, k\}$) is defined by setting $p|_{\Bc^p} := p$ for all $1 \leq p \leq k$, and $p|_{\Ic \cup \Gc} = 0$. Next, for an interval $I \subset S^1$ or $\R$, we define 
\[
p(I) = \max_{x \in I} p(x) \, .
\]
For $\omega \in [- \e, \e]$, we define $p_\omega(\cdot) = p(\cdot - \omega)$. 

\begin{rmk}\label{rmk:smallAtoms}
For an atom $C \in \Pc$ or $\Pc_\omega$, write $C^+$ for the union of $C$ with its two adjacent atoms. Observe that for any interval $I$, we have that each atom $J \in \Pc_\omega(I)$ 
is contained in $C^+$ for some $C \in \Pc_\omega(I)$. By this line of reasoning, for any $J \in \Pc_\omega(I)$ with $p = p(J) \in \{ 1,\cdots, k-1\}$, we have the estimate 
\[
|J| \leq 6 p^{-2} L^{- \frac{p + 2}{2} - \b} \, .
\]
Similarly, if $J \in \Pc_\omega(I)$, $J \cap \Bc^k_\omega \neq \emptyset$ (i.e. $p(J) = k$) then $|J| \leq 3 \max\{ 1, K_1^{-1}\} L^{- \frac{k}{2} - \b}$.

For a lower bound: if in the above setting we have that there are at least two distinct atoms in $\Pc_\omega(I)$, then any atom $J \in \Pc_\omega(I)$ with $p = p_\omega(J) > 0$ must contain an atom $C \in \Pc_\omega|_{\Bc^p}$. Thus
\[
|J| \geq (p + 1)^{-2} L^{- \frac{p + 3}{2} - \b} \, .
\]
\end{rmk}

\begin{rmk}\label{rmk:extendBoundPeriods}
Fix a sample $\uo \in \Omega$ and let $J$ be a connected interval contained in $C^+$ for some $C \in \Pc_{\omega_0}$. If $p := p_{\omega_0}(J) > 0$, then 
\[
\tilde f^i_\uo(J) \subset \Gc 
\quad \text{ for all } 1 \leq i \leq p \, ,
\]
even though $J$ is not necessarily a subset of $\Bc^p_{\omega_0}$. This is because $\Pc|_{\Bc^{p-1}}$-atoms are small enough so that $J \subset B(- \frac{p + \b}{2})$ must hold, Lemma \ref{lem:boundPeriodCoherence} implies that $f^i_\uo(B(- \frac{p + \b}{2})) \subset \Gc$ for all $1 \leq i \leq p$ and all samples $\uo$. Note, in particular, that $\tilde f^i_\uo(J)$ meets at most one component of $\Gc$ for each $1 \leq i \leq p$, hence $\Pc_{\omega_{i }} ( \tilde f^i_\uo(J)) = \{ \tilde f^i_\uo(J) \}$.
\end{rmk}




%

\subsubsection*{(B)  Time-$n$ itineraries for an interval $I \subset S^1$.}

Let $I \subset S^1$ be an interval (which we regard as a subset of $\R$) and fix a sample $\uo \in \Omega$. For each time $i \geq 1$, we
define a partition $\Qc_i = \Qc_i(I; (\omega_0, \cdots, \omega_i))$ of $I$, the atoms of which correspond to points in $I$ with the same itinerary for the map $\tilde f^{i + 1}_\uo$. 

The definition is inductive. To start, we define $\Qc_0 = \Pc_{\omega_0}( I)$. Assuming $\Qc_0, \Qc_1, \cdots, \Qc_i$ have been constructed, for each $C_i \in \Qc_i$ we define $\Qc_{i +1} \geq \Qc_i$ as follows\footnote{{ Here, for two partitions $\zeta, \xi$, we write $\zeta \leq \xi$ if each atom of $\zeta$
is a union of $\xi$-atoms.}}:
\[
\Qc_{i + 1}|_C = (\tilde f^{i + 1}_\uo)^{-1} \big(\Pc_{\omega_{i + 1}} (\tilde f^{i+1}_\uo(C_i)) \big)  \, .
\]

In what follows, we will only attempt to keep track of itineraries until a first ``near visit'' to the set $\Bc^k$. Precisely, we define
 a `terminating' stopping time $\tau = \tau[I] : I \times \Omega \to \Z_{\geq 0} \cup \{ \infty\}$ as follows:
\[
\tau (x, \uo) =  \min\{ i \geq 0 : f^i_\uo (C_i(x)) \cap \Bc^k_{\omega_{i }} \neq \emptyset \} \, .
\]
Here, $C_i(x)$ denotes the $\Qc_i$-atom containing $x$. Notice that $\tau$ is adapted to $(\Qc_i)_i$, i.e., $\{ \tau > i\}$ is a union
of $\Qc_i$-atoms for each $i \geq 0$. In particular, $\{ \tau > i\}$ depends only on $\omega_0, \cdots, \omega_i$.




\subsubsection*{(C) Bound and free periods of an itinerary}

Fix $n \geq 1$ and $C_n \in \Qc_n$ such that $\tau|_{C_n} \geq n$. For each $i < n$, let $C_i \in \Qc_i$ denote the atom containing $C_n$. For 
$1 \leq i \leq n$, we write $I_i = \tilde f^i_\uo(C_i)$.

Define
\begin{gather}\label{defineTJ}\begin{gathered}
t_1 = \min \{ n \} \cup \{ i \geq 0 :  I_i  \cap \Bc_{\omega_{i }} \neq \emptyset \} \, ,  \quad \text{and} \\
t_j = \min \{ n \} \cup \{ i > t_{j-1} : I_i \cap \Bc_{\omega_{i }} \neq \emptyset \} \,  \quad \text{ for } j \geq 2 \, ,
\end{gathered}\end{gather}
and let $q \geq 0$ be the index for which $t_{q + 1} = n$. For $1 \leq j \leq q$, define
\begin{align}\label{definePJ}
p_j = p_{\omega_{t_j }}( I_{t_j}) \, .
\end{align}
At time $t_j, 1 \leq j \leq q$, the itinerary $C_n$ initiates a bound period of length $p_j$ (Remark \ref{rmk:extendBoundPeriods}); in particular,
 $t_j + p_j < t_{j +1}$ for all $1 \leq j < q$. We say that $C_n$ is \emph{bound at time $t$} if $t \in [t_j + 1, t_j + p_j]$ for some $1 \leq j < q$ and
 that $C_n$ is \emph{free at time $t$} if it is not bound at time $t$.

By Remark \ref{rmk:extendBoundPeriods} and the fact that $\tau|_{C_n} \geq n$, we have the following.
\begin{lem}\label{lem:fullBoundPer}
Let $1 \leq i \leq n$ and assume $C_n \in \Qc_n$ is such that $\tau|_{C_n} \geq n$.
\begin{itemize}
\item[(a)] If $C_n$ is free at time $i$, then
\[
|(f^i_\uo)'|_{C_n}| \geq L^{i (\frac12 - \b)} \, .
\]
\item[(b)] If $C_n$ is bound at time $i$, i.e., $i \in [t_j + 1, t_j + p_j]$ for some $1 \leq j \leq q$, then
\[
|(f^i_\uo)'|_{C_n}| \geq L^{t_j (\frac12 - \b) + (1 - \b)(i - (t_j + 1)) - \frac{p_j - 1}{2} - \b} \, .
\] 
In this case, $C_{t_j} = C_{t_j + 1} = \cdots = C_{t_j + p_j} = C_i$ and $C_n$ is free at time $t_j + p_j + 1$. Note that
$C_{t_j + p_j + 1} \subsetneq C_i$ is possible.
\end{itemize}
\end{lem}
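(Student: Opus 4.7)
The plan is to prove (a) and (b) simultaneously by strong induction on $i$, exploiting the natural decomposition of $\{0,1,\dots,i-1\}$ into pure free steps (on which $I_l \subset \Gc_{\omega_l}\cup\Ic_{\omega_l}$, hence $|f'_{\omega_l}|_{I_l}|\geq L^{1/2-\beta}$) and bound blocks $\{t_j,t_j+1,\dots,t_j+p_j\}$. The ingredients I plan to use are the derivative bounds from Section 2.1, Remark \ref{rmk:extendBoundPeriods} (which pins $\tilde f^l_\uo(C_{t_j}) \subset \Gc$ for $l\in[t_j+1,t_j+p_j]$ and guarantees no further refinement of $\Qc_\bullet$ during a bound period), and the hypothesis $\tau|_{C_n}\geq n$, which forces $p_j \leq k-1$ for all $j \leq q$ so that the derivative bound $L^{-(p_j-1)/2-\beta}$ on $\Bc^{p_j}_\omega$ is actually available (recall the paper has no lower bound for $|f'|$ on $\Bc^k$).

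For the base case $i \leq t_1$, the definition of $t_1$ yields $I_l \cap \Bc_{\omega_l} = \emptyset$ for every $l < t_1$, so $I_l \subset \Gc_{\omega_l} \cup \Ic_{\omega_l}$ and the chain rule gives (a) directly. For the inductive step yielding (b) at $i \in [t_j+1, t_j+p_j]$: I apply the inductive hypothesis at the free time $t_j$ to get $|(f^{t_j}_\uo)'|_{C_n}| \geq L^{t_j(1/2-\beta)}$, multiply by the $\Bc^{p_j}$-factor $L^{-(p_j-1)/2-\beta}$ contributed at time $t_j$, and then by $|f'_{\omega_l}|_{I_l}| \geq L^{1-\beta}$ for $l = t_j+1, \dots, i-1$ (with $I_l \subset \Gc_{\omega_l}$ by Remark \ref{rmk:extendBoundPeriods}); the exponents sum to exactly the bound claimed in (b). The equality $C_{t_j} = \cdots = C_{t_j+p_j} = C_i$ follows from the other assertion of Remark \ref{rmk:extendBoundPeriods}, namely $\Pc_{\omega_l}(I_l) = \{I_l\}$ during the bound period, which prevents any refinement in the pullback $\Qc_{l+1}|_{C_l} = (\tilde f^{l+1}_\uo)^{-1}(\Pc_{\omega_{l+1}}(I_{l+1}))$.

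To close part (a) at a free time $i > t_1$, I work block by block. At the first free time $t_j + p_j + 1$ after a bound block, I multiply the (b)-bound at $i = t_j+p_j$ by one additional $\Gc$-factor $L^{1-\beta}$ (available since $I_{t_j+p_j} \subset \Gc$); the exponent simplifies algebraically to exactly $(t_j+p_j+1)(1/2-\beta)$. Between blocks, each purely free step $l \in (t_j+p_j, t_{j+1})$ contributes $\geq L^{1/2-\beta}$. Telescoping across all completed bound blocks preceding $i$ yields $|(f^i_\uo)'|_{C_n}| \geq L^{i(1/2-\beta)}$. The main obstacle is really just the bookkeeping showing that the initial derivative deficit $L^{-(p_j-1)/2-\beta}$ at the entrance of each bound block is exactly offset by the $p_j$ subsequent $L^{1-\beta}$ factors, yielding a net $L^{(p_j+1)(1/2-\beta)}$ across the block, i.e., the "free rate" per step. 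Globally the derivative therefore grows at the free rate regardless of how bound periods are distributed, and the lopsided error term in (b) is present only because at times strictly inside a bound block the compensating $\Gc$-factors have not yet all been accrued.
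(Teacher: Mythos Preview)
Your proposal is correct and takes essentially the same approach as the paper: the paper does not give an explicit proof of this lemma, but merely remarks that it follows from Remark~\ref{rmk:extendBoundPeriods} and the hypothesis $\tau|_{C_n}\geq n$, together with the derivative bounds on $\Gc,\Ic,\Bc^l$ established in Section~2. Your write-up is precisely the unpacking of that remark, and your block-by-block bookkeeping---in particular the verification that the net exponent across a bound block is $(p_j+1)(\tfrac12-\beta)$---matches the computation already sketched in the discussion following Lemma~\ref{lem:boundPeriodCoherence}.
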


\subsection{Distortion estimates}

%


Let $I \subset S^1$ be a connected interval, $\uo \in \Omega$ a sample. Assume that the partitions $(\Qc_i)_{i \geq 0}, \Qc_i = \Qc_i(I; (\omega_0, \cdots, \omega_i))$ and the stopping time $\tau = \tau[I]$ have
been constructed as in Section 4.1. Here we prove a time-$n$ distortion estimate for trajectories with the same time-$n$ itineraries, i.e., belonging to the same $\Qc_n$-atom.

Our approach to distortion estimates is inspired from the treatment in \cite{WY},
which in turn is a version of estimates first appearing in \cite{BC1,BC2}.

%
%


\begin{prop}\label{prop:distortion}
For all $L$ sufficiently large, the following holds. 
Let $n \geq 1$. Assume $C_n \in \Qc_n$ is free at time $n$ and  $\tau|_{C_n} \geq n$.
Let $x, x' \in C_n$. Then,
\[
\frac{(\tilde f^n_\uo)'(x)}{(\tilde f^n_\uo)'(x')} \leq e^{K_2 L^{-\frac12} + 4 \| \psi'' \|_{C_0} L^{2 \b} |\tilde f^n_\uo x - \tilde f^n_\uo x'|} \, .
\]
\end{prop}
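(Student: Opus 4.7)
The approach is a standard telescoping-plus-MVT estimate, with delicacy in handling the itinerary structure of $C_n$. I will first write
$$\log \frac{(\tilde f^n_\uo)'(x)}{(\tilde f^n_\uo)'(x')} = \sum_{i=0}^{n-1} \bigl(\log|f'(\tilde f^i_\uo x + \omega_i)| - \log|f'(\tilde f^i_\uo x' + \omega_i)|\bigr),$$
and apply the MVT to bound each summand by $\|\psi''\|_{C^0} L \cdot |f'(\xi_i+\omega_i)|^{-1} \Delta_i$, where $\Delta_i := |\tilde f^i_\uo x - \tilde f^i_\uo x'|$ and $\xi_i$ lies on the segment $[\tilde f^i_\uo x, \tilde f^i_\uo x']$. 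Using the free/bound structure $(t_j, p_j)_{j=1}^q$ from Lemma \ref{lem:fullBoundPer}, each index $i \in \{0, \dots, n-1\}$ falls into one of three types: (i) free non-entry times where $I_i \cap \Bc_{\omega_i} = \emptyset$ and $|f'(\xi_i+\omega_i)| \geq L^{1/2-\beta}$; (ii) bound times $i \in [t_j+1, t_j+p_j]$, where Remark \ref{rmk:extendBoundPeriods} and Lemma \ref{lem:boundPeriodCoherence} force $\xi_i+\omega_i \in \Gc$, giving $|f'| \geq L^{1-\beta}$; and (iii) bound-entry times $i = t_j$, where $|f'| \geq L^{-(p_j-1)/2 - \beta}$ only, but $1 \leq p_j < k$ because $\tau|_{C_n} \geq n$.

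For types (i) and (ii), I will bound $\Delta_i \leq \Delta_n / |(\tilde f^{n-i}_{\theta^i\uo})'|_{\tilde f^i_\uo(C_n)}|$ via MVT, and invoke Lemma \ref{lem:fullBoundPer}(a) for the shifted sample $\theta^i\uo$ on $\tilde f^i_\uo(C_n)$ (which inherits the same free/bound structure shifted by $i$ and is free at its last step) to conclude $|(\tilde f^{n-i}_{\theta^i\uo})'| \geq L^{(n-i)(1/2-\beta)}$. Summing the type-(i) contributions yields a geometric series bounded by $\|\psi''\|_{C^0} L^{1/2+\beta} \sum_{m \geq 1} L^{-m(1/2-\beta)} \Delta_n \leq 4\|\psi''\|_{C^0} L^{2\beta} \Delta_n$ for $L$ sufficiently large; type-(ii) contributions are smaller by a factor of $L^{1/2}$.

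For type (iii) the backward $\Delta_n$-contraction is too weak due to the small $|f'(\xi_{t_j})|$, so instead I will use the image-size estimate $\Delta_{t_j} \leq |I_{t_j}|$, combined with Remark \ref{rmk:smallAtoms} to get $|I_{t_j}| \leq 6 p_j^{-2} L^{-(p_j+2)/2-\beta}$, and pull back using Lemma \ref{lem:fullBoundPer}(a) (applied at the free time $t_j$) to obtain $\Delta_{t_j} \leq |C_{t_j}| \leq 6 p_j^{-2} L^{-(p_j+2)/2-\beta-t_j(1/2-\beta)}$. Multiplying by $\|\psi''\|_{C^0} L \cdot L^{(p_j-1)/2+\beta}$, the $p_j$-dependent exponents telescope to $-1/2$, leaving a net $\tfrac{3}{5} p_j^{-2} L^{-1/2 - t_j(1/2-\beta)}$ per entry. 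Summing over $j$ with $t_{j+1} - t_j \geq p_j + 1 \geq 2$ gives total type-(iii) contribution $\leq K_2 L^{-1/2}$ for some absolute constant $K_2$.

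The main obstacle will be the precise cancellation underlying type (iii): the weakness of $|f'|$ at a bound-entry time must be exactly balanced against the smallness of the image-atom $|I_{t_j}|$ forced by the design of $\Pc$ in Section 4.1 (intervals in $\Bc^l$ of length $\sim (l+1)^{-2} L^{-(l+3)/2-\beta}$). It is this balance, together with the summability of $\sum_j L^{-t_j(1/2-\beta)}$, that produces the bounded $K_2 L^{-1/2}$ constant independent of how many bound periods are encountered.
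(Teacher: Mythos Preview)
Your overall framework---telescoping, MVT, and splitting indices into free, bound-interior, and bound-entry types---is the same as the paper's, and your handling of types (i) and (ii) is essentially correct. The gap is in type (iii).

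You assert $\Delta_{t_j} \leq |C_{t_j}| \leq 6 p_j^{-2} L^{-(p_j+2)/2 - \beta - t_j(1/2-\beta)}$. But $\Delta_{t_j} = |\tilde f^{t_j}_\uo x - \tilde f^{t_j}_\uo x'|$ lives in the \emph{image} $I_{t_j} = \tilde f^{t_j}_\uo(C_{t_j})$, not in the preimage atom $C_{t_j}$; the valid containment is $\Delta_{t_j} \leq |I_{t_j}|$, and since $|(\tilde f^{t_j}_\uo)'| \geq L^{t_j(1/2-\beta)}$ one typically has $|C_{t_j}| \ll \Delta_{t_j}$, not the reverse. Remark~\ref{rmk:smallAtoms} gives only $|I_{t_j}| \leq 6 p_j^{-2} L^{-(p_j+2)/2-\beta}$, with \emph{no} $t_j$-dependence. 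With the corrected bound, each type-(iii) summand is $\leq 6\|\psi''\|_{C^0} p_j^{-2} L^{-1/2}$, and since the $p_j$ may all equal $1$ while $q$ can be of order $n$, the sum over $j$ is not uniformly bounded. The claimed factor $L^{-t_j(1/2-\beta)}$ simply is not there.

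The paper does not seek a per-entry decay in $t_j$. Instead it exploits the \emph{forward} growth of the gaps themselves, namely $\Delta_{t_{j'}} \geq L^{(t_{j'}-t_j)(1/2-\beta)} \Delta_{t_j}$ for $j' > j$ (equation~\eqref{eq:boundJ}). One groups the bound-entry times by the value $p = p_j$, sums within each group $\mathcal K_p$ geometrically (dominated by the last entry $t_{j^*_p}$), applies Remark~\ref{rmk:smallAtoms} only to $|I_{t_{j^*_p}}|$, and finally sums over $p$ using $\sum_p p^{-2} < \infty$. This yields the uniform $K_2 L^{-1/2}$ bound. Your argument is salvageable along these lines, but as written the crucial summability over $j$ fails.
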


We start with a preliminary Lemma.

\begin{lem}\label{lem:shortDist}
Let $L$ be sufficiently large, and let $\eta \in [-\frac34,0]$. Let $y, y' \in S^1, i \geq 1$, and define $J$ to be the interval between $y, y'$.  If $f^j_\uo(J) \subset B(\eta)^c$ for all $0 \leq j < i$, then
\[
\bigg| \log \frac{ (f^i_\uo)'(y)}{(f^i_\uo)'(y')} \bigg| \leq 2 \| \psi''\|_{C^0} L^{-1 - 2 \eta} |\tilde f^i_\uo (y) - \tilde f^i_\uo (y')| \, .
\]
\end{lem}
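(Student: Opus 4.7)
The plan is to apply the standard chain-rule trick for distortion bounds, exploiting the uniform expansion $|f'_\omega|\geq L^{1+\eta}$ that holds on $B(\eta)^c$.

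First, I would pass to the lift and write
\[
\log\frac{(\tilde f^i_\uo)'(y)}{(\tilde f^i_\uo)'(y')} \;=\; \sum_{j=0}^{i-1}\Bigl(\log|f'_{\omega_j}(\tilde f^j_\uo y)| - \log|f'_{\omega_j}(\tilde f^j_\uo y')|\Bigr).
\]
The hypothesis $f^j_\uo(J)\subset B(\eta)^c$ for $0\le j<i$ means each image interval $J_j:=\tilde f^j_\uo(J)$ avoids a neighborhood of $C'_\psi$, so $J_j$ lifts unambiguously to an interval in $\R$ of length $d_j:=|\tilde f^j_\uo y - \tilde f^j_\uo y'|$, and $f_{\omega_j}$ is monotone on $J_j$. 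In particular $d_{j+1}\ge L^{1+\eta} d_j$ for each $j<i$ by the mean value theorem together with the lower bound $|f'_{\omega_j}|\ge L^{1+\eta}$ inherited from \eqref{eq:lowerBoundDer}.

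Next, by the mean value theorem applied to $t\mapsto \log|f'_{\omega_j}(t)|$ on $J_j$,
\[
\bigl|\log|f'_{\omega_j}(\tilde f^j_\uo y)| - \log|f'_{\omega_j}(\tilde f^j_\uo y')|\bigr| \;\le\; \sup_{J_j}\left|\frac{f''_{\omega_j}}{f'_{\omega_j}}\right|\cdot d_j \;\le\; \frac{L\,\|\psi''\|_{C^0}}{L^{1+\eta}}\, d_j \;=\; \|\psi''\|_{C^0}\,L^{-\eta}\, d_j,
\]
using $f''_{\omega_j}=L\psi''(\cdot+\omega_j)$ and the lower bound on $|f'_{\omega_j}|$ on $B(\eta)^c$.

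Finally, iterating $d_{j+1}\ge L^{1+\eta}d_j$ backward from $j=i$ gives $d_j\le L^{-(1+\eta)(i-j)} d_i$, so summing the geometric series,
\[
\sum_{j=0}^{i-1} d_j \;\le\; d_i\sum_{m=1}^{i} L^{-(1+\eta)m} \;\le\; \frac{d_i}{L^{1+\eta}-1} \;\le\; \frac{2\,d_i}{L^{1+\eta}},
\]
valid once $L$ is large enough that $L^{1+\eta}\ge 2$; since $\eta\ge -3/4$ gives $1+\eta\ge 1/4$, the condition $L\ge 16$ suffices. Combining the two displays yields the claimed bound $2\|\psi''\|_{C^0}L^{-1-2\eta}\,d_i$. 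The only potential pitfall is the geometric setup — one must be sure the intermediate images do not wrap around $S^1$ so that the lift distances really add up as arc lengths — but this is immediate from $J_j\subset B(\eta)^c$ and the fact that the components of $B(\eta)^c$ between consecutive critical points are proper arcs on which the lift of $f_\omega$ is a genuine monotone function.
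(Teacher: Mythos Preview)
Your proof is correct and follows essentially the same approach as the paper: chain-rule decomposition, mean-value bound $\|\psi''\|_{C^0}L^{-\eta}d_j$ on each summand, backward geometric estimate $d_j\le L^{-(1+\eta)(i-j)}d_i$, and summation. You are slightly more explicit than the paper in justifying the final geometric-series factor of $2$ via $\eta\ge -\tfrac34$ and $L\ge 16$, and in addressing the lift/wrapping issue.
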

\begin{proof}
Define $y_j = \tilde f^j_\uo y, y_j' = \tilde f^j_\uo y'$. We estimate
\[
(*) := \bigg| \log \frac{ (f^i_\uo)'(y)}{(f^i_\uo)'(y')} \bigg| \leq \sum_{j = 0}^{i-1} \bigg| \log \frac{ (f_{\omega_{j + 1}})'(y_j)}{(f_{\omega_{j + 1}})'(y_j')} \bigg| 
\leq \sum_{j = 0}^{i-1} \frac{L \| \psi'' \|_{C_0}}{ L^{1 + \eta}} |y_j - y_j'| = \| \psi''\|_{C^0} L^{- \eta} \sum_{j = 0}^{i-1} |y_j - y_j'| \, .
\]
We bound $|y_j - y_j'| \leq L^{-(1 + \eta)(i - j)} |y_i - y_i'|$, hence
\[
(*) \leq \| \psi''\|_{C^0} L^{- \eta} \bigg(  \sum_{j = 0}^{i-1} L^{- (1 + \eta)(i - j)} \bigg) |y_i - y_i'| \leq 2 \| \psi''\|_{C^0} L^{-1 - 2 \eta} |y_i - y_i'| \, .
\]
In view of \eqref{eq:lowerBoundDer}, observe that the above estimates can be written in the following alternative form: writing $J_j$ for the interval between $y_j, y_j'$, we have that
\[
\sum_{j = 0}^{i-1} \frac{|J_j|}{d(J_j, C_\psi' - \omega_j)} \leq 2 \| \psi'' \|_{C^0} L^{-1 - 2 \eta} |J_i| \, .
\]
\end{proof}

\begin{proof}[Proof of Proposition \ref{prop:distortion}]
Below, we write $C$ to refer to a generic positive constant; 
the value of $C$ may change from line to line, but always depends only on the function $\psi$.

With $n \geq 1$ and $C_n \in \Qc_n$ fixed and free at time $n$, we adopt the notation of Section 4.1 (C). 
Write $x_i = \tilde f^i_\uo (x), x_i' = \tilde f^i_\uo(x')$. By hypothesis, $x, x'$ belong to the same $\Qc_i$ 
atom $C_i$ for all $0 \leq i \leq n$.  

We decompose
\[
\bigg| \log \frac{(\tilde f^n_\uo)'(x)}{(\tilde f^n_\uo)'(x')}\bigg| \leq \sum_{i = 0}^{n-1} \bigg| \log \frac{\tilde f_{\omega_{i }'}(x_i)}{\tilde f_{\omega_{i }'}(x_i')} \bigg|
\]
Using \eqref{eq:lowerBoundDer}, each summand is bounded by
\[
\bigg| \log \frac{\tilde f_{\omega_{i}}'(x_i)}{\tilde f_{\omega_{i }}'(x_i')} \bigg| \leq C \frac{|J_i|}{d(J_i, C_\psi' - \omega_i)} \, ,
\]
where $J_i$ is the interval from $x_i + \omega_i$ to $x_i' + \omega_i$.

With $t_j, p_j$ as in \eqref{defineTJ},\eqref{definePJ}, we decompose the time interval from 
$0$ to $n$ into the succession of free and bound periods experienced by the atom $C_n \in \Qc_n$ containing $x, x'$: 
\[
0 \leq t_1 < t_1 + p_1 < t_2 < t_2 + p_2 < \cdots < t_q < t_q + p_q < t_{q + 1} := n \, .
\]
We assume going forward that $q \geq 1$, i.e., $C_n$ experiences at least one bound
period. If $q = 0$, then Proposition \ref{prop:distortion} follows easily from Lemma \ref{lem:shortDist}
applied to $\eta = - \frac12 - \b$; details are left to the reader.

We now decompose $\sum_{i = 0}^{n-1}$ as follows:
\[
\sum_{i = 0}^{n-1} \frac{|J_i|}{d(J_i, C_\psi' - \omega_i)} = \sum_{i = 0}^{t_1 -1} + \sum_{j =1 }^q \bigg(  \sum_{i = t_j}^{t_j + p_j} + \sum_{i = t_j + p_j + 1}^{t_{j + 1} - 1} \bigg) = : D_0' + \sum_{j = 1}^q (D_j + D_j')
\]
Above, a summand of the form $\sum_{m}^{m-1}, m \in \N$ is regarded as 
empty and the corresponding summation is defined to be $0$ (as may happen
for some of the $D_j'$ terms).
The $D_j, D_j'$ are estimated separately below.
%

\medskip

Before proceeding, observe that $|J_{t_j + p_j + 1}| \geq L^{(p_j + 1)(\frac12 - \beta)} |J_{t_j}|$ and $|J_{t + 1}| \geq L^{\frac12 - \b} |J_t|$ for all $t$ such that $C_t, C_{t + 1}$ are free. 
In particular, 
\begin{align}\label{eq:boundJ}
|J_{t_{j + 1}}| \geq L^{(t_{j + 1} - t_j) (\frac12 - \beta)} |J_{t_i}|
\end{align}
for all $1 \leq i \leq q$. 

\bigskip

\noindent {\it Bounding $\sum_{j = 1}^q D_j$: } Let $1 \leq j \leq q$. 
\begin{cla}
\[
\sum_{i = t_j + 1}^{t_j + p_j} \frac{|J_i|}{\dist(J_i, C_\psi' - \omega_i)} \leq C L^{2 \b} \frac{|J_{t_j}|}{d(J_{t_j}, C_\psi' - \omega_{t_j})}
\]
\end{cla}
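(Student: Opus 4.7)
I plan to bound the sum by establishing two things simultaneously: (i) during the bound period, each denominator $\dist(J_i, C_\psi'-\omega_i)$ is bounded below by a constant ($\Omega(1)$) because the iterates shadow a postcritical orbit staying in $\Gc$; and (ii) the numerators $|J_{t_j+i}|$ carry a shrinkage factor $L^{-p_j/2-\b}$ relative to $|J_{t_j}|$, produced by the quadratic behavior of $\tilde f$ at a critical point. This shrinkage is then canceled precisely against the upper bound on $d := \dist(J_{t_j}, C_\psi'-\omega_{t_j})$ forced by $J_{t_j}\subset I_{t_j}$ meeting $\Bc^{p_j}_{\omega_{t_j}}$, yielding the claim with substantial room to spare.

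\textbf{Execution.} By Lemma \ref{lem:fullBoundPer}, $C_{t_j}=C_{t_j+p_j}$, hence $I_{t_j+i}=\tilde f^i_{\theta^{t_j}\uo}(I_{t_j})$ for $1\leq i\leq p_j$. Since $I_{t_j}$ is an atom of $\Pc_{\omega_{t_j}}(\cdot)$, it sits inside $C^+$ for some $C\in\Pc_{\omega_{t_j}}$ (Remark \ref{rmk:smallAtoms}), so Remark \ref{rmk:extendBoundPeriods} puts each $I_{t_j+i}$ inside $\Gc$; Lemma \ref{lem:boundPeriodCoherence} further locates each within $L^{-\b/2}$ of $f^i\hat x$, and $(H3)_{c,k}$ then gives $\dist(J_{t_j+i},C_\psi'-\omega_{t_j+i})\geq c/3$ for all $1\leq i\leq p_j$. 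For the numerators, the mean value theorem together with $|\psi'(y)|\leq\|\psi''\|_{C^0}\,d(y,C_\psi')$ gives
\[
|J_{t_j+1}|\leq L\|\psi''\|_{C^0}\cdot\sup_{y\in J_{t_j}+\omega_{t_j}}d(y,C_\psi')\cdot|J_{t_j}|.
\]
Because $I_{t_j}$ meets $\Bc^{p_j}_{\omega_{t_j}}$ and $|I_{t_j}|\leq 6p_j^{-2}L^{-(p_j+2)/2-\b}$ by Remark \ref{rmk:smallAtoms}, the supremum is $\leq 2K_1^{-1}L^{-p_j/2-\b}$, so $|J_{t_j+1}|\leq CL^{1-p_j/2-\b}|J_{t_j}|$. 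For $2\leq i\leq p_j$, the bound $\|f'\|_{C^0}\leq L/10$ from (H4) iterates to $|J_{t_j+i}|\leq(L/10)^{i-1}|J_{t_j+1}|$; summing the geometric series gives $\sum_{i=1}^{p_j}|J_{t_j+i}|\leq C'L^{p_j/2-\b}|J_{t_j}|$.

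\textbf{Combining and main obstacle.} Putting the pieces together,
\[
\sum_{i=t_j+1}^{t_j+p_j}\frac{|J_i|}{\dist(J_i,C_\psi'-\omega_i)}\leq C''L^{p_j/2-\b}|J_{t_j}|=C''L^{p_j/2-\b}\cdot d\cdot\frac{|J_{t_j}|}{d},
\]
and the same fact that $I_{t_j}$ meets $\Bc^{p_j}_{\omega_{t_j}}$ (combined with the length bound from Remark \ref{rmk:smallAtoms}) forces $d\leq 2K_1^{-1}L^{-p_j/2-\b}$; hence $L^{p_j/2-\b}\cdot d\leq 2K_1^{-1}L^{-2\b}$, giving the claim (with an $L^{-4\b}$ margin over what is stated). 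The main technical obstacle is extracting both upper bounds — on $\sup_y d(y,C_\psi')$ in the numerator and on $d$ in the denominator — from the single partition-atom size estimate of Remark \ref{rmk:smallAtoms}; this uses that $p_j<k$, which holds because the stopping condition $\tau|_{C_n}\geq n>t_j$ forbids $I_{t_j}$ from meeting $\Bc^k_{\omega_{t_j}}$. The degenerate case where $J_{t_j}$ straddles a critical point is trivial: then $d=0$ and the right-hand side of the claim is infinite.
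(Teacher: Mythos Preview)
Your proof is correct, and in fact yields a stronger bound ($CL^{-2\beta}$ in place of $CL^{2\beta}$), but the route is genuinely different from the paper's.

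The paper does \emph{not} bound the denominators $\dist(J_i,C_\psi'-\omega_i)$ by a constant. Instead it inserts the postcritical orbit $\hat x_i := \tilde f^{\,i-t_j}_{\theta^{t_j}\uo}(\hat x_{t_j})$ and writes
\[
\frac{|J_i|}{\dist(J_i,C_\psi'-\omega_i)} \;=\; \frac{|x_i-x_i'|}{|x_i-\hat x_i|}\cdot\frac{|x_i-\hat x_i|}{\dist(J_i,C_\psi'-\omega_i)}\,.
\]
The first factor is transported back to time $t_j+1$ by the distortion Lemma \ref{lem:shortDist} (applied to the pair $x,\hat x$ in $\Gc$), and then evaluated there via a second--order Taylor expansion of $\tilde f$ at $\hat x_{t_j}$; this produces the factor $|J_{t_j}|/d(J_{t_j},C_\psi'-\omega_{t_j})$ intrinsically, without ever invoking an \emph{upper} bound on $d(J_{t_j},C_\psi'-\omega_{t_j})$. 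The second factor is then summed using the alternate form of Lemma \ref{lem:shortDist} with $\eta=-\beta$, which is where the $L^{2\beta}$ appears.

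Your argument bypasses Lemma \ref{lem:shortDist} entirely: you use $(H3)_{c,k}$ plus the shadowing of Lemma \ref{lem:boundPeriodCoherence} to get the uniform lower bound $c/3$ on the denominators, iterate the crude bound $\|f'\|_{C^0}\leq L/10$ on the numerators, and then cancel the resulting $L^{p_j/2}$ against the partition-size upper bound on $d$ from Remark \ref{rmk:smallAtoms}. This is shorter and more elementary, and exploits the specific geometry of the partition $\Pc$ (namely, that the scale of $\Bc^{p_j}$ is tied to $L^{-p_j/2-\beta}$). The paper's argument is the more ``classical'' distortion approach: it tracks ratios rather than absolute sizes and would survive in settings where one cannot pin down $d$ from above so precisely. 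Either way the Claim follows.
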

\noindent Assuming the Claim, we now bound $\sum_{j = 1}^q D_j$. For $1 \leq p < k$, let $\mathcal K_p = \{ 1 \leq j \leq q : p_j = p\}$. Let $j^*_p = \max \mathcal K_p$, and observe that $|J_{t_j}| \leq |J_{t_{j^*_p}}| \cdot L^{- (t_{j^*_p} - t_j) (\frac12 - \beta)}$ for all $j \in \mathcal K_p$ by \eqref{eq:boundJ}. Thus
\[
 \sum_{j \in \mathcal K_p} D_j \leq C L^{2 \b} \sum_{j \in \mathcal K_p} \frac{|J_{t_j}|}{\dist(J_{t_j}, C_\psi' - \omega_{t_j})} 
\leq \frac{C L^{2 \b}}{1 - L^{- (\frac12 - \beta)}} \cdot \frac{|J_{t_{j_p^*}}|}{ \frac12 K_1^{-1} L^{- \frac{p + 1}{2} - \b}} \leq C L^{2 \b} \frac{|J_{t_{j_p^*}}|}{ L^{- \frac{p + 1}{2} - \b}} \, .
\]
Here we are using that $\dist(J_{t_j}, C_\psi' - \omega_{t_j}) \geq \frac12 K_1^{-1} L^{- \frac{p + 1}{2} - \b}$ for all $j \in \mathcal K_p$. 
By Remark \ref{rmk:smallAtoms}, we have $|J_{t_{j^*_p}}| \leq 6 p^{-2} L^{- \frac{p + 2}{2} - \b}$. So,
\[
\sum_{j \in \mathcal K_p} D_j \leq C L^{2 \b}
 \frac{ p^{-2} L^{- \frac{p + 2}{2} - \b}}{ L^{- \frac{p + 1}{2} - \b}}  \leq C p^{-2} L^{- \frac12 + 2 \b}
\]
hence
\[
\sum_{j = 1}^q D_j = \sum_{p = 1}^{k-1} \sum_{j \in \mathcal K_p} D_j \leq \sum_{p = 1}^{k-1} C p^{-2} L^{- \frac12 + 2 \b} \leq C L^{- \frac12 + 2 \b} \, .
\]

\begin{proof}[Proof of Claim]
Assume $I_{t_j}$ meets the component of $\Bc_{\omega_{t_j }}$ near $\hat x_{t_j} \in C_\psi' - \omega_{t_j}$; write $\hat x_i = \tilde f^{i - t_j}_{\theta^{t_j} \uo} (\hat x_{t_j})$ for $i > t_j$. Assume, without loss, that 
\begin{align}\label{endpointAnsatz}
|x_{t_j}' - \hat x_{t_j}| \leq |x_{t_j} - \hat x_{t_j}| \, ;
\end{align}
in the alternative case, exchange the roles of $x_i, x_i'$ in what follows. 

For $t_j < i \leq t_j + p_j$, we have
\[
\frac{|J_i|}{\dist(J_i, C_\psi' - \omega_i)} = \frac{|x_i - x_i'|}{|x_i - \hat x_i|} \cdot \frac{|x_i - \hat x_i|}{\dist(J_i, C_\psi' - \omega_i)}
\]
By Lemmas \ref{lem:boundPeriodCoherence} and \ref{lem:shortDist}, we have that the first right-hand factor is
\begin{align}\label{eq:firstFactor}
\leq 2 \frac{|x_{t_j+ 1} - x_{t_j+ 1}'|}{|x_{t_j+ 1} - \hat x_{t_j+ 1}|}
\end{align}
The numerator of \eqref{eq:firstFactor} coincides with $|f_{\omega_{t_j}}'(\zeta)| \cdot |x_{t_j} - x_{t_j}'|$ for some $\zeta \in J_{t_j}$. Moreover, $|f_{\omega_{t_j}}'(\zeta)| = |f_{\omega_{t_j}}''(\zeta')| \cdot |\zeta - \hat x_{t_j}| \leq L \| \psi'' \|_{C^0} |\zeta - \hat x_{t_j}|$ for some $\zeta'$ between $\zeta$ and $\hat x_{t_j}$. By \eqref{endpointAnsatz} 
we have $|\zeta - \hat x_{t_j}| \leq |x_{t_j} - \hat x_{t_j}|$, and so conclude that the numerator of \eqref{eq:firstFactor} is $\leq L \| \psi'' \|_{C^0}\cdot  |x_{t_j} - \hat x_{t_j}| \cdot |J_{t_j}|$. 

For the denominator of \eqref{eq:firstFactor}, we have
$
|x_{t_j+ 1} - \hat x_{t_j+ 1}| = \frac12 |f_{\omega_{t_j}}''(\zeta'')| |x_{t_j} - \hat x|^2
$
for some $\zeta''$ between $x_{t_j}$ and $\hat x$. For $L$ sufficiently large and all $\e$ satisfying \eqref{boundEpsilon}, we have that $\min_{z \in \Nc_\e (\Bc)} |\psi''(z)| \geq \frac12 \min \{ | \psi''(\hat z) |: \hat z \in C_\psi'\} =: c_1$ from (H1), (H2). We have therefore that the denominator of \eqref{eq:firstFactor} is $\geq \frac12 c_1 L |x_{t_j} - \hat x_{t_j}|^2$.

Collecting,
\[
\frac{|J_i|}{\dist(J_i, C_\psi' - \omega_i)} \leq C  \frac{|J_{t_j}|}{\dist(J_{t_j}, C_\psi' - \omega_{t_j})} \cdot \frac{|x_i - \hat x_i|}{\dist(J_i, C_\psi' - \omega_i)} \, ,
\]
since $|x_{t_j} - \hat x_{t_j}|^{-1} \leq d(J_{t_j}, C_\psi' - \omega_{t_j})^{-1}$ by assumption,
and so
\[
\sum_{i = t_j + 1}^{t_j + p_j}  \frac{|J_i|}{d(J_i, C_\psi' - \omega_i)} \leq C \frac{|J_{t_j}|}{\dist(J_{t_j}, C_\psi' - \omega_{t_j})} \bigg( \sum_{i = t_j + 1}^{t_j + p_j} \frac{|x_i - \hat x_i|}{\dist(J_i, C_\psi' - \omega_i)} \bigg)\, .
\]
By Lemma \ref{lem:shortDist} applied to $\eta = - \b$, the parenthetical sum is bounded $\leq C L^{-1 + 2 \b} |x_{t_j + p_j + 1} - \hat x_{t_j + p_j + 1}|$. Since $|x_{t_j + p_j} - \hat x_{t_j + p_j}| \leq L^{- \b/2} \ll 1$ (see the proof of Lemma \ref{lem:boundPeriodCoherence}), we bound $|x_{t_j + p_j + 1} - \hat x_{t_j + p_j + 1}| \leq C L$, hence the parenthetical sum is $\leq C L^{2 \b}$. This completes the proof.
\end{proof}

%

\medskip

\noindent {\it Bounding $\sum_{j = 0}^q D_j'$: } For each $1 \leq j < q$, we have from Lemma \ref{lem:shortDist} applied to $\eta = - \frac12 - \b$ that
\[
D_j' \leq L^{1 - 2 (-\frac12 - \b)} |J_{t_{j + 1}}| = C L^{2 \b} |J_{t_{j + 1}}| \, .
\]


Similarly, we estimate $D_0' \leq C L^{2 \b} |J_{t_1}|$. Since $|J_{t_j}| \leq L^{- (n - t_j)(\frac12 - \beta)} |J_n|$ for all $1 \leq j \leq q$ by \eqref{eq:boundJ}, we conclude 
$\sum_{i = 0}^{q} D_j' \leq C L^{2 \b} |J_n|$. The proof of Proposition \ref{prop:distortion} is now complete.
\end{proof}




\section{Selective averaging process}

We aim to get more refined control on the conditional laws $P^n(X_0, \cdot | \{ \omega_i, i \neq 0\}), n \geq 0$.
Towards this end, the itinerary subdivision procedure in Section 4 applied to $I = X_0 + [- \e, \e]$ 
can be used to control the density of $P^n(X_0, \cdot | \{ \omega_i, i \neq 0\}, X_0 + \omega_0 \in C_n)$ for some $C_n \in \Qc_n$, i.e., conditioning on $X_0 + \omega_0$ belonging to a single subdivision $C_n$ of $\Qc_n$. 
This is only valid, however, up until the first `near visit' to $\Bc^k$, the closest
neighborhood to the critical set $\{ f' = 0 \}$. Afterwards, the material in Section 4 is no longer
valid and we lose control over distortion, hence over the conditioned law $P^n(X_0, \cdot | \{ \omega_i, i \neq 0\})$.

A rough idea of how to proceed is as follows: visits to $\Bc^k$ `spoil' the random parameter $\omega_0$, and so
if $X_m$ comes too close to $\Bc^k$ for some $m \geq 0$, we will `freeze' $\omega_0$ 
(essentially, treat as deterministic) and `smear' (average) in the perturbation $\omega_{m + 1}$, i.e., 
for $n \geq m$, work with the conditional law $P^n(X_0, \cdot | \{ \omega_i, i \neq m + 1\})$.

\bigskip

Let us make all this more precise. Fix $X_0 \in S^1$ and define the Markov chain $(\tilde X_n)$ on $\R$ by $\tilde X_n = \tilde f^n_\uo(X_0) = \tilde f_{\omega_{n-1}}(\tilde X_{n-1})$. We will obtain in this section an increasing filtration $(\Hc_n)_{n \geq 0}$, $\Hc_n \subset \Fc_{n} := \sigma(\omega_0, \omega_1, \cdots, \omega_n)$ (depending also on $X_0$), designed so that the conditional measures
\[
\nu_n(\cdot) := \P(\tilde X_n \in \cdot | \Hc_n)
\]
have the following desirable properties:
\begin{itemize}
\item[(i)] the measures $\nu_n$ are absolutely continuous; 
\item[(ii)] $\rho_n := \frac{d \nu_n}{d \Leb}$ is more-or-less constant on the interval of support $I_n := \operatorname{supp} \nu_n$; and 
\item[(iii)] the intervals $I_n = \operatorname{supp} (\nu_n)$ are, for large $n$, rather long with high probability.
\end{itemize}

In this section, we focus on the construction of $\Hc_n, I_n, \nu_n$ as above; 
property (ii) will fall out as a natural consequence of our construction and the 
distortion estimate in Proposition \ref{prop:distortion}.

\bigskip

The plan is as follows: first, in Section 5.1 we will describe an algorithm constructing the supporting intervals $I_n$ as above, in a way completely parallel to the itinerary construction given in Section 4.1. From this construction, it will be clear when `smearing' in a new $\omega_i$ is necessary: this decision is made according to a sequence $\tau_1 < \tau_2 < \cdots$ of stopping times roughly related to the first arrival to the neighborhood $\Bc^k$ (closely related to the stopping time $\tau$ as in Section 4.1). In Section 5.2 we will construct the filtration $(\Hc_n)$ and then describe the resulting conditional measures $\nu_n$ in Section 5.3.

\bigskip

\emph{ In addition to the preparations in 
Section \ref{subsubsec:basicSetup}, we assume the parameter $\e$ satisfies 
\eqref{boundEpsilon}, so that Lemma \ref{lem:boundPeriodCoherence} holds. No lower bound on $\e$ is assumed.
}

\subsection{The supporting intervals $I_n$}

We define here an interval\footnote{For our purposes, an \emph{interval} is a bounded, connected subset of $\R$, with either open or closed endpoints. Since we care only about $\P$-typical trajectories, we need not specify what to do with endpoints.}-valued stochastic process $(I_n)_{n \geq 1}$ for which $I_n \subset \R$ is $\Fc_{n}$-measurable for all $n$. 

\smallskip

\renewcommand{\tX}{{\tilde X}}


Embed $X_0 =: \tilde X_0 \in \R$ via the identification $S^1 \cong [0,1)$. Throughout, the dependence of the $I_n$ on the sample $\uo = (\omega_i)_{i \geq 0} \in \Omega$ is implicit (keeping in mind that $I_n$ depends on $\omega_i, 0 \leq i \leq n$). 

\bigskip

\noindent {\it Base cases:} We set $I_0 = \tX_0 + [- \e, \e]$. To determine $I_1$, there are two cases:
\begin{itemize}
	\item If $I_0 \cap \Bc^k_{\omega_0} \neq \emptyset$, then define $I_1 = \tilde X_1 + [- \e, \e]$.
	\item Otherwise, form $\Pc_{\omega_1}( \tilde f(I_0) )$ and let $I_1$ be the atom containing $\tilde X_1$.
\end{itemize}
Note that since $\e > 0$ is assumed to satisfy \eqref{boundEpsilon}, we have automatically
that $\Pc(I_0)$ consists of a single atom.

\bigskip

\noindent {\it Inductive step: } Assume the intervals $I_0, I_1, \cdots, I_n$ have been constructed, with $n \geq 1$.

\begin{itemize}
	\item[(a)] If $I_n \cap \Bc^k_{\omega_n} = \emptyset, I_{n-1} \cap \Bc^k_{\omega_{n-1}} = \emptyset$, then form $\Pc_{\omega_{n + 1}}( \tilde f_{\omega_n}(I_n))$ and define $I_{n + 1}$ to be the atom containing $\tilde X_{n + 1}$. 
	\item[(b)] If $I_n \cap \Bc^k_{\omega_n} \neq \emptyset$, then define $I_{n + 1} = \tilde X_{n + 1} + [- \e, \e]$.
	Form $\Pc_{\omega_{n + 2}} \big( \tilde f(I_{n + 1})\big) $ and let $I_{n + 2}$ be the atom containing $\tilde X_{n + 2}$
\end{itemize}
From Lemma \ref{lem:boundPeriodCoherence} and Remark \ref{rmk:extendBoundPeriods}, it is simple to check that cases (a) -- (b) are exhaustive and mutually exclusive. Note in case (b) that $I_{n +1} \subset \Gc_{\omega_{n + 1}}$ holds
(Lemma \ref{lem:boundPeriodCoherence} and \eqref{boundEpsilon}).

\begin{defn}
We define a sequence of $(\Fc_n)$-adapted stopping times $0 =: \tau_0 < \tau_1 < \tau_2 < \cdots$ as follows: for $i > 0$, set
\begin{align*}
\tau_i  =   \min \{ m > \tau_{i - 1} : I_{m} \cap \Bc^k_{\omega_{m}} \neq \emptyset \}  \, .
\end{align*}
\end{defn}
Observe that case (b) above is observed iff $n = \tau_i$ for some $i$.

%

\medskip

As formulated below, between `near visits' to $\Bc^k$ (i.e., the times $\tau_1, \tau_2, \cdots$), 
the procedure defining the $(I_n)$ process is completely parallel to the itinerary construction in Section 4.1.
The proof is straightforward and left to the reader.


\begin{lem}\label{lem:correspond}
Fix $i \geq 0$ and $0 \leq m < n$. 
\begin{itemize}
\item[(a)] On the event $S_{i, m, n} = \{ \tau_i = m , \tau_{i + 1} \geq n\}$, we have that the random interval $I_n$ is given as
\[
I_n = \tilde f^{n-m-2}_{\theta^{m + 2} \uo} \circ \tilde f(\hat C) \, ,
\]
where $\hat C$ is the atom of $\Qc_{n-m-1} (I_{m + 1}; (0, \omega_{m+2}, \cdots, \omega_n))$ containing $\tilde X_{m+1} + \omega_{m+1}$ (recall $I_{m + 1} = \tilde X_{m+1} + [- \e, \e]$).
\item[(b)] On the event $\{ \tau_i = m\}$, we have $I_{m + 1} = \tilde X_{m + 1} + [- \e, \e]$ and
\[
 \tau_{i + 1} = m + \tau[I_{m + 1}](\tilde X_{m + 1} + \omega_{m + 1}, \hat \uo) \, ,
\]
where $\tau[I_{m + 1}]$ is the stopping time 
as defined in Section 4.1 with
 $\hat \uo = (0, \omega_{m + 2}, \omega_{m + 3}, \cdots)$.
\end{itemize}
\end{lem}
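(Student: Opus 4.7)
The plan is to unwind the definitions: the supporting-interval rule in Section 5.1 and the itinerary refinement in Section 4.1 are designed to agree between near-visits to $\Bc^k$, and both claims are bookkeeping consequences of that agreement. The core step is a joint induction on $j\geq 0$ establishing, on the event $\{\tau_i = m,\ \tau_{i+1} \geq m+1+j\}$, the identification
\[
I_{m+1+j} \;=\; \tilde f^{j}_{\hat\uo}(\hat C_j), \qquad \hat C_j \in \Qc_j(I_{m+1};\hat\uo) \text{ is the atom containing } \tilde X_{m+1}+\omega_{m+1}.
\]
Once this is in hand, part (a) follows immediately by setting $j = n-m-1$ and rewriting $\tilde f^{n-m-1}_{\hat\uo} = \tilde f^{n-m-2}_{\theta^{m+2}\uo}\circ \tilde f$, which is valid precisely because $\hat\omega_0 = 0$. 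Part (b) will come from comparing, at each level $j$, the test $I_{m+1+j}\cap \Bc^k_{\omega_{m+1+j}}$ driving $\tau_{i+1}$ with the test $\tilde f^j_{\hat\uo}(\hat C_j)\cap \Bc^k_{\hat\omega_j}$ driving $\tau[I_{m+1}]$.

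For the base cases $j=0,1$, I start from the observation that on $\{\tau_i = m\}$ case (b) of Section 5.1 is triggered at time $m$, directly yielding $I_{m+1} = \tilde X_{m+1}+[-\e,\e]$ and $I_{m+2}$ as the atom of $\Pc_{\omega_{m+2}}(\tilde f(I_{m+1}))$ containing $\tilde X_{m+2}$. The one point that needs honest verification is that $\Pc(I_{m+1})$ is a single atom, so that $\Qc_0(I_{m+1};\hat\uo)=\{I_{m+1}\}$ and $\hat C_0 = I_{m+1}$. I would apply Lemma \ref{lem:boundPeriodCoherence} to the connected component of $B(-(k+\b)/2)$ meeting $I_m$ (which exists since $I_m\cap \Bc^k_{\omega_m}\neq\emptyset$) to place $\tilde X_{m+1}$ within $L^{-\b/2}$ of $f\hat x$ for some critical $\hat x$; then $(H3)_{c,k}$ puts $f\hat x$ at distance at least $c$ from $C_\psi'$, so for $L$ large the entire $I_{m+1}$ sits inside a single component of $\Gc$. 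With $\hat\omega_0 = 0$ and $\hat\omega_1 = \omega_{m+2}$, the refinement rule for $\Qc_1$ then literally pulls back the partition used to cut out $I_{m+2}$, giving $\tilde f(\hat C_1) = I_{m+2}$ as required.

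The inductive step is then a direct comparison. On $\{\tau_{i+1}\geq m+1+j\}$ both $I_{m+j-1}$ and $I_{m+j}$ avoid the corresponding $\Bc^k$-neighborhoods, so case (a) of Section 5.1 applies and $I_{m+1+j}$ is defined as the atom of $\Pc_{\omega_{m+1+j}}(\tilde f_{\omega_{m+j}}(I_{m+j}))$ containing $\tilde X_{m+1+j}$. Substituting the inductive hypothesis $I_{m+j} = \tilde f^{j-1}_{\hat\uo}(\hat C_{j-1})$ and using $\tilde f_{\omega_{m+j}} = \tilde f_{\hat\omega_{j-1}}$, the $\Pc_{\hat\omega_j}$-cut in the refinement formula $\Qc_j|_{\hat C_{j-1}} = (\tilde f^{j}_{\hat\uo})^{-1}\bigl(\Pc_{\hat\omega_j}(\tilde f^j_{\hat\uo}(\hat C_{j-1}))\bigr)$ matches the $\Pc_{\omega_{m+1+j}}$-cut used in case (a) exactly; the atom $\hat C_j$ containing $\tilde X_{m+1}+\omega_{m+1}$ therefore maps under $\tilde f^j_{\hat\uo}$ onto $I_{m+1+j}$, completing the induction and with it part (a).

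For the stopping time in (b), the identification gives $\tilde f^j_{\hat\uo}(\hat C_j)\cap \Bc^k_{\hat\omega_j} = I_{m+1+j}\cap \Bc^k_{\omega_{m+1+j}}$ for every $j\geq 1$, while the base-case containment $I_{m+1}\subset \Gc$ together with $\hat\omega_0=0$ rules out $j=0$ from contributing; reading off the smallest index of nonempty intersection then yields the announced identity between $\tau_{i+1}$ and $\tau[I_{m+1}](\tilde X_{m+1}+\omega_{m+1},\hat\uo)$. The only step that is not pure bookkeeping is the base-case containment $I_{m+1}\subset$ a single component of $\Gc$ — which needs Lemma \ref{lem:boundPeriodCoherence}, $(H3)_{c,k}$, and $L$ large — and everything else is careful index-matching between the two parallel recursions.
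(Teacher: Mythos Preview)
The paper does not actually prove this lemma: immediately before the statement it remarks that the two recursions are ``completely parallel'' and that ``the proof is straightforward and left to the reader.'' Your inductive verification is a correct fill-in of exactly this intended argument --- matching the $(I_n)$-recursion of Section~5.1 step by step against the $(\Qc_j)$-recursion of Section~4.1 via the index shift $\hat\omega_j = \omega_{m+1+j}$ for $j\geq 1$, $\hat\omega_0 = 0$ --- and you correctly isolate the one step that is not pure bookkeeping, namely that $I_{m+1}$ lies in a single component of $\Gc$ (so that $\Qc_0(I_{m+1};\hat\uo)=\{I_{m+1}\}$), which indeed follows from Lemma~\ref{lem:boundPeriodCoherence} together with $(H3)_{c,k}$ as you indicate.
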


\subsection{Filtration $(\Hc_n)$}

We now construct $\Hc_n = \sigma(\Ac_n)$, where the measurable partition $\Ac_n$ on $\Omega$ is defined below. Each $\mathcal A_n$ will consist of $\Fc_n$-measurable atoms, and so will be treated here as a partition on the first $n+1$ coordinates  $(\omega_0, \cdots, \omega_n) \in [- \e, \e]^{n+1}$.

\medskip

To start, we set $\Ac_0 = \{ [- \e, \e]\}$ to be the trivial partition, and hereafter assume $n \geq 1$.  

\smallskip


Continuing: for each $i \geq 0$ and $0 \leq m < n$, the event $S_{i, m, n}$ (notation as in Lemma \ref{lem:correspond})
can be treated as a subset of $[- \e , \e]^{n+1}$ since each $\tau_i$ is a stopping time w.r.t. $\Fc_n = \sigma(\omega_0, \cdots, \omega_n)$ (i.e., we have $\{ \tau_i > n \} \in \Fc_n$ for all $i, n$). Define as well the events $S_{i, n} = \{ \tau_i = n -1 \}$, and observe that the collection 
\[
\mathfrak P_n = \{ S_{i, n} : i \geq 1\} \cup \{ S_{i, m, n} : i \geq 1, 0 \leq m < n \}
\]
is a partition of $[- \e, \e]^{n+1}$. We define $\Ac_n \geq \mathfrak P_n$ on each $\mathfrak P_n$-atom separately.



\begin{itemize}
	\item For each set of the form $S_{i, m, n} \in \mathfrak P_n, i \geq 0, 0 \leq m < n$, we define $\Ac_n|_{S_{i, m, n}}$ to consist of atoms of the form
		\[
			\{ \omega_0 \} \times \{ \omega_1\} \times \cdots \times \{ \omega_{m} \} \times J \times \{ \omega_{m + 2}\} \times \cdots \times \{ \omega_n \} \, ,
		\]
		as $J$ ranges over the atoms of $\Qc_{n - m+1}(I_{m + 1}; (0, \omega_{m+2}, \cdots, \omega_n))$. Here we identify $[- \e, \e]$ with $I_{m + 1} = \tilde X_{m + 1} + [- \e, \e]$ in the obvious way.
	\item On each set $S_{i, n} \in \mathfrak P_n, i \geq 1$, we define $\Ac|_{S_{i, n}}$ to consist of atoms of the form
	\[
	\{ \omega_0\} \times \{ \omega_1\} \times \cdots \times \{ \omega_{n-1}\} \times [- \e, \e] \, .
	\]
\end{itemize}

With $\Ac_n$ completely described, the construction of $\Hc_n := \sigma(\Ac_n)$ is complete. It is not hard to check that
$\Hc_n$ is a filtration, i.e., $\Hc_n \supset \Hc_{n-1}$: to do this, one verifies that the partition sequence $\Ac_n$ is increasing
by inspecting each $\mathfrak B_n$-atom separately.

\bigskip

The following is a straightforward consequence of Lemma \ref{lem:correspond}.
\begin{lem} \label{lem:supportWorks}
	For each $n \geq 1$, the random interval $I_n$ is $\Hc_n$-measurable. Moreover, the measure $\nu_n (\cdot) = \P(\tilde X_n \in \cdot | \Hc_n)$ 
	satisfies $\operatorname{supp}(\nu_n) = I_n$.
\end{lem}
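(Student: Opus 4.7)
The plan is to check both assertions atom by atom over the partition $\Ac_n$ of Section 5.2, using Lemma \ref{lem:correspond} as the central input on each atom.

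For $\Hc_n$-measurability of $I_n$, one verifies that $I_n$ is constant on each atom of $\Ac_n$. On an atom contained in $S_{i,m,n}$ with $m \leq n-2$, Lemma \ref{lem:correspond}(a) gives $I_n = \tilde f^{n-m-2}_{\theta^{m+2}\uo} \circ \tilde f(\hat C)$, and each ingredient on the right --- the frozen coordinates $\omega_j$ for $j \neq m+1$, the interval $I_{m+1} = \tilde X_{m+1} + [-\e,\e]$ (since $\tilde X_{m+1}$ depends only on $\omega_0, \ldots, \omega_m$), and the itinerary atom $\hat C$ --- is part of the atom's specification. On an atom inside $S_{i,n}$, $I_n = \tilde X_n + [-\e,\e]$ is a deterministic function of the frozen $\omega_0, \ldots, \omega_{n-1}$.

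For the support identity, the substantive case is an $S_{i,m,n}$-atom with $m \leq n-2$. Conditioning on such an atom leaves $\omega_{m+1}$ as the sole free coordinate, distributed uniformly on the subinterval $J := \hat C - \tilde X_{m+1} \subseteq [-\e,\e]$. Lemma \ref{lem:correspond}(a) then gives
\[
\tilde X_n \;=\; \tilde f^{n-m-2}_{\theta^{m+2}\uo} \circ \tilde f(\tilde X_{m+1} + \omega_{m+1}),
\]
whose image as $\omega_{m+1}$ ranges over $J$ is precisely $I_n$. The key technical point, and the main obstacle, is to verify that the composition on the right is a homeomorphism $\hat C \to I_n$ --- otherwise folding could cause the image to be smaller than $I_n$. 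For this, the initial map $\tilde f$ is monotone on $I_{m+1}$: applying Lemma \ref{lem:boundPeriodCoherence} with $l=k$ and $i=1$ to the $B(-\tfrac{k+\beta}{2})$-component containing $I_m + \omega_m$ (available since $m = \tau_i$ forces $I_m \cap \Bc^k_{\omega_m} \neq \emptyset$ and, by the structure of $\Pc|_{\Bc^k}$, in fact $I_m + \omega_m$ lies in a single component of $\Bc^k \subset B(-\tfrac{k+\beta}{2})$), the image $\tilde f(I_m + \omega_m)$ --- hence $I_{m+1}$ --- lies well inside $\Gc$ once $L$ is large and $\e$ obeys \eqref{boundEpsilon}. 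The subsequent maps $\tilde f_{\omega_j}$, $m+2 \leq j \leq n-1$, avoid critical points because the defining condition $\tau_{i+1} \geq n$ on $S_{i,m,n}$ translates, via Lemma \ref{lem:correspond}(b), into the itinerary-level bound $\tau[I_{m+1}] \geq n-m$, which prevents any iterate of $\hat C$ from meeting $\Bc^k$. Consequently $\tilde X_n$ is a strictly monotone continuous image of $\omega_{m+1} \sim \text{Uniform}(J)$ onto $I_n$, so $\Supp(\nu_n) = I_n$. The edge case $S_{i,n}$ is a simpler direct verification along the same lines.
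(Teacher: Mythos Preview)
Your proof is correct and follows the paper's approach: the paper offers no argument beyond declaring the lemma ``a straightforward consequence of Lemma \ref{lem:correspond}'', and you are simply unpacking that atom by atom over $\Ac_n$.

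One comment on economy: the monotonicity argument you give is not needed for the support claim itself. Lemma \ref{lem:correspond}(a) already identifies $I_n$ as the \emph{image} $\tilde f^{n-m-2}_{\theta^{m+2}\uo}\circ\tilde f(\hat C)$, and since the conditional law of $\omega_{m+1}$ on the atom is uniform (hence fully supported) on $J$, the pushforward under the continuous map $\hat F_{m,n}$ automatically has support equal to that image --- folding or not. Your worry that ``folding could cause the image to be smaller than $I_n$'' is circular, since $I_n$ \emph{is} that image. The injectivity you establish is, however, exactly what is needed for the diffeomorphism claim in the next (unnumbered) lemma of Section 5.3 and for Corollary \ref{cor:distortion}, so the work is correctly aimed, just placed one lemma early. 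A minor imprecision: $I_m + \omega_m$ need not lie entirely in a single $\Bc^k$-component (by Remark \ref{rmk:smallAtoms} it sits in some $C^+$ and may spill into adjacent $\Bc^{k-1}$-atoms), but it does lie in $B(-\tfrac{k+\beta}{2})$, which is all Lemma \ref{lem:boundPeriodCoherence} requires.
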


\subsection{The conditional measures $\nu_n$}

Let us first describe more transparently what the conditional measures $\nu_n(\cdot) = \P(\tilde X_n \in \cdot | \Hc_n)$ actually are. To start, for $\uo \in S_{i, n}, i \geq 0, n \geq 1$, we have that $\nu_n = \delta_{\tilde X_n} * \nu^\e$ is the uniform distribution on $I_n = \tilde X_n + [- \e, \e]$. The following characterizes $\nu_n$ on the event $S_{i, m, n}, i \geq 0, 0 \leq m < n$:

\begin{lem} 
Let $i \geq 0, 0 \leq m < n$ and condition on the event $S_{i, m, n} = \{ \tau_i = m, \tau_{i + 1} \geq n\}$. Define $\hat F_{m, n} : [-\e, \e] \to \R$ to be the map sending $\omega \mapsto \tilde X_n = \tilde f_{\omega_{n-1}} \circ \cdots \circ \tilde f_{\omega_{m + 2}} \circ \tilde f_\omega(\tX_{m+1})$. 

Let $J \in \Qc_{n - m-1}(\tilde X_{m+1}; (0, \omega_{m+2}, \cdots, \omega_n))$ (regarded as a partition of $[- \e, \e]$) be the atom containing $\omega_{m + 1}$. Then, 
$\hat F_{m, n} : J \to I_n$ is a diffeomorphism, and
\begin{align}\label{eq:altForm11}
\nu_n = \frac{1}{\nu^\e(J)} (\hat F_{m, n})_* (\nu^\e|_J) \, .
\end{align}

\end{lem}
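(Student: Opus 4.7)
The plan is to unpack the construction of $\Hc_n$ given in Section 5.2 on the event $S_{i,m,n}$, show that $\hat F_{m,n}$ restricted to $J$ is a diffeomorphism onto $I_n$, and deduce the pushforward formula by an elementary independence argument. I expect the main technical step to be the verification of monotonicity of $\hat F_{m,n}$ on $J$, which is where the itinerary construction and the stopping-time condition $\tau_{i+1} \geq n$ are used together; everything else is essentially a bookkeeping unwrap of definitions that were engineered precisely so that this identity holds.

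First, by the description of $\Ac_n$ on $S_{i,m,n}$ spelled out in Section 5.2, the $\Hc_n$-atom containing a given $\uo \in S_{i,m,n}$ has the product form
\[
\{\omega_0\} \times \cdots \times \{\omega_m\} \times J \times \{\omega_{m+2}\} \times \cdots \times \{\omega_n\},
\]
where $J$ is exactly the atom of $\Qc_{n-m-1}(I_{m+1}; (0, \omega_{m+2}, \ldots, \omega_n))$ containing $\tX_{m+1} + \omega_{m+1}$, identified with a subinterval of $[-\e,\e]$ via $\omega \leftrightarrow \tX_{m+1} + \omega$. This is the only place where the choice of conditioning coordinates enters the argument.

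Second, I show that $\hat F_{m,n}|_J$ is a diffeomorphism onto $I_n$. Smoothness is immediate from the $C^2$ character of $\tilde f$. For injectivity with non-vanishing derivative, I use Lemma \ref{lem:correspond}(b) to rewrite $\tau_{i+1} \geq n$ as $\tau[I_{m+1}] \geq n-m$. This tells us that for each $0 \leq j < n-m$, the image at time $j$ of the $\Qc_j$-atom containing $\omega_{m+1}$ lies in a single $\Pc_{\omega_{m+1+j}}$-atom drawn from $\Gc \cup \Ic \cup \Bc^1 \cup \cdots \cup \Bc^{k-1}$. By the definition of $\Pc$, atoms in these regions are intervals disjoint from the critical set $C_\psi' - \omega_{m+1+j}$ of $\tilde f_{\omega_{m+1+j}}$ (critical points lie in the interior of components of $\Bc^k_{\omega_{m+1+j}}$, which are precisely what $\tau$ forbids), so $\tilde f_{\omega_{m+1+j}}$ is monotone on each such atom. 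Composing these monotone maps with the initial translation $\omega \mapsto \tX_{m+1} + \omega$ shows that $\hat F_{m,n}$ is monotone, hence injective, on $J$. Lemma \ref{lem:correspond}(a) identifies the image with $I_n$.

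Finally, the pushforward formula is a disintegration argument: on the $\Hc_n$-atom identified above, all coordinates $\omega_j$ with $j \neq m+1$ are frozen while $\omega_{m+1}$ varies over $J$. Since $\P = (\nu^\e)^{\otimes \Z_{\geq 0}}$ is a product of Lebesgue-type factors, the regular conditional distribution of $\omega_{m+1}$ given $\Hc_n$ on this atom is $\nu^\e|_J / \nu^\e(J)$. With the remaining $\omega_j$ fixed, the dependence $\tX_n = \hat F_{m,n}(\omega_{m+1})$ holds as a deterministic identity in the single variable $\omega_{m+1}$, and pushing forward gives $\nu_n = \frac{1}{\nu^\e(J)} (\hat F_{m,n})_* (\nu^\e|_J)$, as claimed. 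The core difficulty, as noted, was the monotonicity step; the rest is direct bookkeeping.
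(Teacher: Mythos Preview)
Your proof is correct and is precisely the case-by-case verification the paper defers to the reader: you unwind the description of the $\Hc_n$-atom on $S_{i,m,n}$, use the adaptedness of $\tau$ to $(\Qc_j)_j$ and the fact that critical points lie only in $\Bc^k$-components to get monotonicity of $\hat F_{m,n}$ on $J$, invoke Lemma~\ref{lem:correspond}(a) to identify the image with $I_n$, and conclude via the product structure of $\P$. There is nothing to add.
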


\noindent The proof is a case-by-case verification of the above formula and is left to the reader.

\medskip

Recall that $J \subset [- \e, \e]$ appearing in \eqref{eq:altForm11} has the property that points in $\tilde X_{m+1} + J$ have the same itinerary under $\tilde f^{n-m-1}_{\theta^{m + 2} \uo} \circ \tilde f$. In that notation, we have that the density $\rho_n = \frac{d \nu_n}{d \Leb}$ at a point $x \in I_n$ is, up to a constant scalar, given by
\[
(\hat F_{m, n})' (\omega) = (f^{n-m-1}_{\theta^{m+2} \uo} \circ f)'(\tilde X_{m+1} + \omega) 
\]
where $\omega \in [- \e, \e]$ is such that $x = \hat F_{m, n} (\omega)$. In view of Proposition \ref{prop:distortion} and Lemma \ref{lem:correspond}, then, we obtain a distortion estimate for the density $\rho_n = \frac{d \nu_n}{d \Leb}$:

\begin{cor}\label{cor:distortion}
Let $n \geq 1$ be such that $I_n$ is free. Then, for all $x, x' \in I_n$, we have the estimate
\begin{align}
		\frac{\rho_n(x)}{\rho_n(x')} \leq \exp\big( K_2 L^{- 1/2} + 4 \| \psi'' \|_{C^0} L^{2 \b} |x - x'|\big)  \, .
\end{align}
\end{cor}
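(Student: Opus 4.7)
The plan is to split into two mutually exclusive scenarios from the inductive construction in Section 5.1: either $n - 1 = \tau_i$ for some $i$ (the ``just-after-reset'' case, i.e., the event $S_{i,n}$), or $\tau_i \leq n - 2$ for the largest $i$ with $\tau_i < n$ (the ``mid-itinerary'' case, i.e., the event $S_{i,m,n}$ with $m \leq n-2$). In the first case, the definition of $\Ac_n$ on $S_{i,n}$ forces $\nu_n$ to be the translate of $\nu^\e$ supported on $I_n = \tX_n + [-\e, \e]$, so $\rho_n$ is constant on $I_n$ and the inequality holds trivially.

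In the mid-itinerary case, I would invoke the lemma characterizing $\nu_n$ via \eqref{eq:altForm11}, which gives $\nu_n = \nu^\e(J)^{-1}(\hat F_{m,n})_*(\nu^\e|_J)$ for the atom $J \in \Qc_{n-m-1}(I_{m+1}; \hat\uo)$ containing $\omega_{m+1}$, with $\hat\uo = (0, \omega_{m+2}, \ldots)$. Since $\nu^\e|_J / \nu^\e(J)$ has constant density $1/|J|$, pushing forward by the diffeomorphism $\hat F_{m,n}: J \to I_n$ yields
\[
\rho_n(\hat F_{m,n}(\omega)) = \frac{1}{|J|\,|(\hat F_{m,n})'(\omega)|} \, .
\]
Writing $\omega, \omega' \in J$ for the preimages of $x, x' \in I_n$, the desired estimate reduces to
\[
\frac{(\hat F_{m,n})'(\omega')}{(\hat F_{m,n})'(\omega)} \leq \exp\bigl(K_2 L^{-1/2} + 4\|\psi''\|_{C^0} L^{2\b} |x - x'|\bigr) \, .
\]

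Now $\hat F_{m,n}$ coincides with the random composition $\tilde f^{n-m-1}_{\hat\uo}$ acting on $\tX_{m+1} + J \subset I_{m+1}$, and the two points $\tX_{m+1} + \omega, \tX_{m+1} + \omega'$ lie in a common atom $\hat C$ of $\Qc_{n-m-1}(I_{m+1}; \hat\uo)$. I would then apply Proposition \ref{prop:distortion} with sample $\hat\uo$, step $n - m - 1$, and atom $\hat C$, using the identity $|\tilde f^{n-m-1}_{\hat\uo}(\tX_{m+1} + \omega) - \tilde f^{n-m-1}_{\hat\uo}(\tX_{m+1} + \omega')| = |x - x'|$ to match the right-hand side. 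Two hypotheses must be checked: that $\hat C$ is free at time $n - m - 1$, which is exactly the translation of the standing hypothesis ``$I_n$ is free'' under the correspondence in Lemma \ref{lem:correspond}(a); and that $\tau[I_{m+1}] \geq n - m - 1$ on $\hat C$, which follows from the conditioning event $\{\tau_{i+1} \geq n\}$ combined with Lemma \ref{lem:correspond}(b), since that lemma identifies $\tau_{i+1} - m$ with $\tau[I_{m+1}]$.

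The only real obstacle is the bookkeeping in translating between the interval-valued process $(I_n)$ of Section 5 and the itinerary partitions $(\Qc_j)$ of Section 4; but this correspondence was precisely the purpose of Lemma \ref{lem:correspond}, and the analytic content of the estimate is entirely contained in Proposition \ref{prop:distortion}. Once the correspondence is invoked, the proof reduces to the change-of-variables computation displayed above.
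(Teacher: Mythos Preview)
Your proposal is correct and follows essentially the same approach as the paper, which merely states that the corollary follows ``in view of Proposition \ref{prop:distortion} and Lemma \ref{lem:correspond}'' after writing out the pushforward description of $\nu_n$. You have supplied the routine change-of-variables and hypothesis-checking details that the paper omits; in particular, your observation that $\rho_n$ is proportional to the \emph{reciprocal} of $|(\hat F_{m,n})'|$ is the correct reading (the paper's phrasing ``up to a constant scalar, given by $(\hat F_{m,n})'(\omega)$'' is imprecise on this point, though it is immaterial for the two-sided ratio bound by the $x \leftrightarrow x'$ symmetry in Proposition \ref{prop:distortion}).
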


\section{Lyapunov exponents}

Finally, we come to the estimation of Lyapunov exponents in Theorem \ref{thm:lyapEst}.
 Throughout, we assume the setup of Section \ref{subsubsec:basicSetup} 
 and that $\e \geq L^{- (2 k + 1)(1 - \b) + \a}$ for some $\a \geq 0$. By Theorem \ref{thm:ergod},
 it follows that there is a unique ergodic stationary measure $\mu$ supported on $S^1$.

\medskip

 By (a version of) the Birkhoff ergodic theorem (see Corollary 2.2 on pg. 24 of \cite{kifer2012ergodic}), we
 have that
 \[
 \lambda = \lim_{n \to \infty} \frac1n \log | (f^n_\uo)'(x) | 
 \]
exists and is constant over $\P$-a.e. $\uo \in \Omega$ and $\mu$-a.e. $x \in S^1$. Since, 
however, $\mu$ is absolutely continuous and supported on all of $S^1$, we can promote
this limit to \emph{every} $x \in S^1$ and $\P$-a.e. $\uo \in \Omega$; details are left
to the reader.


%

\medskip

It remains to estimate $\lambda$ from below, for which we use the following.
\begin{lem}
In the above setting, we have that
\[
\lambda \geq \inf_{x \in S^1} \liminf_{n \to \infty} \frac{1}{n} \E \big( \log |(f^n_\uo)'(x)| \big)
\]
for all $x \in S^1$.
\end{lem}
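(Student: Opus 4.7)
The statement is a passage from almost-sure convergence of $\frac{1}{n}\log|(f^n_\uo)'(x)|$ to $\lambda$ (already established in the paragraphs preceding the lemma) to convergence (or rather, an upper bound on the liminf) of expectations. The essential mechanism is reverse Fatou, which is available because $\log|f'|$ has a uniform upper bound even though it may dive to $-\infty$ at the critical set.

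First I would fix $x \in S^1$ and write, via the chain rule,
\[
Y_n(\uo) \; := \; \frac{1}{n}\log|(f^n_\uo)'(x)| \; = \; \frac{1}{n}\sum_{i=0}^{n-1}\log|f'(X_i + \omega_i)|,
\]
where $X_i = f^i_\uo(x)$. By hypothesis (H4) we have $|f'| = L|\psi'| \leq L/10 \leq L$, so $\log|f'| \leq \log L$ pointwise, and consequently $Y_n \leq \log L$ uniformly in $n$, $\uo$, and $x$. Thus the sequence $\{Y_n\}$ is dominated from above by the constant integrable function $\log L$.

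Next I would invoke the fact, already noted in the text immediately above the lemma, that for every $x \in S^1$ one has $Y_n \to \lambda$ $\P$-almost surely. (This uses ergodicity of $\mu$ and the Birkhoff theorem for the skew product, and is promoted from $\mu$-a.e.\ to every $x$ via absolute continuity and full support of $\mu$.) Applying reverse Fatou to $\log L - Y_n \geq 0$ gives
\[
\limsup_{n \to \infty} \E[Y_n] \; \leq \; \E\!\left[\limsup_{n \to \infty} Y_n\right] \; = \; \E[\lambda] \; = \; \lambda.
\]
In particular $\liminf_{n \to \infty} \E[Y_n] \leq \lambda$, and taking the infimum over $x \in S^1$ on the left-hand side yields the desired inequality.

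There is really no obstacle to overcome here: the content of the lemma is the observation that the upper bound on $|f'|$ from (H4) is enough to rule out an asymmetric blow-up in the expectations, so that the almost-sure Lyapunov exponent automatically controls the $\liminf$ of the averaged logarithms of the derivatives. The slightly more substantive step, which is relegated to earlier in the paper, is the promotion of a.s.\ convergence to \emph{every} $x$; assuming that, the present lemma is a three-line consequence.
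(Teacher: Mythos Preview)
Your argument is correct, but it takes a different route from the paper's. You fix $x$, use the already-established $\P$-a.s.\ convergence $Y_n \to \lambda$, and apply reverse Fatou in the \emph{sample variable} $\uo$ using the uniform upper bound $Y_n \leq \log L$. The paper instead invokes the $L^1$-Mean Ergodic Theorem for the skew product to write $\lambda = \lim_n \int_{S^1} g_n \, d\mu$ with $g_n(x) = \frac1n \E(\log|(f^n_\uo)'(x)|)$, observes a uniform lower bound $g_n \geq -d(L,\e)$ (coming from the one-step estimate $\E(\log|f'_\omega(x)|) \geq -d(L,\e)$, which uses the absolute continuity of $\nu^\e$), and then applies ordinary Fatou in the \emph{spatial variable} $x$ to obtain $\inf_x \liminf_n g_n \leq \int \liminf_n g_n \, d\mu \leq \lambda$. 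So the two proofs apply Fatou in orthogonal directions and rely on opposite one-sided bounds: yours needs only $|f'| \leq L$, while the paper needs the lower bound on $\E(\log|f'_\omega|)$ that comes from smearing in $\omega$. Your version is arguably more direct, since it reuses the a.s.\ convergence already stated just before the lemma and avoids reinvoking the ergodic theorem; the paper's version, on the other hand, does not depend on the promotion of a.s.\ convergence to every $x$.
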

\begin{proof}
The limit
\[
\lambda = \lim_{n \to \infty} \frac1n \int_{S^1} \E \big( \log |(f^n_\uo)'(x)| \big) \, d \mu(x)
\]
follows from the $L^1$-Mean Ergodic Theorem applied to the skew product $\tau : S^1 \times \Omega \to S^1 \times \Omega$
defined by setting $\tau(x, \uo) = (f_{\omega_0} x, \theta \uo)$, on noting that $\mu$ is a stationary ergodic measure
iff $\mu \otimes \P$ is an ergodic invariant measure for $\tau$ (Theorem 2.1 on pg. 20 in \cite{kifer2012ergodic}). 

As is not hard to check, for all $x \in S^1$ we have $- d(L, \e) \leq \E \big( \log |f_\omega'(x) | \big) \leq \log L$
where $d(L, \e) > 0$ is a constant depending only on $\e, L$. These bounds pass to the averages $g_n := \frac1n \E \big( \log |(f^n_\uo)'(x)| \big)$. Applying Fatou's Lemma to the nonnegative sequence $g_n + d(L, \e)$, we conclude
\[
\inf_{x \in S^1} \liminf_{n \to \infty}  \frac{1}{n} \E \big( \log |(f^n_\uo)'(x)| \big)   \leq
\int_{S^1} \liminf_n g_n \, d \mu
\leq \lim_{n \to \infty} \int_{S^1} g_n \, d \mu = \lambda \, . \qedhere
\]

\end{proof}

The remaining work is to estimate $\liminf_n \frac1n \E(\log |(f^n_\uo)'(x)|)$ for arbitrary $x \in S^1$. 
\begin{prop}\label{prop:LyapSuff}
For all $x \in S^1$, we have
\[
\liminf_{n \to \infty} \frac{1}{n} \E \big( \log  |(f^n_\uo)'(x)| \big) \geq \lambda_0 \log L \, ,
\]
where $\lambda_0 = \min \{ \frac{\a}{k + 1}, \frac{1}{10}\}$.
\end{prop}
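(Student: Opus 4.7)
\emph{Plan.} The plan is to estimate $\E[\log|(f^n_\uo)'(X_0)|]$ directly by decomposing the sum
\[
\log|(f^n_\uo)'(X_0)| = \sum_{j=0}^{n-1}\log|f'_{\omega_j}(\tilde X_j)|
\]
according to the block structure induced by the stopping times $\tau_1<\tau_2<\cdots$ from Section 5.1, i.e., the visits of the interval process $(I_n)$ to the innermost region $\Bc^k$. Each block spans the indices $\{\tau_i,\tau_i+1,\dots,\tau_{i+1}-1\}$, and I would bound the expected contribution of each block separately using the itinerary analysis of Section~4 together with the filtration $(\Hc_n)$ and the conditional measures $\nu_n$ of Section~5.

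\emph{Step 1 (Within-block expansion for $j>\tau_i$).} Applied to the reset interval $I_{\tau_i+1}=\tilde X_{\tau_i+1}+[-\e,\e]$, the itinerary partitions $\Qc_{j-\tau_i-1}$ from Section~4.1 control the derivative pointwise. Lemma~\ref{lem:fullBoundPer} yields, almost surely,
\[
\prod_{j=\tau_i+1}^{\tau_{i+1}-1}|f'_{\omega_j}(\tilde X_j)| \ \ge\ L^{(T_i-1)(1/2-\beta)-C(k,\beta)},
\]
where $T_i=\tau_{i+1}-\tau_i$ and the constant absorbs $O(1)$ slack from incomplete bound periods at the boundary. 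Note that $T_i\ge k+1$ automatically, because Lemma~\ref{lem:boundPeriodCoherence} forces $I_n\subset \Gc_{\omega_n}$ (hence disjoint from $\Bc^k_{\omega_n}$) for $n\in[\tau_i+1,\tau_i+k]$.

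\emph{Step 2 (Loss at the visit $j=\tau_i$).} The factor $\log|f'_{\omega_{\tau_i}}(\tilde X_{\tau_i})|$ can be arbitrarily negative pointwise, and must be controlled in expectation. Conditioning on $\Hc_{\tau_i}$, Lemma~\ref{lem:supportWorks} identifies the law of $\tilde X_{\tau_i}$ as $\nu_{\tau_i}$ supported on $I_{\tau_i}$, and Corollary~\ref{cor:distortion} shows that this density is approximately uniform. Remark~\ref{rmk:smallAtoms} provides the lower bound $|I_{\tau_i}|\gtrsim (k+1)^{-2}L^{-(k+3)/2-\beta}$, so a direct computation using $|f'(y)|\ge K_1L\,d(y,C'_\psi)$ and the elementary estimate $\int_{-r}^{r}\log|s|\,ds = 2(r\log r - r)$ yields
\[
\E\bigl[\log|f'_{\omega_{\tau_i}}(\tilde X_{\tau_i})| \,\big|\,\Hc_{\tau_i}\bigr] \ \ge\ \log L + \log|I_{\tau_i}| - C.
\]
When $\alpha>0$, a refinement using the interval-growth argument of Section~3.2 (specifically the analogue of \eqref{eq:growFreeIntervals}--\eqref{eq:growBoundPeriodInterval} for the $(I_n)$ process) shows $|I_{\tau_i}|$ is at least $L^{-(k+3)/2-\beta+\delta(\alpha)}$ for some gain $\delta(\alpha)>0$ proportional to $\alpha$, improving the loss estimate accordingly.

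\emph{Step 3 (Summing over blocks).} Combining Steps 1--2 and taking unconditional expectations, the contribution from a block of length $T_i$ is at least $[(T_i-1)(1/2-\beta) - D(k,\beta,\alpha)]\log L$ for some $D(k,\beta,\alpha)>0$ absorbing the loss. Since each block has length $T_i\ge k+1$, the number of blocks in $[0,n-1]$ satisfies $M_n\le n/(k+1)$, and one obtains
\[
\E[\log|(f^n_\uo)'(X_0)|] \ \ge\ n\bigl[(1/2-\beta) - \tfrac{D(k,\beta,\alpha)}{k+1}\bigr]\log L - o(n).
\]
Tracking the $\alpha$-dependence of $D$, the right-hand rate is at least $\min\{\alpha/(k+1),\,1/10\}\log L$, giving the desired bound.

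\emph{Main obstacle.} The hardest step is Step~2: obtaining the precise scaling $|I_{\tau_i}|\gtrsim L^{-(k+3)/2-\beta+\delta(\alpha)}$ with $\delta(\alpha)>0$, so that the $\alpha$-slack in $\e\ge L^{-(2k+1)(1-\beta)+\alpha}$ translates linearly into the Lyapunov bound $\alpha/(k+1)\log L$. This requires a careful ``selective averaging'' argument: between consecutive visits to $\Bc^k$, the smearing of $\omega_{\tau_{i-1}+1}$ (the one free coordinate in $\Hc_n$ for $\tau_{i-1}<n<\tau_i$) spreads $I_n$ across the expanding images of an interval of length $\sim\e$, and one must quantify precisely how this spreading propagates through the itinerary subdivision all the way up to the atom $I_{\tau_i}$ at the next visit.
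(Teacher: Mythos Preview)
Your overall architecture---block decomposition by the stopping times $\tau_i$, controlling the loss $T_{\tau_i}$ via $\E[\,\cdot\,|\Hc_{\tau_i}]$ and Corollary~\ref{cor:distortion}, and bounding the remaining summands by the itinerary machinery---is exactly the paper's approach. However, two of your quantitative inputs are off in a way that prevents the arithmetic in Step~3 from closing.

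\smallskip

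\textbf{(i) Step~1 underestimates the gain.} Lemma~\ref{lem:fullBoundPer} only gives $(\tfrac12-\beta)$ per step, but you yourself note that $I_n\subset\Gc_{\omega_n}$ for $n\in[\tau_i+1,\tau_i+k]$. On $\Gc$ the derivative is $\ge L^{1-\beta}$, so those $k$ steps contribute $k(1-\beta)\log L$, not $k(\tfrac12-\beta)\log L$. The paper uses this explicitly (see the treatment of the $IV_j$ terms in Section~6.1(C)). If you only use $(\tfrac12-\beta)$ per step, then in the worst case (no-cutting, see (ii) below) the loss at $\tau_i$ is $\approx (k(1-\beta)-\alpha)\log L$ while the gain over the next $k$ steps is only $\approx \tfrac{k}{2}\log L$, and the net over the block is \emph{negative} for large $k$. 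The extra $\tfrac{k}{2}\log L$ from being in $\Gc$ is exactly what converts the net to $\approx\alpha\log L$ over $k+1$ steps, i.e.\ the rate $\alpha/(k+1)$ in $\lambda_0$.

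\smallskip

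\textbf{(ii) Step~2 misuses Remark~\ref{rmk:smallAtoms}.} The lower bound $|I_{\tau_i}|\gtrsim (k+1)^{-2}L^{-(k+3)/2-\beta}$ from Remark~\ref{rmk:smallAtoms} requires that $\Pc_{\omega_{\tau_i}}(\tilde f_{\omega_{\tau_i-1}}(I_{\tau_i-1}))$ have at least two atoms, i.e.\ that some cutting has occurred. When \emph{no} cutting occurs between $\tau_{i-1}+1$ and $\tau_i$ (the paper's case~(a) in Lemma~\ref{lem:boundIm}), the only available bound is $|I_{\tau_i}|\ge L^{k(1-\beta)}\varepsilon$, coming from the $(1-\beta)$-per-step growth of the reset interval $I_{\tau_{i-1}+1}$ of length $2\varepsilon$ through its $k$-step bound period. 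This is where the $\alpha$-slack in $\varepsilon\ge L^{-(2k+1)(1-\beta)+\alpha}$ enters linearly, not as a ``refinement'' of the Remark~\ref{rmk:smallAtoms} bound but as a separate alternative: the paper's dichotomy is $|I_{\tau_i}|\ge\min\{L^{-1-(\tfrac12+\beta)k-\beta},\,L^{k(1-\beta)}\varepsilon\}$ (Lemma~\ref{lem:boundIm}). Your formulation $|I_{\tau_i}|\gtrsim L^{-(k+3)/2-\beta+\delta(\alpha)}$ is not what actually holds in the no-cutting case, where the interval can be as small as $\approx L^{-(k+1)(1-\beta)+\alpha}$.

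\smallskip

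Once (i) and (ii) are fixed, the computation $\frac{k(1-\beta)-\gamma}{k+1}\ge\lambda_0$ with $\gamma=\max\{(1+\beta)((\tfrac12+\beta)k+2\beta),\,k(1-\beta)-\alpha\}$ (the paper's \eqref{eq:defineGamma0}) goes through. A minor additional point: the first block ($j=1$) must be handled separately with a cruder bound (the paper's Lemma~\ref{lem:crudeBound}), since before the first visit to $\Bc^k$ there is no guaranteed $k$-step bound period to provide the $L^{k(1-\beta)}$ growth factor; this contributes only $O(1)$ and does not affect the $\liminf$.
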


The proof of Proposition \ref{prop:LyapSuff} occupies the remainder of Section 6.

\medskip

{\it Reductions.} We make here some slight modifications to the upper and lower bounds on $\epsilon$ and the
parameter $\b$.
To start, on shrinking the parameter $\b$, we assume
\[
\e \geq L^{- \frac{1 - \b}{1 + \b} k  - (1 - \b) (k+1) + \a } \, .
\]
Second, we can assume without loss that $\e < L^{- \min \{ k-1, \frac12\}}$ as in the hypothesis
 \eqref{boundEpsilon} for Lemma \ref{lem:boundPeriodCoherence}. If not, then we can reduce to this case by 
 a similar line of reasoning as to the reductions in Section 3.2 in the proof of Theorem \ref{thm:ergod},
 to which we refer for details. 
 
 Finally, a minor technical point: we will assume that $k, \b$ satisfy the relation
 \begin{align}\label{eq:technicalKBound}
 	\bigg( \frac{3}{10} - \frac52 \b - \b^2 \bigg) k \geq 2 \b (1 + \b) \, .
 \end{align}
For $k \geq 6$, \eqref{eq:technicalKBound} is automatic for all $\b \in (0,1/10)$, and \eqref{eq:technicalKBound}
while holds for all $k \in \N$ when $\b \in (0,1/100)$. This entails no loss of generality.
 
 \bigskip
 
{\it With $\b$ fixed once and for all, we let $L$ be sufficiently large, in terms of $\b$, and
  take on the assumptions of Section \ref{subsubsec:basicSetup}. The parameter $\e$ is as above, and 
for our choice of $k \in \N$ we assume \eqref{eq:technicalKBound} holds. Finally, the
 constructions of Section 5 (namely, the filtration $\Hc_n$) are applied to the arbitrary initial condition $x = X_0 \in S^1$.}



\subsection{Decomposing the sum}

Fix $n \geq 1$. Define $T_i = \log |\tilde f_{\omega_{i }}' (X_i)|$, $X_i := f^i_\uo(x)$. With $\tau_0 \equiv 0 < \tau_1 < \tau_2 < \cdots$ as in Section 5, define the random index $J \in \Z_{\geq 0}$ to satisfy \[ \tau_J < n \leq \tau_{J + 1} \, ; \] note that $\tau_1 \geq n$ implies $J = 0$ since $\tau_0 := 0$.

We decompose
\[
(*): = \log|(f^n_\uo)'(x)| = \sum_{i = 0}^{n-1} T_i = \sum_{i =0}^{\min\{ \tau_1, n \} - 1} T_i  +
\sum_{j = 1}^\infty \chi_{J \geq j} \bigg( T_{\tau_j} + \sum_{i = \tau_j + 1}^{\min\{ \tau_{j+1}, n \} - 1} T_i \bigg) 
\]
and will bound $ \E (*)$ from below; here, for an event $A$ we write $\chi_A$ for the indicator function of $A$. The main obstacles are the terms $T_{\tau_j}, 1 \leq j \leq J$, which we bound from below using conditional expectations w.r.t. the filtration $(\Hc_n)_n$.
\begin{prop}\label{prop:main2}
Let $j \geq 2$ and condition on the event $ \tau_j = m$. Then,
\begin{align}
\E \big( T_{ m} | \Hc_{m }) \geq - \gamma \log L \, ,
\end{align}
where $\gamma :=  \max\{ (1 + \b) \big(  (\frac12 + \b) k+ 2 \b \big) , k (1 - \b) - \a \}$.
\end{prop}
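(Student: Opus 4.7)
The plan is to reduce $\E(T_m \mid \Hc_m)$ to an integral of $\log |\tilde f'|$ over the $\Hc_m$-measurable interval $I_m$, then exploit the fact that a critical point $\hat x \in C_\psi'$ lies near $I_m + \omega_m$ (since $\tau_j = m$ forces $I_m \cap \Bc^k_{\omega_m} \neq \emptyset$), and finally produce a lower bound on $|I_m|$. Recall $T_m = \log L + \log|\psi'(X_m + \omega_m)|$.

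First I would split into two cases based on whether $\tau_{j-1} = m - 1$ or $\tau_{j-1} < m - 1$. In the latter, $\omega_m$ is $\Hc_m$-measurable, and Lemma~\ref{lem:supportWorks} together with \eqref{eq:altForm11} realizes the conditional law $\nu_m$ of $\tilde X_m$ on $I_m$ as a pushforward from a $\Qc$-atom on $[-\e, \e]$; a change of variable $y = \tilde X_m + \omega_m$ absorbs the Jacobian, which is essentially constant by Corollary~\ref{cor:distortion} (since Remark~\ref{rmk:smallAtoms} gives $|I_m| \leq 3 \max\{1, K_1^{-1}\} L^{-k/2 - \beta}$, so $L^{2\beta}|I_m| \ll 1$). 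In the former case, $\omega_m$ is itself smeared uniformly on $[-\e, \e]$ and direct change of variable suffices. Both cases yield
\[
\E(T_m \mid \Hc_m) \geq \frac{1}{|I_m|} \int_{I_m + \omega_m} \log|\tilde f'(y)| \, dy - O(1).
\]
Combining this with $|\psi'(y)| \geq K_1 |y - \hat x|$ (from \eqref{eq:lowerBoundDer}) and the standard convexity estimate $\frac{1}{|I_m|}\int_{I_m + \omega_m}\log|y - \hat x|\, dy \geq \log|I_m| - (1 + \log 2)$ (minimized when $\hat x$ is centered in $I_m + \omega_m$), I obtain
\[
\E(T_m \mid \Hc_m) \geq \log L + \log|I_m| - C,
\]
reducing the problem to a lower bound on $|I_m|$.

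The crucial step is to lower-bound $|I_m|$ in a way that matches the worse of two complementary scenarios. Let $m' = \tau_{j-1}$, so $|I_{m'+1}| = 2\e$. In scenario (i), where the itinerary of $\hat C$ (notation of Lemma~\ref{lem:correspond}) avoids cutting losses, the length grows by factors $L^{1/2 - \beta}$ per free step and $L^{(p+1)(1/2 - \beta)}$ over a length-$p$ bound period (Lemma~\ref{lem:fullBoundPer}). Unwinding from $m'+1$ to $m$, with the worst case dominated by a single bound period of length up to $k$, yields $|I_m| \geq 2\e \cdot L^{k(1-\beta) - o(1)}$, hence $\log|I_m| \geq -(B + 1)\log L + o(\log L)$ with $B = k(1 - \beta) - \a$. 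In scenario (ii), when cutting does occur, $I_m$ contains a full $\Pc_{\omega_m}$-atom of minimum size $\gtrsim k^{-2} L^{-(k+2)/2 - \beta}$ (a $\Pc|_{\Bc^{k-1}}$ subdivision atom adjacent to a $\Bc^k$ component); together with the distortion-induced overhead and the technical hypothesis \eqref{eq:technicalKBound}, this yields $\log|I_m| \geq -(A + 1)\log L + o(\log L)$ with $A = (1+\beta)((1/2 + \beta)k + 2\beta)$. Since each realization falls into one of the two scenarios, $\log|I_m| \geq -(\gamma + 1)\log L + o(\log L)$ uniformly with $\gamma = \max\{A, B\}$, and combining with the previous display completes the proof.

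The main obstacle lies in scenario (i): carefully tracking expansion through potentially many interleaved bound periods and free steps, verifying that cutting truly does not shrink $|I_m|$ below the claimed bound, and ensuring the worst case is indeed a single length-$k$ bound period. The technical condition \eqref{eq:technicalKBound} on $k$ and $\beta$ is designed precisely to absorb lower-order corrections (from distortion and from the $k^{-2}$ factor in $\Pc$-atom sizes) so that the stated exponents $A$ and $B$ are realized.
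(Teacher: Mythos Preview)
Your outline follows the paper's approach closely: reduce $\E(T_m\mid\Hc_m)$ to an average of $\log|f'|$ over $I_m$ via the distortion control of Corollary~\ref{cor:distortion}, then appeal to a lower bound on $|I_m|$ obtained by a cutting/no-cutting dichotomy. The paper packages the first step as Lemma~\ref{lem:boundIntegralBelow11} and the second as Lemma~\ref{lem:boundIm}. However, your execution of the $|I_m|$ bound has two real gaps.

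\medskip

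\textbf{Scenario (i), no cutting.} You describe the $L^{k(1-\beta)}$ factor as arising from ``the worst case dominated by a single bound period of length up to $k$,'' as though it were a pessimistic estimate extracted from interleaved bound and free periods. This has it backwards. The factor comes from a \emph{guaranteed} initial bound period: since $j\geq 2$, at time $\tau_{j-1}$ the interval $I_{\tau_{j-1}}$ meets $\Bc^k$, so by Lemma~\ref{lem:boundPeriodCoherence} the next $k$ iterates land well inside $\Gc$, forcing $|I_{\tau_{j-1}+k+1}|\geq L^{k(1-\beta)}\cdot 2\e$. After that point, if no further cutting occurs, the length only grows. This is exactly where the hypothesis $j\geq 2$ is used, and why the analogous estimate for $j=1$ (Lemma~\ref{lem:crudeBound}) is much weaker: there is no prior $\tau_{j-1}$ to supply this bound period. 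Your ``main obstacle'' paragraph worries about tracking many interleaved bound periods and verifying cutting does not shrink $|I_m|$, but in the no-cutting case there is nothing to verify---the length is monotone after the initial $k$ steps.

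\medskip

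\textbf{Scenario (ii), cutting.} You only treat the case where the last cut occurs at time $m$ itself, so that $I_m$ contains a full $\Pc_{\omega_m}$-atom. But the last cut can occur at any $i^*$ with $\tau_{j-1}+2\leq i^*\leq m$, and if $i^*<m$ one must then track the image $I_m=\tilde f^{m-i^*}_{\theta^{i^*}\uo}(I_{i^*})$. The paper's Lemma~\ref{lem:boundIm} splits further according to the value $p^*=p_{\omega_{i^*}}(I_{i^*})\in\{0,1,\dots,k\}$: for $p^*=0$ one gets $|I_m|\geq L^{-3/2-\beta}$; for $1\leq p^*\leq k-1$ the subsequent bound period of length $p^*$ yields $|I_m|\geq L^{-1-2\beta k}$; only for $p^*=k$ (which forces $i^*=m$) does one land on your estimate $|I_m|\gtrsim k^{-2}L^{-(k+2)/2-\beta}$. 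Your argument as written does not cover the first two subcases.

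\medskip

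Two smaller points: your initial split into $\tau_{j-1}=m-1$ versus $\tau_{j-1}<m-1$ is unnecessary, since the bound period after $\tau_{j-1}$ forces $\tau_j\geq\tau_{j-1}+k+1$; and the technical condition~\eqref{eq:technicalKBound} is not used in this proposition at all---it enters later, in Section~6.1, when converting $k(1-\beta)-\gamma$ into the final exponent $\lambda_0$.
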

\noindent Proposition \ref{prop:main2} is proved in Section 6.2.

\medskip

We apply Proposition \ref{prop:main2} by replacing the terms $\chi_{J \geq j} T_{ \tau_j}, j \geq 2$ under $\E$ with the conditional expectations\footnote{For a filtration $(\Gc_n)$ and an adapted stopping time $\eta$, we write $\Gc_\eta$ for the \emph{stopped $\sigma$-algebra} consisting of the set of measurable sets $A$ for which $A \cap \{ \eta \leq m \} \in \Gc_m$ for all $m$.}
\[
(*)_j := \E \big( \chi_{J \geq j} T_{\tau_j} | \Hc_{\tau_j} \big) =  \sum_{m = 1}^{n-1} \E \big( \chi_{\tau_j = m} T_m | \Hc_{m } \big) = \sum_{m = 1}^{n-1} \chi_{ \tau_j = m} \cdot \E \big(  T_m | \Hc_{m } \big) \, .
\]
Here, we use that $\{ J \geq j \} = \cup_{m = 1}^{n-1} \{ \tau_j = m \}$ for all $j \geq 1$. By Proposition \ref{prop:main2}, for $j \geq 2$ we have
\[
(*)_j \geq - \gamma \log L \cdot \chi_{J \geq j} \, .
\]

%
%

For the $j = 1$ term, we use the following crude estimate:
\begin{lem}\label{lem:crudeBound}
We have
\[
(*)_1 := \E \big( \chi_{J \geq 1} T_{ \tau_1} | \Hc_{\tau_1} \big) \geq - 2 (2 k + 1) \log L  =: - \gamma_1 \log L \, .
\]

\end{lem}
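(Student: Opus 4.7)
The strategy is to exploit the $\Hc_{\tau_1}$-conditional average over the free parameter $\omega_1$ to tame the logarithmic singularity of $T_m = \log L + \log|\psi'(X_m + \omega_m)|$ near the critical set. On the event $\{\tau_1 = m\}$, the $\Hc_m$-atom lies in $S_{0,0,m}$ and parametrizes $\omega_1$ over a $\Qc_{m-1}$-atom $J \subset [-\e,\e]$ (with all other $\omega_i$'s fixed); writing $Y := X_m + \omega_m$ as a function of $\omega_1 \in J$, its image $Y(J)$ is the translate $I_m + \omega_m$ by Lemma \ref{lem:supportWorks}, and is contained in a component of $\Bc^k$ around some critical point $c$.

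By Proposition \ref{prop:distortion} applied along the itinerary for $I_1$ (treating the last step into $\Bc^k$ separately if needed), the map $\omega_1 \mapsto Y(\omega_1)$ has bounded distortion: since $|I_m| \lesssim L^{-k/2-\beta}$, the factor $\exp(K_2 L^{-1/2} + 4\|\psi''\|_{C^0} L^{2\beta}|I_m|)$ is $\leq 2$ for $L$ large. Consequently the conditional density $\rho_Y$ satisfies $\rho_Y \leq 2/|I_m|$ on $Y(J)$. Combining with $|\psi'(y)| \geq K_1 d(y, c)$ from \eqref{eq:lowerBoundDer} and the elementary inequality
\[
\int_{I_m+\omega_m} \log d(y, c)\,dy \geq |I_m|\bigl(\log(|I_m|/2) - 1\bigr)
\]
(which is tight when $c$ is centered in the interval), one obtains
\[
\E[T_m \mid \Hc_m] \geq \log L + 2\log|I_m| - C
\]
for a constant $C = C(\psi)$.

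It then suffices to prove $|I_m| \geq L^{-(2k+1)}$, which together with the condition on $L$ yields $\E[T_m \mid \Hc_m] \geq -2(2k+1)\log L$. The lower bound on $|I_m|$ is the main technical step: in the inductive construction of $I_n$ through times $1, \ldots, m-1$ (case (a) of Section 5.1 applies throughout on $\{\tau_1 = m\}$), each step either yields an image smaller than a single $\Pc_{\omega_{n+1}}$-atom (so $I_{n+1}$ equals the image) or replaces it with a $\Pc$-atom of size $\geq L^{-(k+2)/2-\beta}$ by Remark \ref{rmk:smallAtoms}; in parallel, the bound-period analysis of Section 2.2 yields average per-step expansion $\geq L^{1/2-\beta}$ in the absence of $\Bc^k$-visits. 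Combining with $|I_0| = 2\e \geq L^{-(2k+1)(1-\beta)+\beta}$, an interval-growth argument of the same flavor as Lemma \ref{lem:boundSigma} and Proposition \ref{prop:bkToSpread} then delivers $|I_m| \geq L^{-(2k+1)}$. This uniform-in-$m$ control of $|I_m|$ is the principal obstacle; the remaining ingredients (distortion, log-averaging) are direct consequences of the framework in Sections 4--5.
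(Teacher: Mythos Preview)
Your overall architecture matches the paper's: reduce $\E(T_m\mid\Hc_m)$ to a log-average over $I_m$ via the distortion control of Corollary~\ref{cor:distortion}, then close with a lower bound on $|I_m|$. The paper argues identically through \eqref{eq:boundImBelow11}, obtaining $\E(T_m\mid\Hc_m)\geq (1+\b)\log(L^{1-\b}|I_m|)$ (your factor $2$ in place of $(1+\b)$ comes from using only the one-sided density bound $\rho_Y\leq 2/|I_m|$; both suffice for the crude target $-2(2k+1)\log L$).

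The genuine gap is your proof of $|I_m|\geq L^{-(2k+1)}$. Appealing to ``an interval-growth argument of the same flavor as Lemma~\ref{lem:boundSigma} and Proposition~\ref{prop:bkToSpread}'' does not work: those results show that the interval process \emph{eventually} becomes long, not that it satisfies a uniform lower bound at the specific (random) time $m=\tau_1$. Your dichotomy ``either $I_{n+1}$ equals the image, or it is a $\Pc$-atom of size $\geq L^{-(k+2)/2-\b}$'' is correct step-by-step, but you then try to combine it with $|I_0|=2\e$ and per-step expansion as if the length were monotone --- it is not, since cutting can shrink $I_n$ and the very next cut may occur before the expansion has recovered anything. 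What is actually needed is the case split the paper carries out in Lemma~\ref{lem:boundImTake2} (parallel to Lemma~\ref{lem:boundIm}): either (a) no cutting occurs on $[2,m]$, in which case $I_m$ is free and $|I_m|\geq L^{m(\frac12-\b)}\cdot 2\e\geq\e$; or (b) some cut occurs, and one takes $i^*$ to be the \emph{last} cut time, so that $I_{i^*}$ contains a full $\Pc$-atom (Remark~\ref{rmk:smallAtoms}) and then expands without further cutting to $I_m$, giving $|I_m|\geq L^{-1-(\frac12+\b)k-\b}$. Both bounds exceed $L^{-(2k+1)}$, so your target is correct, but the argument you sketch does not reach it without this last-cut-time analysis.
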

We prove Lemma \ref{lem:crudeBound} in Section 6.2.

Applying these estimates, we have
\begin{align*}
\E(*) & \geq \E \bigg[  \sum_{i =0}^{\min\{  \tau_1, n \} - 1} T_i + (*)_1 + \chi_{J \geq 1} \sum_{i =  \tau_1 + 1}^{\min\{  \tau_2 , n \} - 1} T_i + \sum_{j = 2}^\infty \bigg( (*)_j  +  \chi_{J \geq j} \sum_{i =  \tau_j + 1}^{\min\{  \tau_{j+1}, n \} - 1} T_i  \bigg)  \bigg] \\
& \geq \E \bigg[ \underbrace{ \sum_{i =0}^{\min\{  \tau_1, n \} - 1} T_i }_{I} + 
\underbrace{\chi_{J \geq 1} \bigg( - \gamma_1 \log L + \sum_{i =  \tau_1 + 1}^{\min\{  \tau_2 , n \} - 1} T_i \bigg) }_{II}
  + \underbrace{ \sum_{j = 2}^\infty \chi_{J \geq j} \cdot \bigg( - \gamma \log L  +   \sum_{i =  \tau_j + 1}^{\min\{  \tau_{j+1}, n \} - 1} T_i  \bigg) }_{III} \bigg] \\
& =: \E[I + II + III] \, .
\end{align*}
To complete the estimate, we decompose according to the events $\{J = K\}, K = 0,1,2,\cdots$. 

\bigskip

\noindent {\bf (A) Estimate of $\E \big( \chi_{J = 0} (I + II + III) \big)$. }

We have $II = III = 0$ and
\[
\E [\chi_{J = 0} \cdot I ] = \E \bigg[ \chi_{J = 0}  \sum_{i = 0}^{n-1} T_i  \bigg] 
\]
Conditioned on $J = 0$, we have $\tau_1 \geq n$ and so Lemma \ref{lem:fullBoundPer}
may be applied (see also Lemma \ref{lem:correspond}). We obtain a lower bound using the worst possible case that $p_{\omega_{n-1}}(I_{n-1}) = k-1$, i.e., $I_{n-1}$ initiates a bound period of length $k-1$ at time $n-1$ (corresponding to $t_j = n-1, p_j = k-1$ In the notation of Lemma \ref{lem:fullBoundPer}(b)). So,
\[
\sum_{i = 0}^{n-1} T_i = \log |(f^n_\uo)'(x_0)| \geq L^{(n-1)(\frac12 - \b) - \frac{k-1}{2} - \b} \, .
\]
We conclude
\begin{align*}
\E[ \chi_{J = 0} \cdot I ] \geq \bigg(  (n-1) \big( \frac12 - \b \big) \log L -( \frac{k-1}{2} + \b) \log L \bigg) \cdot \P (J = 0) \, .
\end{align*}



\bigskip

\noindent {\bf (B) Estimate of $\E \big( \chi_{J = 1} (I + II + III) \big)$. }

Here we have $III = 0$ and
\[
\E [ \chi_{J = 1} \cdot (I + II) ] = \E \bigg[ \chi_{J = 1} \bigg( \sum_{i = 0}^{ \tau_1 - 1} T_i - \gamma_1 \log L + \sum_{i =  \tau_1 + 1}^{n-1} T_i \bigg)  \bigg]
\]
By Lemma \ref{lem:fullBoundPer}(a) we have $\sum_{i = 0}^{\tau_1 - 1} T_i \geq  \tau_1 \cdot \big( \frac12 - \b \big) \log L$. The second summation $\sum_{i =  \tau_1 + 1}^{n-1} T_i$ is estimated as in paragraph (A): we have
\[
\E \bigg[ \chi_{J = 1} \sum_{i =  \tau_1 + 1}^{n-1} T_i \bigg] \geq \bigg(  (n - 2 -  \tau_1) \big( \frac12 - \b \big) \log L - (\frac{k-1}{2} + \b) \log L \bigg) \cdot \P(J = 1) \, ,
\]
and so collecting, we get
\[
\E [ \chi_{J = 1} \cdot (I + II) ] \geq \bigg( (n - 2) \big( \frac12 - \b \big) \log L - \gamma_1 \log L + \big( 1 - \frac{k}{2} - \b \big) \log L \bigg) \cdot \P(J = 1) \, .
\]

\bigskip

\noindent {\bf (C) Estimate of $\E \big( \chi_{J = K} (I + II + III) \big)$ for $K > 1$.}

We bound $\E[\chi_{J = K} \cdot (I + II)]$ as in paragraph (A), obtaining
\[
\E [\chi_{J = K} \cdot (I + II) ] \geq \E \bigg[ \chi_{J = K} \bigg(  (\tau_2 - 1) \big( \frac12 - \b \big) \log L - \gamma_1 \log L \bigg) \bigg] \, .
\]


Conditioned on $\{ J = K\}$ for $K > 1$, the $III$ term has the form
\[
III = \sum_{j = 2}^{K-1} \underbrace{\bigg( - \gamma \log L + \sum_{i =  \tau_j + 1}^{ \tau_{j + 1} - 1} T_i \bigg) }_{IV_j}  
+ \bigg( \underbrace{- \gamma \log L + \sum_{i =  \tau_K + 1}^{n-1} T_i}_{IV_K} \bigg) 
\]
For each summand $IV_j, j \geq 2$, observe that $\tilde X_i \in \Gc$ for each $i = \tau_j + 1, \cdots, \tau_j + k$, hence $\sum_{i = \tau_j + 1}^{\tau_j + k} T_i \geq k (1 - \b) \log L$. If $\tau_j +k + 1 \leq \tau_{j+1} - 1$, then the summands $\tau_j +k + 1 \leq i
\leq  \tau_{j+1} - 1$ are estimated as in Lemma \ref{lem:fullBoundPer}(a). In total,
\[
\E [ \chi_{J = K} \cdot IV_j ] \geq \E \bigg[ \chi_{J = K} \bigg( \big( k (1 - \b) - \gamma \big) \cdot \log L + (  \tau_{j + 1} - 1 -  \tau_j - k) \cdot \big( \frac12 - \b) \log L 
\bigg) \bigg]
\]
Observe that 
\begin{align*}
k (1 - \b) - \gamma &=  \min \{  \a , \big( \frac12 - \frac52 \b - \b^2 \big) k - 2\b (1 + \b)\} \geq \min \{ \a (k + 1), \frac15 k \}
\end{align*}
holds from \eqref{eq:technicalKBound}. Dividing the latter by $k + 1$
 yields an estimate for the average growth rate $\lambda_0$ as follows:
\begin{align}\label{eq:defineGamma0}
\frac{k (1 - \b) - \gamma}{k + 1} \geq \min \{ \a, \frac{1}{10}\} =: \lambda_0 = \lambda_0(\a, k) \, ,
\end{align}
hence
\[
\E [ \chi_{J = K} \cdot IV_j ] \geq \E \big[ \chi_{J = K} ( \tau_{j + 1} -  \tau_j) \cdot \lambda_0 \log L \big] \, .
\]
This telescopes, and so 
\[
\E [\chi_{J = K} \sum_{j = 2}^{K-1} IV_j ] \geq \E \big[ \chi_{J = K} (  \tau_K -  \tau_2) \cdot \lambda_0 \log L \big] 
\]
Using Lemma \ref{lem:fullBoundPer}(b) we bound $IV_K$ from below by
\[
IV_K = - \gamma \log L + \sum_{j = \tau_K + 1}^{n-1} T_i \geq - \gamma \log L + (n - \tau_K - 2) (\frac12 - \b) \log L 
-( \frac{k-1}{2} + \b) \log L
\] 
hence
\[
\E [ \chi_{J = K} \cdot III] \geq \E \bigg[ \chi_{J = K} \bigg(
(n-2 -  \tau_2 ) \cdot \lambda_0 \log L - \gamma \log L - (\frac{k-1}{2} + \b) \log L
 \bigg) \bigg]
\]
and in total,
\[
\E[ \chi_{J = K} (I + II + III)] \geq \bigg( (n-3) \lambda_0 \log L - (\gamma + \gamma_1) \log L - (\frac{k-1}{2} + \b) \log L \bigg) \cdot \P(J = K) \, .
\]

\bigskip
\noindent {\bf Putting it together.}

\medskip

The lower bounds obtained for $K > 1$ as in paragraph (C) are the worst of the three cases examined already, hence
\[
\E (*) = \E[I + II + III] = \sum_{K = 0}^\infty \E[ \chi_{J = K} (I + II + III)] \geq (n-3) \lambda_0 \log L - (\gamma + \gamma_1) \log L - (\frac{k-1}{2} + \b) \log L \, .
\]
On dividing by $n$ and taking $n \to \infty$, we conclude that 
$
\lim_{n \to \infty} \frac1n \E \big( \log|(f^n_\uo)'(x)| \big) \geq \lambda_0 \log L \, ,
$
as desired.

\subsection{Proofs of Proposition \ref{prop:main2} and Lemma \ref{lem:crudeBound}}

Below, $C > 0$ refers to a constant depending only on $\psi$, and may change in value from line to line.

\medskip

We start with the following preliminary estimate. 

\begin{lem}\label{lem:boundIntegralBelow11}
Let $I \subset \Bc$ be any connected interval. Then,
\[
\int_I \log | f'(z) | \, d z \geq |I| \cdot \log (L^{1 - \b} |I|) \, .
\]
\end{lem}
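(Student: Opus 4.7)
The plan is to exploit the explicit pointwise lower bound $|f'(z)| = L|\psi'(z)| \geq L K_1 \, d(z, C_\psi')$ coming from \eqref{eq:lowerBoundDer}. Since $I \subset \Bc = B(-\tfrac12 - \b)$ is a connected interval, it lies in a single connected component of $\Bc$, hence within distance $K_1^{-1} L^{-1/2 - \b}$ of a unique critical point $\hat x \in C_\psi'$. Translating coordinates by $\hat x$, I would write $I = [a,b]$ with $a < b$ and $d(z, C_\psi') = |z|$ for $z \in I$, obtaining
\[
\int_I \log |f'(z)|\, dz \ \geq\ |I| \log(L K_1) + \int_a^b \log|s|\, ds.
\]

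The remaining integral can be evaluated in closed form in each of the three cases $0 \leq a$, $b \leq 0$, or $a < 0 < b$, reducing the lemma to a purely elementary estimate. A short calculus exercise -- substituting $\eta = a/(b-a)$ when $I$ lies on one side of $\hat x$ and using $(1+\eta)\log(1+\eta) - \eta\log\eta \geq 0$, or invoking the entropy bound $s\log s + (1-s)\log(1-s) \geq -\log 2$ for $s \in [0,1]$ when $I$ straddles $\hat x$ -- gives in all three cases the uniform lower bound
\[
\int_a^b \log|s|\, ds \ \geq\ |I|\log|I| - (1 + \log 2)|I|.
\]

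Combining the two displays yields
\[
\int_I \log|f'(z)|\, dz \ \geq\ |I|\log(L^{1-\b}|I|) + |I|\bigl[\log K_1 + \b \log L - 1 - \log 2\bigr],
\]
and the bracketed quantity is non-negative once $L$ is sufficiently large (depending on $\b$ and $\psi$). The only substantive step is the elementary integral estimate just outlined; everything else is direct substitution, and the fact that $|I| \leq K_1^{-1} L^{-1/2-\b}$ is used only to localize $I$ near a single critical point.
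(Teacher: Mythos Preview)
Your proof is correct and follows precisely the approach the paper intends: the paper merely says the lemma is ``a simple consequence of \eqref{eq:lowerBoundDer} and follows on taking $L$ sufficiently large, depending only on $\b$ and $\psi$; details are left to the reader,'' and you have supplied exactly those details. The reduction to $\int_a^b \log|s|\,ds$ via the pointwise bound $|f'(z)| \geq L K_1\, d(z, C_\psi')$ and the elementary case analysis (monotonicity of $(1+\eta)\log(1+\eta) - \eta\log\eta$ on one side, the binary-entropy bound when $I$ straddles $\hat x$) are all correct, and the final absorption of the constant $1 + \log 2 - \log K_1$ into $\b\log L$ is the intended use of ``$L$ sufficiently large.''
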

This is a simple consequence of \eqref{eq:lowerBoundDer} and follows on taking $L$ sufficiently large, depending only on $\b$ and $\psi$; details are left to the reader. 

\begin{proof}[Proof of Proposition \ref{prop:main2}]
Unconditionally, for any $m \geq 0$ the conditional expectation $\E(T_m | \Hc_m)$ is given by
\[
(**) = \int_{I_m} \log |f'_{\omega_m}(z) | \, d \nu_m(z) \, .
\]
by Lemma \ref{lem:supportWorks}.

Conditioning on $\{ \tau_j = m\}$, recall (Remark \ref{rmk:smallAtoms}) that $|I_m| \leq C L^{- \frac{k}{2} - \b}$ since $I_m$ is an atom of $\Pc_{\omega_m}(\tilde f_{\omega_{m-1}}(I_{m-1}))$. Our distortion control on $\rho_m = \frac{d \nu_m}{d \Leb}$ as in Corollary \ref{cor:distortion} along $I_m$ implies $| \log \frac{\rho_m(z)}{\rho_m(z')} | \leq K_2 L^{- 1/2} + 2 K_1^{-1} L^{- \frac{k}{2} + \b} \leq C L^{- 1/2 + \b}$ for $z, z' \in I_m$, hence
\[
(**) \geq (1 + C L^{-1/2 + \b} ) \frac{1}{|I_m|} \bigg( \int_{I_m} \log | f'_{\omega_{m+1}} (z) | \, d z \bigg) \, .
\]


From Lemma \ref{lem:boundIntegralBelow11} applied to $I = I_m$, we conclude
\begin{align}\label{eq:boundImBelow11}
(**) \geq (1 + C L^{-1/2 + \b}) \log (L^{1 - \b} |I_m|) \geq  (1 + \b) \log (L^{1 - \b} |I_m|) \, .
\end{align}

We now bound $|I_m|$ from below.

\begin{lem}\label{lem:boundIm}
On the event $\{ \tau_j = m\}, j \geq 2, m \geq 1$, we have the estimate \[|I_m| \geq \min\{ L^{-1 - (\frac12 + \b) k - \b}, L^{k (1 - \b)} \e\} \, .\]
\end{lem}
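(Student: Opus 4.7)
The plan is to write $n_0 := \tau_{j-1}+1 \leq m$ and analyze the evolution of the intervals $(I_i)_{n_0 \leq i \leq m}$. Starting point: since $\tilde X_{n_0-1} \in I_{n_0-1}$ and $I_{n_0-1}$ meets $\Bc^k_{\omega_{n_0-1}}$ (whose components are $O(L^{-k/2-\b})$-sized by Remark~\ref{rmk:smallAtoms}), the point $\tilde X_{n_0-1}$ sits within $O(L^{-k/2-\b})$ of some critical $\hat x \in C_\psi'$, hence within the component $J_0$ of $B(-(k+\b)/2)$ around $\hat x$. Applying Lemma~\ref{lem:boundPeriodCoherence} together with $(H3)_{c,k}$, I conclude that $\tilde X_{n_0}$ lies within $L^{-\b/2}$ of $f(\hat x) \in \Gc$, so $I_{n_0} = \tilde X_{n_0} + [-\e,\e]$ sits well inside $\Gc_{\omega_{n_0}}$ with $|I_{n_0}| = 2\e$. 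Crucially, so long as no cutting has yet occurred, the iterates satisfy $\tilde f^l_{\theta^{n_0}\uo}(I_{n_0}) \subset \Nc_{L^{-\b/2}+\e}(f^{l+1}\hat x) \subset \Gc$ for $0 \leq l \leq k-1$.

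I then split into two cases depending on whether a \emph{strict cutting} step --- meaning $\Pc_{\omega_i}|_{\tilde f_{\omega_{i-1}}(I_{i-1})}$ has at least four non-singleton atoms --- occurs at some $i \in (n_0, m]$. In Case~1, let $s$ be the largest such index. By the construction of $\Pc_\omega(\cdot)$ (only the two extremal atoms on each side get merged), every atom of the subdivided partition contains at least one full $\Pc$-atom, so $|I_s| \geq (p+1)^{-2} L^{-(p+3)/2 - \b}$, where $p = p_{\omega_s}(I_s) \in \{0,\ldots,k-1\}$. From $s$ to $m$ no further cutting occurs, so $I_m = \tilde f^{m-s}_{\theta^s\uo}(I_s)$. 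If $p \geq 1$, Lemma~\ref{lem:boundPeriodCoherence} and $(H3)_{c,k}$ applied to the bound period of length $p$ initiated at $s$ force $m \geq s+p+1$, since the intermediate images lie in $\Gc$. Because $I_m$ meets $\Bc^k_{\omega_m}$, Remark~\ref{rmk:extendBoundPeriods} forbids $I_m$ from being inside a bound period, so it is free at local time $m - s$; Lemma~\ref{lem:fullBoundPer}(a) then yields $|(\tilde f^{m-s}_{\theta^s\uo})'|_{I_s}| \geq L^{(m-s)(1/2-\b)}$. The worst sub-case, $p = k-1$ and $m = s+k$, gives $|I_m| \geq k^{-2} L^{-(k+2)/2-\b + k(1/2-\b)} = k^{-2} L^{-1-(k+1)\b}$, which dominates $L^{-1-(1/2+\b)k-\b}$ for $L$ sufficiently large. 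In Case~2 (no strict cutting), $I_m = \tilde f^{m-n_0}_{\theta^{n_0}\uo}(I_{n_0})$; the preamble forces $m - n_0 \geq k$ since the first $k-1$ iterates stay in $\Gc$, and each of the first $k$ steps expands by at least $L^{1-\b}$, giving $|I_{n_0+k}| \geq 2\e L^{k(1-\b)}$. Since $I_m$ is again free at local time $m - n_0$, Lemma~\ref{lem:fullBoundPer}(a) applied to the remaining steps supplies a factor $\geq 1$, so $|I_m| \geq 2\e L^{k(1-\b)} \geq L^{k(1-\b)}\e$, matching the second bound.

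The main obstacle I anticipate is verifying Case~1's exponent arithmetic uniformly across every sub-case $p \in \{0,\ldots,k-1\}$ paired with every admissible $m - s \geq p + 1$: the direct computation $|I_m| \geq (p+1)^{-2}L^{-(p+3)/2-\b + (p+1)(1/2-\b)} = (p+1)^{-2}L^{-1-(p+2)\b}$ dominates $L^{-1-(1/2+\b)k-\b}$ for all such $p$ and $L$ large, but requires careful bookkeeping of the bound-period start, the possibly-long free tail, and the fact that $I_m$ lands precisely at the end in $\Bc^k$. A secondary concern is that in Case~2 for $m > n_0 + k$ the image may partially enter $\Bc^l$ for some $l \geq 1$ after $k$ iterations; controlling shrinkage then relies on the free-at-$m$ property together with Lemma~\ref{lem:fullBoundPer}(a), which guarantees that the subsequent derivative factor $L^{(m-n_0-k)(1/2-\b)} \geq 1$.
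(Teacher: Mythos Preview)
Your overall strategy matches the paper's proof closely: split according to whether a nontrivial subdivision (``cutting'') occurs between $\tau_{j-1}+1$ and $m$, and in the cutting case work from the last such time. Case~2 (no cutting) is handled correctly and gives the second term $L^{k(1-\b)}\e$ of the minimum, just as in the paper's case (a).

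There is, however, a genuine gap in your Case~1. You assert that $p = p_{\omega_s}(I_s) \in \{0,\ldots,k-1\}$, but this is false precisely when the last cutting occurs at $s = m$ itself. Since $\tau_j = m$ means $I_m \cap \Bc^k_{\omega_m} \neq \emptyset$, we then have $p_{\omega_m}(I_m) = k$, and your subsequent reasoning (which forces $m \geq s + p + 1$ via a bound period) does not apply because there are no further steps from $s$ to $m$. This is exactly the paper's case (b)(iii), and it is the sub-case that produces the \emph{worst} bound $L^{-1 - (\frac12 + \b)k - \b}$ --- the first term in the stated minimum. In that situation one argues directly: since cutting occurred at time $m$, the interval $I_m$ contains a full $\Pc_{\omega_m}$-atom, and the smallest such atom adjacent to $\Bc^k$ is a $\Pc|_{\Bc^{k-1}}$-atom of length $\geq k^{-2} L^{-(k+2)/2 - \b}$, which one then checks dominates $L^{-1 - (\frac12 + \b)k - \b}$ for $L$ large. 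Your claimed ``worst sub-case'' $p = k-1, m = s+k$ gives the strictly better bound $k^{-2} L^{-1 - (k+1)\b}$, so as written you have not accounted for where the stated bound actually comes from. The fix is short --- just add the sub-case $s = m$, $p = k$ --- but without it the proof is incomplete.
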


Assuming this and plugging in $\e \geq L^{- \frac{1 - \b}{1 + \b} k  - (1 - \b) (k+1) + \a }$, we conclude
\begin{align*}
(**) & \geq (1 + \b) \log \min\{ L^{ - (\frac12 + \b) k -2 \b} , L^{(k+1) (1 - \b)} \e \} \\
& \geq \min \{ (1 + \b) \big( - 2 \b - (\frac12 + \b) \big) , (1 + \b) \big( \a - k \frac{1 - \b}{1 + \b} \big) \} \\
& \geq - \max\{ (1 + \b) \big(  (\frac12 + \b) k+ 2 \b \big) , k (1 - \b) - \a \} \log L =: - \gamma \log L \, .
\end{align*}

To finish the proof of Proposition \ref{prop:main2}, it remains to prove Lemma \ref{lem:boundIm}.
\end{proof}

\begin{proof}[Proof of Lemma \ref{lem:boundIm}]
We distinguish two cases:
\begin{itemize}
	\item[(a)] $I_{i} = f_{\omega_{i-1}}(I_{i-1})$ for each $\tau_{j-1} + 2 \leq i \leq m = \tau_j$
	\item[(b)] $I_i \subsetneq f_{\omega_{i-1}}(I_{i-1})$ for some $\tau_{j-1}+2 \leq i \leq m = \tau_j$.
\end{itemize}
In case (a), we easily have $|I_{\tau_{j-1} + k + 1}| \geq L^{k (1 - \b)} \e$, and since no additional cuts are made, we estimate
\begin{align*}
| I_{m}| 
& = | \tilde f_{\omega_{m-1}} \circ \cdots \circ \tilde f_{\omega_{\tau_{j-1} + k + 1}} (I_{\tau_{j-1} + k + 1} ) | \\
& \geq L^{( m - ( \tau_{j-1} + k + 1))(\frac12 - \b)} |I_{\tau_{j-1} + k + 1}| \geq L^{k (1 - \b)} \e \, .
\end{align*}

In case (b), set $i^* = \max \{ i \leq  \tau_j : I_i \subsetneq \tilde f_{\omega_{i-1}}(I_{i-1}) \}$ (note $i^* = m$ is possible), and note that if $i^* < m$ then
\[
I_m = \tilde f_{\omega_{m-1}} \circ \cdots \circ \tilde f_{\omega_{i^*}}(I_{i^*}) \, .
\]
To bound $|I_{i^*}|$ we split further to the cases (i) $p_{\omega_{i^*}}(I_{i^*}) = 0$, (ii) $p_{\omega_{i^*}}(I_{i^*}) \in \{ 1,\cdots, k-1\}$ and (iii) $p_{\omega_{i^*}}(I_{i^*}) = k$. Note that in all cases, $\Pc_{\omega_{i^*}}(\tilde f_{\omega_{{i^*}-1}}(I_{{i^*}-1}))$ contains at least two elements, hence $I_{i^*}$ contains at least one atom of $\Pc_{\omega_{i^*}}$ (Remark \ref{rmk:smallAtoms}).

In case (b)(i), $I_{i^*} \subset \Ic_{\omega_{i^*}} \cup \Gc_{\omega_{i^*}}$. Either $I_{i^*}$ contains an atom of $\Gc_{\omega_{i^*}}$, in which case $|I_{i^*}|$ is bounded from below by $\frac12 \min\{ d(\hat x, \hat x') : \hat x, \hat x' \in C_\psi' , \hat x \neq \hat x'\}$, or $I_{i^*}$ contains an atom of $\Pc_{\omega_{i^*}}|_{\Ic_{\omega_{i^*}}}$, hence $|I_{i^*}| \geq L^{- \frac{3}{2} - \b}$ (the latter bound being the worse of the two). Since $I_m = I_{\tau_j}$ is free, we conclude $|I_m| \geq |I_{i^*}| \geq L^{- \frac32 - \b}$ from Lemma \ref{lem:fullBoundPer}(a).

In case (b)(ii), we have automatically that $I_{i^*}$ is free and initiates a bound period of length $p^* = p_{\omega_{i^*}}(I_{i^*})$.
Since $0 < p^* < k-1$ by assumption, we cannot have $i^* = \tau_j = m$ (since then $p^* = k$) and so conclude $i^* < \tau_j$ in this case-- indeed, we have $i^* + p^* + 1 \leq m = \tau_j$, since $I_{\tau_j}$ is free. From Remark \ref{rmk:smallAtoms} we have 
\[
| I_{i^*} | \geq (p^* + 1)^{-2} L^{- \frac{p^* + 3}{2} - \b} \geq L^{- \frac{p^* + 3}{2} - \b (p^* + 1)  } \, ,
\]
on taking $L$ large enough so $\b > 2/\log L$. Moreover, since $I_{m} = I_{\tau_j}$ is free, we have
\[
|I_m| \geq |I_{i^* + p^* + 1}| = |\tilde f^{p^*  +1}_{\theta^{i^*} \uo} (I_{i^*})| \geq L^{(p^* + 1)(\frac12 - \b)} |I_{i^*}| 
\geq L^{(p^* + 1) (\frac12 - \b) } \cdot L^{- \frac{p^* + 3}{2} - \b(  p^* + 1) } = L^{-1 - 2 \b (p^* + 1)}
\]
The worst possible case is $p^* = k-1$, and so we conclude $|I_m| \geq L^{-1 - 2 \b k}$ in case (ii).

In case (b)(iii), we have necessarily that $i^* = m = \tau_j$. In the worst case, $I_m$ contains an atom of $\Pc_{\omega_m}|_{\Bc^{k-1}_{\omega_m}}$, and so $|I_m| \geq k^{-2} L^{- \frac{k + 2}{2} - \b} \geq L^{- 1 - (\frac12 + \b) k - \b}$. 
\end{proof}

\bigskip

\begin{proof}[Proof of Lemma \ref{lem:crudeBound}]

Arguing in parallel to the proof of Proposition \ref{prop:main2} (see \eqref{eq:boundImBelow11}) we have, on the
event $\{ \tau_1 = m\}$, the estimate
\[
\E ( T_{m} | \Hc_{m}) \geq (1 + \b)  \log (L^{1 - \b} |I_m|)
\]
As before, we estimate $|I_m|$ from below.

\begin{lem}\label{lem:boundImTake2}
On the event $\{ \tau_1 = m \}$, we have the estimate $|I_m| \geq \min \{ L^{-1 - ( \frac12 + \b) k - \b}, \e\}$.
\end{lem}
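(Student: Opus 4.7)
The proof closely parallels that of Lemma~\ref{lem:boundIm}, with extra care at the base case since at time $0$ there is no prior visit to $\Bc^k$ to exploit. I would split on whether $I_0 \cap \Bc^k_{\omega_0}$ is empty.

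If $I_0 \cap \Bc^k_{\omega_0} \neq \emptyset$, the construction gives $I_1 = \tilde X_1 + [-\e, \e]$. Since the upper bound \eqref{boundEpsilon} on $\e$ forces $I_0 + \omega_0$ into a single connected component of $B(-(k+\b)/2)$, Lemma~\ref{lem:boundPeriodCoherence} implies that $\tilde X_i$ shadows a postcritical iterate $f^i \hat x$ within distance $L^{-\b/2}$ for $1 \leq i \leq k$; by $(H3)_{c,k}$ these lie at distance $\geq c$ from $C_\psi'$, so $I_1, \ldots, I_k \subset \Gc$ with no cuts occurring. In particular $\tau_1 \geq k+1$, and for $m \geq k+1$ the case analysis of Lemma~\ref{lem:boundIm} applies verbatim with $\tau_{j-1}$ replaced by $0$, yielding the stronger bound $|I_m| \geq \min\{L^{-1-(\frac12+\b)k-\b}, L^{k(1-\b)}\e\}$, which implies the desired bound since $L^{k(1-\b)} \geq 1$.

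If $I_0 \cap \Bc^k_{\omega_0} = \emptyset$, then $I_1$ is the atom of $\Pc_{\omega_1}(\tilde f(I_0))$ containing $\tilde X_1$. For $m \geq 2$ I would adapt the case~(b) analysis of Lemma~\ref{lem:boundIm}: setting $i^{*} := \max\{1 \leq i \leq m : I_i \subsetneq \tilde f_{\omega_{i-1}}(I_{i-1})\}$ (now permitting $i^{*}=1$) and running the same sub-case split on $p_{\omega_{i^{*}}}(I_{i^{*}}) \in \{0,\ldots,k\}$ yields $|I_m| \geq L^{-1-(\frac12+\b)k-\b}$, via the fact that any cut atom contains a full $\Pc_{\omega_{i^{*}}}$-atom (Remark~\ref{rmk:smallAtoms}). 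For $m=1$, either $\Pc_{\omega_1}(\tilde f(I_0))$ has at least four atoms of positive length, in which case $I_1$ contains a full $\Pc_{\omega_1}$-atom and the same bound applies, or $I_1 = \tilde f(I_0)$, which is the main obstacle. In the latter, I would use quadratic scaling: the hypothesis $I_0 \cap \Bc^k_{\omega_0} = \emptyset$ forces $d(I_0 + \omega_0, C_\psi')$ to be at least of order $L^{-k/2-\b}$, so $|\tilde f'| \geq \tfrac12 L^{1-k/2-\b}$ on $I_0+\omega_0$ and $|I_1| = |\tilde f(I_0)| \geq \e L^{1-k/2-\b}$; comparing against $\min\{L^{-1-(\frac12+\b)k-\b}, \e\}$ and invoking the strengthened lower bound $\e \geq L^{-\frac{1-\b}{1+\b}k - (1-\b)(k+1)+\a}$ from the reductions of Section~6 completes the proof.
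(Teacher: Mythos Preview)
Your proposal has a genuine gap in the second case ($I_0 \cap \Bc^k_{\omega_0} = \emptyset$). For $m \geq 2$ you only treat the situation where at least one cut occurs, so that $i^* := \max\{1 \leq i \leq m : I_i \subsetneq \tilde f_{\omega_{i-1}}(I_{i-1})\}$ is well-defined. If no cut occurs at all, this $i^*$ does not exist and your argument says nothing. This no-cut scenario is not vacuous: for $\e$ near the bottom of its allowed range and moderate $m$, the intervals $I_0, I_1, \ldots, I_m$ can all be shorter than the smallest $\Pc$-atom and hence never be subdivided.

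Your separate treatment of $m = 1$, sub-case (ii), has a related defect: the bound $|I_1| \geq \e L^{1 - k/2 - \b}$ is simply too weak. Take $k = 2$ and $\a$ small; then $\e \approx L^{-5}$, the target $\min\{L^{-2 - 3\b}, \e\}$ equals $\e$, but your estimate gives only $|I_1| \gtrsim \e L^{-\b} < \e$. Invoking the lower bound on $\e$ does not rescue this, since the inequality needed is $\e \geq L^{-2 - \b k}$, which fails throughout the admissible range of $\e$ for any $k \geq 2$.

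The paper handles all of this with one stroke. It keeps the same case (a)/(b) dichotomy as in Lemma~\ref{lem:boundIm}, where (a) means \emph{no cuts at all}. In case (a), since $\tau_1 = m$ forces $I_m \cap \Bc^k_{\omega_m} \neq \emptyset$, and such an intersection is impossible during a bound period (Lemma~\ref{lem:boundPeriodCoherence} and Remark~\ref{rmk:extendBoundPeriods}), the interval $I_m$ is free at time $m$. Lemma~\ref{lem:fullBoundPer}(a) then gives $|(f^m_{\uo})'| \geq L^{m(\frac12 - \b)}$ on the uncut starting interval of length $2\e$, hence $|I_m| \geq L^{m(\frac12 - \b)} \cdot 2\e \geq \e$ for every $m \geq 1$. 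This single observation covers your missing $m \geq 2$ no-cut case, replaces your quadratic-scaling computation for $m = 1$, and makes the preliminary split on $I_0 \cap \Bc^k_{\omega_0}$ unnecessary. Case (b) then proceeds verbatim as in Lemma~\ref{lem:boundIm} and yields $|I_m| \geq L^{-1 - (\frac12 + \b)k - \b}$, exactly as you outlined.
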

Assuming this, we easily obtain
\[
\E(T_m | \Hc_m) \geq (1 + \b) \log (L^{1 - \b} \min \{ L^{-1 - ( \frac12 + \b) k - \b}, \e\} ) \geq - 2 (2 k + 1) \log L \, ,
\]
as claimed. It remains to prove Lemma \ref{lem:boundImTake2}.
\end{proof}

\begin{proof}[Proof of Lemma \ref{lem:boundImTake2}]
Condition on $\tau_1 = m$. 
The proof is very much parallel to that of Lemma \ref{lem:boundIm}. Case (b) 
can be repeated verbatim, and
yields the identical estimate $|I_m| \geq L^{-1 - (\frac12 + \b) k - \b}$.

The only difference is in case (a). Here, we observe that $I_m$ must be free,
and so (Lemma \ref{lem:fullBoundPer}(a)) we have
\[
|I_m| \geq L^{m (\frac12 - \b)} \cdot 2 \e \geq \e \, .
\]
This completes the proof of Lemma \ref{lem:boundImTake2}.
\end{proof}


%

\bibliography{biblio}
\bibliographystyle{plain}

\end{document}